
\documentclass[10pt]{amsart}
\setcounter{tocdepth}{2}
\setcounter{secnumdepth}{3}
\widowpenalty	= 4000
\clubpenalty	= 4000
\tolerance		= 1000


\usepackage[USenglish]{babel}
\usepackage{amsmath, amsfonts, amssymb, amsthm, amscd}
\usepackage{bbm, color, csquotes, dsfont, enumitem, epsf, graphicx, indentfirst, latexsym, mathtools, rotating, scalerel, tikz, tikz-cd, verbatim}
\usepackage[final, breaklinks=true, colorlinks, citecolor=green, linkcolor=blue, pdfusetitle]{hyperref}

\usepackage[T1]{fontenc}

\usepackage[abbrev]{amsrefs}


\usepackage{lmodern}


\newcommand{\bm}[1]{\boldsymbol{#1}}


\newtheorem{theorem}{Theorem}[section]

\newtheorem{lemma}[theorem]{Lemma}
\newtheorem{proposition}[theorem]{Proposition}
\newtheorem{corollary}[theorem]{Corollary}

\theoremstyle{definition}
\newtheorem{definition}[theorem]{Definition}
\newtheorem{notation}[theorem]{Notation}

\theoremstyle{remark}
\newtheorem{remark}[theorem]{Remark}

\numberwithin{equation}{section}



\newcommand{\R}{\mathbb{R}}
\newcommand{\Z}{\mathbb{Z}}

\newcommand{\Q}{\mathbb{Q}}


\newcommand{\ep}{\varepsilon}

\newcommand{\g}{\gamma}



\newcommand{\cB}{{\mathcal B}}

\newcommand{\cM}{{\mathcal M}}

\newcommand{\cO}{{\mathcal O}}
\newcommand{\cP}{{\mathcal P}}

\newcommand{\cS}{{\mathcal S}}

\newcommand{\cU}{{\mathcal U}}
\newcommand{\cV}{{\mathcal V}}
\newcommand{\cW}{{\mathcal W}}
\newcommand{\cX}{{\mathcal X}}
\newcommand{\cY}{{\mathcal Y}}
\newcommand{\cZ}{{\mathcal Z}}





\newcommand{\CM}{\overline{\mathcal{M}}}

\newcommand{\ww}{\omega}

\newcommand{\delbar}{\overline{\partial}}

\newcommand{\delbarinv}{\smash{\overline{\partial}}\vphantom{\partial}^{-1}}

\newcommand{\hdelbar}{
	\mathchoice
	{
		\hat{\overline{\partial}}
	}
	{
		\scalerel*{\hat{\overline{\partial}}}{\hat{M}}
	}
	{}{}
}

\newcommand{\abs}[1]{\lvert#1\rvert}
\newcommand{\norm}[1]{\left\Vert#1\right\Vert}

\DeclareMathOperator{\codim}{codim}
\DeclareMathOperator{\rank}{rank}
\DeclareMathOperator{\dR}{dR}


\DeclareMathOperator{\GW}{GW}
\DeclareMathOperator{\PD}{PD}


\newcommand{\ssc}{\text{sc}}

\newcommand{\dmlog}{\smash{\overline{\mathcal{M}}}\vphantom{\mathcal{M}}^{\text{log}}}

\newcommand{\supp}{\operatorname{supp}}
\newcommand{\id}{\operatorname{id}}
\newcommand{\sign}{\text{sgn}}
\newcommand{\pr}{\operatorname{pr}}

\hyphenation{mar-gin-al-ia branch-ed mani-fold mani-folds poly-fold poly-folds orbi-fold orbi-folds mul-ti-sec-tion mul-ti-sec-tions pa-ra-me-trize pa-ra-me-triz-ed weight-ed in-vari-ant in-vari-ants}

\begin{document}


\title[The {S}teenrod problem and polyfold invariants]{The {S}teenrod problem for orbifolds \\ and polyfold invariants as intersection numbers}
\author{Wolfgang Schmaltz}
\date{\today}
\subjclass[2020]{Primary 55N32, 57R18, 53D30, 53D45}
\thanks{Research partially supported by Project C5 of SFB/TRR 191 ``Symplectic Structures in Geometry, Algebra and Dynamics,'' funded by the DFG}

\address{Faculty of Mathematics, Ruhr-Universit{\"a}t Bochum, 44801 Bochum, Germany}
\email{\href{mailto:wolfgang.schmaltz@rub.de}{wolfgang.schmaltz@rub.de}}
\urladdr{\url{https://sites.google.com/view/wolfgang-schmaltz/home}}


\begin{abstract}
	The Steenrod problem for closed orientable manifolds was solved completely by Thom.
	Following this approach, we solve the Steenrod problem for closed orientable orbifolds, proving that the rational homology groups of a closed orientable orbifold have a basis consisting of classes represented by suborbifolds whose normal bundles have fiberwise trivial isotropy action.
	
	Polyfold theory, as developed by Hofer, Wysocki, and Zehnder, has yielded a well-defined Gromov--Witten invariant via the regularization of moduli spaces.
	As an application, we demonstrate that the polyfold Gromov--Witten invariants, originally defined via branched integrals, may equivalently be defined as intersection numbers against a basis of representing suborbifolds.
\end{abstract}

\maketitle

\tableofcontents

\section{Introduction}

\subsection[The Steenrod problem]{The {S}teenrod problem}

The Steenrod problem was first presented in \cite{eilenberg1949problems} and asked the following question:
\textit{Can any homology class of a finite polyhedron be represented as an image of the fundamental class of some manifold?}
In \cite{thom1954quelques},\footnote{The reader should be advised that the commonly available English translation of this paper introduces a few errors which are \textit{not} present in the original version.} Thom conclusively answered this problem, completely solving it for closed orientable manifolds.
\begin{theorem}[{\cite[Thm.~II.1]{thom1954quelques}}]\label{thm:thom}
	The rational homology groups of a closed orientable manifold have a basis consisting of classes represented by closed embedded submanifolds.
\end{theorem}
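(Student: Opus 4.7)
The approach follows Thom's original strategy via the Pontrjagin--Thom construction. Let $M^n$ be a closed oriented manifold. Since any spanning set of $H_k(M;\Q)$ contains a basis, it suffices to show: for every class $\alpha \in H_k(M;\Q)$, there exist a nonzero integer $N$ and a closed oriented embedded submanifold $Y^k \subset M$ with $[Y] = N\alpha$. Via Poincaré duality, setting $\beta = \PD(\alpha) \in H^{n-k}(M;\Q)$, the task becomes: find $Y$ with $\PD([Y]) = N\beta$.

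The first step is to produce submanifolds via Thom's construction. A smooth map $\varphi \colon M \to MSO(n-k)$ that is transverse to the zero section $BSO(n-k) \subset MSO(n-k)$ yields a closed oriented embedded submanifold $Y := \varphi^{-1}(BSO(n-k))$ of codimension $n-k$, and $\PD([Y]) = \varphi^* U$, where $U \in \tilde H^{n-k}(MSO(n-k);\Z)$ is the universal Thom class. It therefore suffices to show that for every $\beta \in H^{n-k}(M;\Q)$ there exist a nonzero integer $N$ and a continuous map $\varphi$ with $\varphi^* U = N\beta$; a standard smoothing of $\varphi$ on a finite skeleton containing its image, followed by a Sard-type perturbation, then delivers a smooth transverse representative without changing the homotopy class.

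The crux is the rational realizability statement: the pullback map $[M, MSO(n-k)] \otimes \Q \to H^{n-k}(M;\Q)$, $[\varphi] \mapsto \varphi^* U$, is surjective. This is the hard part of the proof and is the main obstacle. It rests on Thom's computation that the rational oriented bordism ring $\Omega^{SO}_* \otimes \Q$ is a polynomial algebra on the classes $[\CP^{2i}]$, which forces the Thom spectrum $MSO$ to split rationally as a wedge of shifted Eilenberg--MacLane spectra $K(\Q,\ast)$, with the universal Thom class corresponding to the fundamental class of one of these summands. Equivalently, the rational Hurewicz map from $MSO$-cohomology to ordinary rational cohomology is surjective, so every $\beta$ is, up to clearing denominators, a pullback $\varphi^* U$. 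Once this rational splitting is in hand, applying the submanifold-producing argument to each element of a basis of $H_k(M;\Q)$ yields a spanning set of classes represented by closed embedded submanifolds, from which a basis can be extracted.
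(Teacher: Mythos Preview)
Your argument is correct, and its overall architecture---reduce via Poincar\'e duality, map to a Thom space, take a transverse preimage of the zero section, and identify the Poincar\'e dual of the resulting submanifold with the pullback of the Thom class---matches exactly the route the paper takes (the paper does not prove Theorem~\ref{thm:thom} directly, but its proof of the orbifold generalization, Theorem~\ref{thm:steenrod-problem}, specializes to this case).

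The genuine difference lies in how you justify the key rational realizability step, i.e., that some nonzero multiple $N\beta$ is of the form $\varphi^*U$. The paper, following Thom, isolates this as Theorem~\ref{thm:thom-em-space} (Thom's Thm.~II.25) and describes its proof as obstruction theory: one constructs a map from a skeleton of $K(\Z,q)$ to $Th(q)$ hitting the fundamental class, the obstructions lying in cohomology with coefficients in groups that Serre's mod~$\mathcal C$ theory shows to be finite, so they vanish after multiplication by an integer $N$. You instead invoke the computation of $\Omega^{SO}_*\otimes\Q$ and the resulting rational splitting of $MSO$ into Eilenberg--MacLane spectra. Both routes ultimately rest on Serre's rational homotopy results, but yours is the heavier hammer: it imports the full structure of the oriented bordism ring (established by Thom later in the same paper, and with more work) to deduce a statement that only needs the relevant obstruction groups to be torsion. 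The obstruction-theoretic argument is more economical and is what the paper actually uses; your spectrum-level argument is more conceptual from a modern viewpoint but somewhat anachronistic, and the polynomial structure of $\Omega^{SO}_*\otimes\Q$ is more than you need---any connective spectrum splits rationally as a wedge of Eilenberg--MacLane spectra, so the only point to check is that the Thom class picks out the bottom summand.
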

A key insight into the problem was the multiplication theorem \cite[Thm.~II.25]{thom1954quelques} which demonstrated a cohomological relationship between two different types of classifying spaces---the Eilenberg--MacLane spaces and the universal bundles.
For solving this problem, and for his related work inventing cobordism theory, Thom was awarded the Fields medal in 1958.


In order to solve the Steenrod problem for orbifolds, for the most part we are able to follow the same approach as Thom.
Following this approach requires updating and reformulating analogs of a few classical results of the differential topological study of manifolds into the modern language of ep-groupoids.

\begin{theorem}[The {S}teenrod problem for closed orientable orbifolds]\label{thm:intro-steenrod-problem}
	The rational homology groups of a closed orientable orbifold have a basis consisting of classes represented by closed embedded full suborbifolds whose normal bundles have fiberwise trivial isotropy action.
\end{theorem}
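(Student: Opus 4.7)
The plan is to adapt Thom's original strategy to the ep-groupoid framework. First, I would reformulate Theorem~\ref{thm:intro-steenrod-problem} as the surjectivity of a fundamental-class map
\[
\mu : \Omega_k^{\SO}(X) \otimes \Q \to H_k(X;\Q),
\]
where $\Omega_k^{\SO}(X)$ is the oriented orbifold bordism group whose cycles are closed oriented $k$-dimensional orbifolds embedded as full suborbifolds with fiberwise trivially isotropic normal bundles. Once this surjectivity is in hand, the basis statement follows by a standard dimension-counting extraction from a rational generating set.

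Second, I would establish an orbifold Thom--Pontryagin correspondence at the level of ep-groupoids. A full suborbifold $Z \hookrightarrow X$ of codimension $d$ whose normal bundle has fiberwise trivial isotropy action admits an $\SO(d)$-frame bundle that is an \emph{effective} orbifold bundle, so its classifying map factors through the ordinary manifold classifying space $B\SO(d)$; collapsing the complement of a tubular neighborhood then yields a morphism of ep-groupoids $X \to M\SO(d)$ into the classical manifold Thom space. Conversely, any such morphism transverse to the zero section yields a full suborbifold of the desired type as its preimage---here the fiberwise trivial isotropy action is automatic because the target is a manifold, so any isotropy group at a preimage point must act trivially on the target tangent space and hence on the normal fiber. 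This correspondence reduces the problem to producing sufficiently many maps $X \to M\SO(d)$ and making them transverse.

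The homotopy-theoretic input is Thom's multiplication theorem applied to ordinary spaces: rationally $M\SO$ is equivalent to a product of Eilenberg--MacLane spectra, so any map $X \to K(\Z, n-k)$ lifts, after multiplication by a nonzero integer, through a map $X \to M\SO(d)$ for appropriate $d$. Poincar\'e duality for the closed oriented orbifold $X$ converts a rational homology class $\alpha \in H_k(X;\Q)$ into a cohomology class $\PD(\alpha) \in H^{n-k}(X;\Q)$, represented by a homotopy class into $K(\Z,n-k)$. Perturbing a chosen lift $X \to M\SO(d)$ within its ep-groupoid homotopy class to be transverse to the zero section and applying the correspondence of the preceding step recovers $\alpha$ (up to a nonzero rational multiple) as the image of the fundamental class of an embedded suborbifold.

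The main technical obstacle I expect lies in the transversality step. The classical differential-topological ingredients---transversality of smooth maps, existence and uniqueness of equivariant tubular neighborhoods, and genericity of perturbations---must be reformulated at the level of ep-groupoids, and the perturbations must be controlled so that not only is the preimage smooth but its normal bundle inherits the fiberwise trivial isotropy action. Producing these ep-groupoid analogues of Thom's classical lemmas---and checking that they interact correctly with the Thom--Pontryagin collapse---is where the substantive mathematical work lies; once they are in place, the purely topological multiplication theorem combines with the transversality of perturbed maps into $M\SO(d)$ to produce the required representing suborbifolds.
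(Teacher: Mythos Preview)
Your proposal is correct and follows essentially the same route as the paper: Poincar\'e-dualize, invoke Thom's multiplication theorem to lift into a Thom space, make things transverse to the base, and take the preimage (with the fiberwise trivial isotropy of the normal bundle coming for free because the target is a manifold). The only tactical difference worth flagging is how transversality is achieved: rather than perturbing the map $X \to M\SO(d)$ within its homotopy class, the paper (i) Whitney-approximates the restricted map $\cO \setminus f^{-1}(\infty) \to D_\ep$ by a smooth functor and then (ii) perturbs the \emph{target submanifold} $B_k \subset D_\ep$---since the target is an honest manifold, this sidesteps any need for equivariant or multivalued perturbations on the orbifold side, which is exactly the simplification you anticipated needing.
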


Embedded full suborbifolds whose normal bundles have fiberwise trivial isotropy action are well-suited for general intersection theories.
Given such a suborbifold, the underlying topological space of the normal bundle is a vector bundle over the underlying topological space of the suborbifold.
In contrast, the underlying topological space of an arbitrary orbifold bundle will generally not be a vector bundle.
This means it is possible to use single valued sections (as opposed to multisections) for arguments involving perturbations.

We consider the Steenrod problem for orbifolds and give a self-contained introduction to some basic orbifold theory in \S~\ref{sec:steenrod-problem}.
In \S~\ref{subsec:orbifolds-ep-groupoids} we describe orbifolds using the modern language of ep-groupoids, prove a Whitney approximation theorem, and define the full embedded suborbifolds.
In \S~\ref{subsec:elementary-orbifold-alg-top} we compare cohomology theories on orbifolds, and discuss Poincar\'e duality.
In \S~\ref{subsec:elementary-orbifold-transversality} we provide some relevant transversality results.
In \S~\ref{subsec:solving-the-steenrod-problem} we solve the Steenrod problem for orbifolds.
The proof of Theorem~\ref{thm:intro-steenrod-problem} follows immediately from the more complete and technical Theorem~\ref{thm:steenrod-problem}.


\subsection{Polyfold invariants}

A foundational problem in symplectic geometry is obtaining well-defined invariants via the compactified moduli spaces that arise in the study of $J$-holomorphic curves.
To do this, we would like to think of a compactified moduli space as a space which possesses structure similar to that of a manifold, structure such as:  possession of a ``fundamental class,'' or the ability to integrate differential forms, or the ability to define notions of transversal intersection and intersection number.
However, for general symplectic manifolds, compactified moduli spaces will only have the structure of metrizable compact topological spaces---and this is insufficient for obtaining invariants.

Polyfold theory, developed by Hofer, Wysocki, and Zehnder, is a relatively new approach to solving this problem, and has been successful in proving the following regularization theorem.
\begin{theorem}[Polyfold regularization theorem, {\cite[Thm.~15.4, Cor.~15.1]{HWZbook}}]
	Consider a compact topological space $\CM$ which is equal to the zero set of a $\ssc$-smooth Fredholm section $\delbar$ of a strong polyfold bundle $\cW \to \cZ$, i.e., $\CM= \delbarinv (0) \subset \cZ$.
	
	There exist a class of ``abstract perturbations'' $p$ such that the perturbed zero set $(\delbar + p)^{-1}(0)$ has the structure of a compact oriented ``weighted branched orbifold.''
	
	Furthermore, different choices of abstract perturbations result in perturbed zero sets which are cobordant:
	given abstract perturbations $p_1$, $p_2$ there exists a compact oriented weighted branched orbifold $\cB$ with boundary:
		\[
		\partial \cB = -(\delbar+p_1)^{-1}(0) \sqcup (\delbar+p_2)^{-1}(0).
		\]
\end{theorem}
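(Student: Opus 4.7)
The plan is to proceed in three main stages: establish a local normal form for the unperturbed zero set via the $\ssc$-Fredholm property, construct sufficiently many generic abstract perturbations to achieve transversality while maintaining compactness, and then apply the same construction to a parametrized family to obtain the cobordism between different choices of perturbation.

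For the first two stages, I would invoke the $\ssc$-smooth implicit function theorem for Fredholm sections of strong polyfold bundles. Near each $x \in \CM$, after passing to a local uniformizer (an ep-groupoid chart), the Fredholm linearization splits off a finite-dimensional kernel and cokernel, and the zero set is locally cut out equivariantly by a finite-dimensional nonlinear map. To cure the possible failure of surjectivity, I would introduce $\ssc^+$-multisection perturbations $p$ supported in a neighborhood of $\CM$, built by partition of unity from finitely many local $\ssc^+$-sections whose values at chosen control points span the relevant cokernels. A Sard--Smale argument in the $\ssc^+$ setting then produces generic $p$ for which $\delbar + p$ is transverse to the zero section branch-by-branch; the resulting local zero sets inherit rational weights from the multisection data and assemble into the weighted branched orbifold structure. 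Orientations are transported from the determinant line bundle of the Fredholm linearization, using that $\ssc^+$-perturbations are compact relative to the $\ssc$-Fredholm index class. Crucially, $p$ must be chosen small in a suitable auxiliary norm so that the zero set of $\delbar + p$ remains inside a compact neighborhood of $\CM$, which relies on the built-in compactness properties of the polyfold setup.

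For the third stage, given two perturbations $p_1, p_2$ as above, I would work on the parametrized polyfold $\cZ \times [0,1]$ with pulled back bundle, and consider the family $\delbar + (1-t)p_1 + t p_2 + q$, where $q$ is a small generic $\ssc^+$-perturbation that vanishes at $t=0,1$ and achieves transversality of the total parametrized section. Repeating the local structure, orientation, and compactness arguments in this parametrized setting produces the compact oriented weighted branched orbifold $\cB$ whose boundary realizes the asserted cobordism with the correct signs. The main obstacle throughout is reconciling three competing constraints on the perturbation class: perturbations must be rich enough to achieve generic transversality (forcing local surjectivity onto cokernels), rigid enough that the perturbed section remains $\ssc$-Fredholm of the same index (forcing $\ssc^+$-regularity), and small enough that compactness of the zero set is preserved. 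Making all three compatible simultaneously is where essentially all of the technical weight of the polyfold formalism resides, and is precisely the content developed across the cited chapters of \cite{HWZbook}.
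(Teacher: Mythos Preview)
This theorem is not proved in the paper. It appears in the introduction with an explicit citation to \cite[Thm.~15.4, Cor.~15.1]{HWZbook} and functions purely as background for the polyfold framework; the paper supplies no argument for it. There is therefore no proof in the paper against which to compare your proposal.

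For what it is worth, your outline tracks the architecture of the HWZ argument at a high level: local Fredholm normal forms in ep-groupoid charts, $\ssc^+$-multisection perturbations built from finitely many local sections spanning cokernels, a Sard-type selection of regular parameters, control of compactness via an auxiliary norm and a controlling pair, orientations from the determinant line, and a parametrized version over $[0,1]$ for the cobordism. But since the present paper simply imports the result, there is nothing here to compare against.
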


A {manifold} is locally homeomorphic to an open subset of $\R^n$, while an {orbifold} is locally homeomorphic to the quotient of an open subset of $\R^n$ by a finite group action.
Similarly, a {branched orbifold} is locally homeomorphic to the quotient of a finite union of open subsets of $\R^n$ by a finite group action.
The term ``branched orbifold'' refers to the smooth structures of the underlying topological space, while the ``weighted'' adjective refers to additional data used to define invariants such as the branched integral.
Crucially, weighted branched orbifolds possess enough structure to define the ``branched integration'' of differential forms.

\begin{theorem}[Polyfold invariants as branched integrals, {\cite[Cor.~15.2]{HWZbook}}]
	Consider a $\ssc$-smooth map 
		$f:\cZ \to \cO$
	from a polyfold $\cZ$ to an orbifold $\cO$.
	We may define the \textbf{polyfold invariant} as the homomorphism obtained by pulling back a de Rahm cohomology class from the orbifold and taking the branched integral over a perturbed zero set:
		\[
		H^*_{\dR} (\cO) \to \R,	\qquad \ww \mapsto \int_{\cS(p)} f^*\ww.
		\]
	By a version of Stokes' theorem~\ref{thm:stokes} for weighted branched orbifolds, this homomorphism does not depend on the choice of abstract perturbation used to obtain the weighted branched orbifold $\cS(p)$.
\end{theorem}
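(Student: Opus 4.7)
The plan is to reduce the statement to two applications of Stokes' theorem (Theorem~\ref{thm:stokes}) for weighted branched orbifolds: one showing that the integral only depends on the de Rham class of $\ww$, and another showing that it only depends on the cobordism class of the perturbed zero set. The essential inputs are naturality of pullback with respect to $d$, the boundary formula $\partial \cB = -(\delbar+p_1)^{-1}(0) \sqcup (\delbar+p_2)^{-1}(0)$ from the polyfold regularization theorem, and the fact that $\cS(p)$ is itself a compact oriented weighted branched orbifold \emph{without} boundary.

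First I would check that, for a fixed perturbation $p$, the prescription $\ww \mapsto \int_{\cS(p)} f^*\ww$ descends to a homomorphism on $H^*_{\dR}(\cO)$. If $\ww = d\eta$ is exact on $\cO$, then $f^*\ww = f^*(d\eta) = d(f^*\eta)$ is $\ssc$-smooth and exact, and Stokes' theorem applied to the boundaryless $\cS(p)$ gives
\[
\int_{\cS(p)} f^*\ww \;=\; \int_{\cS(p)} d(f^*\eta) \;=\; \int_{\partial \cS(p)} f^*\eta \;=\; 0.
\]
Thus the integral vanishes on exact forms and the map factors through de Rham cohomology.

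Next, for two choices of abstract perturbation $p_1, p_2$, I would invoke the regularization theorem to obtain the compact oriented weighted branched cobordism $\cB$ with $\partial \cB = -(\delbar+p_1)^{-1}(0) \sqcup (\delbar+p_2)^{-1}(0)$. Since $f$ is $\ssc$-smooth and $\ww$ is closed, the pullback $f^*\ww$ is a closed $\ssc$-smooth form, and another application of Stokes' theorem on $\cB$ yields
\[
0 \;=\; \int_\cB d(f^*\ww) \;=\; \int_{\partial \cB} f^*\ww \;=\; \int_{\cS(p_2)} f^*\ww \;-\; \int_{\cS(p_1)} f^*\ww,
\]
which is exactly the desired independence of perturbation.

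The main obstacle is not the algebraic manipulation above but rather verifying that the hypotheses of the branched-orbifold Stokes' theorem are met by $f^*\ww$ on $\cB$. One must confirm that $\ssc$-smoothness of $f$ together with smoothness of $\ww$ produces a form belonging to the class for which the branched integral is defined, that the cobordism respects orientations and branch weights in the way the boundary formula asserts, and that the exterior derivative and the pullback interact correctly with the local multisection structure. Once Theorem~\ref{thm:stokes} has been established in the form quoted, the well-definedness of the polyfold invariant is a direct corollary of the two Stokes computations displayed above.
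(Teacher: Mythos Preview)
Your proposal is correct and matches the paper's own treatment: the paper does not spell out a detailed proof but simply states (after recording Stokes' theorem~\ref{thm:stokes}) that ``using Stokes' theorem~\ref{thm:stokes}, we see that this homomorphism does not depend on the choice of abstract perturbation,'' which is exactly the two-step Stokes argument you wrote out. Your caveat about verifying the hypotheses of the branched Stokes theorem for $f^*\ww$ on the cobordism $\cB$ is appropriate, but the paper treats these as packaged into the cited results from \cite{HWZbook} and does not address them separately.
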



We prove that the polyfold invariants may equivalently be defined as intersection numbers.
Polyfold invariants take as input (co)homological data coming from a closed orientable orbifold.
The Steenrod problem for closed orientable orbifolds demonstrates how to take this homological data and realize it as the fundamental class of a full embedded suborbifold---which is suitable data for defining the polyfold invariants as intersection numbers.

Consider a weighted branched suborbifold $\cS$, an orbifold $\cO$, an embedded full suborbifold $\cX\subset \cO$ whose normal bundle has fiberwise trivial isotropy action, and a smooth map $f :\cS \to \cO$:
	\[
	\begin{tikzcd}[row sep=tiny]
	\cS \arrow[r,"f"] & \cO \arrow[d,phantom,"\cup"] \\
	& \cX.
	\end{tikzcd}
	\]
Weighted branched orbifolds possess oriented tangent spaces, and hence it is possible to formulate a notion of transversal intersection of the map $f$ with the suborbifold $\cX$.
Furthermore, transversal intersection is generic, and may be obtained via either of the following approaches:
\begin{itemize}
	\item through perturbation of the suborbifold $\cX$ (see Proposition~\ref{prop:transversality-perturbation-full-suborbifold}),
	\item through construction of an abstract perturbation (see Proposition~\ref{prop:transversality-regular-perturbation}).
\end{itemize}
(The second approach requires the additional hypothesis that the map $f$ extended to the ambient polyfold is a submersion.)

\begin{theorem}[Polyfold invariants as intersection numbers]\label{thm:intro-polyfold-invariants-intersection}
	Consider a compact oriented weighted branched suborbifold $\cS$, an oriented orbifold $\cO$, a closed embedded full suborbifold $\cX\subset \cO$ whose normal bundle has fiberwise trivial isotropy action, and a smooth map $f :\cS \to \cO$.
	
	Without loss of generality, we may assume that $f$ is transverse to $\cX$.
	There exists a well-defined \textbf{intersection number}:
		\[
		f|_{\cS} \cdot \cX.
		\]
	
	The Steenrod problem for orbifolds \ref{thm:steenrod-problem} guarantees the existence of a basis $\{[\cX_i]\}$ of $H_*(\cO;\Q)$ which consists of the fundamental classes of closed embedded full suborbifolds whose normal bundles have fiberwise trivial isotropy action.
	We may define the \textbf{polyfold invariant} as the homomorphism obtained by evaluating the intersection number on this basis of representing suborbifolds and linear extension:
		\[
		H_*(\cO;\Q) \to \Q, \qquad	\sum_i k_i[\cX_i] \mapsto \sum_i k_i \left( f|_{\cS} \cdot \cX_i \right).
		\]
\end{theorem}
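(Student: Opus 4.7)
The plan is to construct the intersection number pointwise using the oriented tangent structure of weighted branched orbifolds, prove independence from the chosen perturbation via a cobordism argument, and then identify the resulting rational number with the branched integral by a Poincar\'e duality and Thom form computation.

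First I would fix a perturbation (either of $\cX$ via Proposition~\ref{prop:transversality-perturbation-full-suborbifold}, or an abstract perturbation of $\delbar$ via Proposition~\ref{prop:transversality-regular-perturbation}) so that, after possibly replacing $\cS$ by a perturbed zero set $\cS(p)$, the map $f$ is transverse to $\cX$ and the dimensions satisfy $\dim \cS + \dim \cX = \dim \cO$. Because $\cX$ is a full embedded suborbifold whose normal bundle has fiberwise trivial isotropy action, the preimage $f^{-1}(\cX)$ inherits the structure of a compact $0$-dimensional oriented weighted branched suborbifold of $\cS$. At each intersection point $x$, the orientations of $\cS$, $\cX$, and $\cO$ produce a sign $\ep(x) \in \{\pm 1\}$, and the local branching structure supplies a positive rational weight $w(x)$ together with isotropy order $|\G_x|$, and I would set
\[
f|_{\cS}\cdot\cX \;:=\; \sum_{x \in f^{-1}(\cX)} \frac{\ep(x)\, w(x)}{|\G_x|}.
\]

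The next step is to prove independence from all choices. For two transverse perturbations $p_1$ and $p_2$, the polyfold regularization theorem supplies a compact oriented weighted branched cobordism $\cB$ with $\partial \cB = -\cS(p_1) \sqcup \cS(p_2)$; a further small perturbation makes the map extended to $\cB$ transverse to $\cX$. The intersection $(f\circ \pr)^{-1}(\cX) \subset \cB$ is then a compact oriented weighted branched $1$-dimensional cobordism between the two signed weighted point counts, so the two counts agree by the branched analog of the classical oriented boundary argument. Independence from the choice of representative $\cX$ within a fixed homology class follows from an analogous isotopy construction, using Whitney approximation and the transversality results from \S~\ref{subsec:elementary-orbifold-transversality} to move $\cX$ within its class to a smooth representative transverse to $f$.

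To match with the branched integral definition of the polyfold invariant, I would invoke orbifold Poincar\'e duality from \S~\ref{subsec:elementary-orbifold-alg-top} together with a Thom form construction. The hypothesis that the normal bundle of $\cX$ has fiberwise trivial isotropy action is essential here: it ensures that the underlying topological space of the normal bundle is an honest vector bundle on which a compactly supported Thom form $\tau_\cX$ can be constructed by the standard procedure, and then pushed into $\cO$ via a tubular neighborhood to represent $\PD([\cX]) \in H^{\codim\cX}_{\dR}(\cO)$. Shrinking the support of $\tau_\cX$ and computing locally around each transverse intersection point using the branched change-of-variables formula would yield
\[
\int_{\cS(p)} f^*\tau_\cX \;=\; \sum_{x \in f^{-1}(\cX)} \frac{\ep(x)\, w(x)}{|\G_x|} \;=\; f|_{\cS}\cdot\cX,
\]
and linear extension along the Steenrod basis $\{[\cX_i]\}$ supplied by Theorem~\ref{thm:intro-steenrod-problem} gives the asserted homomorphism $H_*(\cO;\Q) \to \Q$, which therefore coincides with the branched integral definition on each basis element and hence everywhere.

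The hardest step I expect is the weighted and branched bookkeeping in the local model: one must verify that the rational number obtained by signed counting of branches at each transverse intersection point equals \emph{exactly}, not merely up to some combinatorial factor, the rational number obtained by branched integration of the Thom form. Everything else reduces to a combination of the transversality results from \S~\ref{subsec:elementary-orbifold-transversality}, the polyfold regularization and cobordism statement, and the branched Stokes' theorem~\ref{thm:stokes}.
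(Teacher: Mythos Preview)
Your overall strategy---achieve transversality, define a signed weighted count over isotropy, and identify it with the branched integral of a Thom form representing $\PD([\cX])$---is the same as the paper's. Two points deserve correction or comment.

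First, the isotropy normalization in your formula should be by the order of the \emph{effective} isotropy group $\sharp\bm{G}^{\text{eff}}(x)$, not the full isotropy $|\bm{G}_x|$; this is forced by the normalization in the branched integral (Theorem~\ref{def:branched-integral}) and is what makes the local Thom-form computation in \S\ref{subsec:equality-polyfold-invariants} come out with no spurious combinatorial factor. Relatedly, at a single object $x$ several local branches $M_i$ may pass through $x$ with distinct signs and weights, so the count is a sum over $\{i\in I\mid x\in M_i\}$ rather than a single term $\ep(x)w(x)$.

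Second, and more substantively, you propose to establish independence from the perturbation by a direct one-dimensional weighted branched cobordism argument. The paper explicitly declines to do this (Remark~\ref{rmk:invariance-intersection-number}): it notes that such an argument would require a structure theorem decomposing compact oriented one-dimensional weighted branched orbifolds into weighted arcs and circles, which is plausible but not proved there. Instead, the paper reverses your logical order: it first proves the equality $\int_{\cS} f^*\PD([\cX]) = f|_{\cS}\cdot\cX$ (your final step), and then \emph{deduces} invariance of the intersection number from the already-known invariance of the branched integral via Stokes' theorem. Since you are computing the Thom-form identity anyway, the paper's route is strictly shorter and avoids the unproved branched one-manifold classification; your direct cobordism step is not wrong in spirit, but it is both redundant and, as written, rests on a lemma the paper does not supply.
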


When $\dim \cS + \dim \cX = \dim \cO$, the intersection number is given by the signed weighted count of a finite number of points of intersection (see Definition~\ref{def:intersection-number}).

Provided with a well-defined branched integral and a well-defined intersection number, a proof of the equality of the invariants involves little more than using Poincar\'e duality for orbifolds and a local comparison of these invariants at the finite points of intersection.
We use this equality to justify the assertion that the intersection number is an invariant, and does not depend on the choice of abstract perturbation, nor on the choice of basis of representing suborbifolds (see Remark~\ref{rmk:invariance-intersection-number}).

\begin{theorem}[Equivalence of the polyfold invariants]\label{thm:equality-polyfold-invariants}
	Let $\cS$ be a compact oriented weighted branched suborbifold. Let $\cO$ be a closed oriented orbifold and let $\cX \subset \cO$ be a closed oriented embedded full suborbifold whose normal bundle has fiberwise trivial isotropy action.
	Consider a smooth map $f: \cS \to \cO$ and assume without loss of generality that $f$ is transverse to $\cX$.
	
	The branched integral and the intersection number are related by the following equation:
	\[
	\int_{\cS} f^* \PD ([\cX]) = f|_{\cS} \cdot \cX.
	\]
\end{theorem}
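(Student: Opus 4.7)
The plan is to prove the identity by constructing an explicit de Rham representative of $\PD([\cX])$ that localizes near $\cX$, pulling it back to $\cS$ via $f$, and showing that the branched integral reduces to the weighted signed count at the finite transverse intersection points. The degrees are compatible only when $\dim\cS + \dim\cX = \dim\cO$; otherwise both sides vanish trivially (the left side for degree reasons, the right side by definition), so I will assume this complementary dimension throughout.

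First I would construct a Thom form representative $\tau \in \Om^{\dim\cO - \dim\cX}(\cO)$ for $\PD([\cX])$. Since the normal bundle $\nu \to \cX$ has fiberwise trivial isotropy action, the results of \S\ref{subsec:orbifolds-ep-groupoids} imply that a tubular neighborhood of $\cX$ in $\cO$ exists whose underlying topological space is an honest vector bundle over the underlying space of $\cX$. This enables the construction of a closed form $\tau_\nu$ on the total space of $\nu$ which is fiberwise compactly supported and integrates to $1$ on each fiber. Extending by zero along the tubular neighborhood embedding yields $\tau \in \Om^*(\cO)$, and Poincar\'e duality for orbifolds (\S\ref{subsec:elementary-orbifold-alg-top}) identifies $[\tau] = \PD([\cX])$.

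Next I would localize the branched integral. By transversality and the dimension hypothesis, $f^{-1}(\cX) \subset \cS$ is a finite set; shrinking the tubular neighborhood of $\cX$ if needed, the support of $f^*\tau$ is contained in pairwise disjoint small neighborhoods $U_p$ of each $p \in f^{-1}(\cX)$. On each $U_p$, choose a branch chart: $\cS$ locally decomposes as a finite union of smooth sheets $U_p^{(i)}$ carrying weights $\s_i$, and the branched integral on $U_p$ is the weighted sum $\sum_i \s_i \int_{U_p^{(i)}} f^*\tau$ divided by the order of the isotropy at $p$. Transversality of $f$ on each sheet means that, in the local trivialization $\nu \cong T_{f(p)}\cX \times T_{f(p)}\cO/T_{f(p)}\cX$ of the tubular neighborhood, the composition of $f|_{U_p^{(i)}}$ with the fiber projection is a local diffeomorphism onto the normal fiber. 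Fubini and the defining fiber-integral property of $\tau_\nu$ then give $\int_{U_p^{(i)}} f^*\tau = \ep(p,i)$, where $\ep(p,i) = \pm 1$ records whether this local diffeomorphism preserves or reverses the orientations inherited from $\cS$, $\cX$, and $\cO$.

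Finally I would assemble these local contributions. Summing over intersection points, the branched integral $\int_\cS f^*\tau$ equals $\sum_{p} \frac{1}{|G_p|} \sum_i \s_i \ep(p,i)$, which is precisely the definition of the intersection number $f|_{\cS} \cdot \cX$ in Definition~\ref{def:intersection-number}. The main obstacle will be step three: carefully verifying that, in the ep-groupoid language used throughout the paper, the local branch/sheet decomposition of $\cS$ and the local trivialization of $\nu$ are compatible enough for the Fubini reduction to yield exactly $\ep(p,i) = \pm 1$ on each sheet, with the correct orientation sign relative to the conventions of Definition~\ref{def:intersection-number}. Once the local model is nailed down and the sign bookkeeping is confirmed to match, the global equality follows by summation.
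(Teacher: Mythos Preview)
Your proposal is correct and follows essentially the same route as the paper: represent $\PD([\cX])$ by the Thom class of the normal bundle (Proposition~\ref{prop:thom-class-poincare-dual}), use transversality to reduce $f^{-1}(\cX)$ to finitely many points (Lemma~\ref{lem:transverse-intersection}), localize the support of $f^*\tau$ near those points (Lemma~\ref{lem:neighborhood}), and then evaluate the branched integral branch-by-branch where each $\int_{(M_i,o_i)} \hat{f}^*\tau = \pm 1$. One small point to watch when you write it up: the normalization in the branched integral and in Definition~\ref{def:intersection-number} is by the order of the \emph{effective} isotropy group $\sharp\bm{G}^{\text{eff}}(x)$, not the full isotropy group, so make sure your $|G_p|$ is interpreted accordingly.
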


We show how the Steenrod problem for orbifolds may be used to define the polyfold invariants in terms of intersection numbers in \S~\ref{sec:defining-polyfold-invariants}.
In \S~\ref{subsec:weighted-branched-suborbifolds} we discuss the structure of the weighted branched suborbifolds which arise as perturbed solution spaces in polyfold theory.
In \S~\ref{subsec:branched-integrals} we review details regarding the branched integral and the associated polyfold invariants.
In \S~\ref{subsec:intersection-numbers} we show that transversality is generic and define the polyfold invariants as intersection numbers.
In \S~\ref{subsec:equality-polyfold-invariants} we prove Theorem~\ref{thm:equality-polyfold-invariants}, showing that the polyfold invariants are equivalent and establish that the intersection number is an invariant.


\subsection[Application: The polyfold Gromov--Witten invariants]{Application: The polyfold {G}romov--{W}itten invariants}

The earliest interpretations of the Gromov--Witten invariants present in the literature were given in terms of counting a finite number of curves \cites{mcduffsalamon2012jholomorphic, ruan1994symplectic, ruan1996topological}.
For example, Ruan described the Gromov--Witten invariants as a finite sum, counted with multiplicity, of nonmultiple cover $J$-spheres in $\cM^*_{(A,J)}$ which intersect representatives of given cycles in the symplectic manifold \cite[Thm.~A]{ruan1996topological}.

However, such definitions have previously been restricted to genus zero Gromov--Witten invariants in semipositive symplectic manifolds.
In addition to technical issues regarding the regularization of the Gromov--Witten moduli spaces for general symplectic manifolds and arbitrary genus, we remark that in the genus $0$ case the Grothendieck--Knudsen spaces $\dmlog_{0,k}$ are finite-dimensional manifolds.
In contrast, if genus $g >0$ the general Deligne--Mumford spaces $\dmlog_{g,k}$ are orbifolds.
Therefore, in the genus $0$ case extant methods---such as representing a homology class as a pseudocycle in a manifold \cite{zinger2008pseudocycles}  or, indeed, the Steenrod problem for manifolds---were sufficient to interpret the Gromov--Witten invariants via intersection theory.

Polyfold theory has been successful in giving a well-defined Gromov--Witten invariant for $J$-holomorphic curves of arbitrary genus, and for all closed symplectic manifolds \cite{HWZGW}. As opposed to defining these invariants via intersection theory, the invariants are defined by pulling back de Rahm cohomology classes and branched integration. We now describe this precisely.

Let $(Q,\ww)$ be a closed symplectic manifold, and fix a homology class $A \in H_2 (Q;\Z)$ and integers $g, k\geq 0$ such that $2g+k\geq 3$.
Consider the following diagram of smooth maps between the perturbed Gromov--Witten moduli space $\cS_{A,g,k}(p)$, the $k$-fold product manifold $Q^k$, and the Deligne--Mumford orbifold $\dmlog_{g,k}$:  
	\[
	\begin{tikzcd}[column sep=12ex]
	\cS_{A,g,k}(p) \arrow{r}{ev_1\times\cdots\times ev_k} \arrow{d}{\pi} & Q^k\\
	\dmlog_{g,k}.
	\end{tikzcd}
	\]
Here $ev_i$ is evaluation at the $i$th-marked point, and $\pi$ is the projection map to the Deligne--Mumford space which forgets the stable map solution and stabilizes the resulting nodal Riemann surface by contracting unstable components.

Consider homology classes $\alpha_1,\ldots, \alpha_k \in H_* (Q;\Q)$ and $\beta\in H_* (\dmlog_{g,k};\Q)$.
We can represent the Poincar\'e duals of the $\alpha_i$ and $\beta$ by closed differential forms in the de Rahm cohomology groups, $\PD(\alpha_i)\in H^*_{\dR} (Q)$ and $\PD(\beta)\in H^*_{\dR}(\dmlog_{g,k})$.
By pulling back via the evaluation and projection maps, we obtain a closed $\ssc$-smooth differential form 
	\[
	ev_1^* \PD (\alpha_1) \wedge \cdots \wedge ev_k^* \PD(\alpha_k) \wedge\pi^* \PD (\beta) \in H^*_{\dR} (\cZ_{A,g,k}).
	\]
The \textbf{polyfold Gromov--Witten invariant} is the homomorphism
	\[
	\GW^Q_{A,g,k} : H_* (Q;\Q)^{\otimes k} \otimes H_* (\dmlog_{g,k}; \Q) \to \R
	\]
defined in \cite[Thm.~1.12]{HWZGW} via the branched integral
	\[
	\GW^Q_{A,g,k} (\alpha_1,\ldots,\alpha_k;\beta) : = \int_{\cS_{A,g,k}(p)} ev_1^* \PD (\alpha_1) \wedge \cdots \wedge ev_k^* \PD(\alpha_k) \wedge\pi^* \PD (\beta).
	\] 

Via Theorems~\ref{thm:intro-polyfold-invariants-intersection} and \ref{thm:equality-polyfold-invariants}, we immediately obtain the following equivalent description of the polyfold Gromov--Witten invariants as an intersection number.

\begin{corollary}[Gromov--Witten invariants as intersection numbers]
	The polyfold Gromov--Witten invariant may equivalently be defined as the intersection number evaluated on a basis of representing submanifolds $\cX \subset Q$ and representing suborbifolds $\cB \subset \cO$:
		\[
		\GW^Q_{A,g,k} ([\cX_1],\ldots,[\cX_k];[\cB]) :=
		\left(ev_1\times\cdots\times ev_k\times\pi\right)|_{\cS_{A,g,k}(p)} \cdot \left(\cX_1 \times\cdots\times \cX_k \times \cB\right).
		\]
	The invariant does not depend on the choice of abstract perturbation, nor on the choice of representing basis.		
\end{corollary}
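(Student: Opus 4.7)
The plan is to package Theorems~\ref{thm:intro-polyfold-invariants-intersection} and \ref{thm:equality-polyfold-invariants} together with a K\"unneth decomposition of the homology of the target. First I would set $\cO := Q^k \times \dmlog_{g,k}$; this is a closed oriented orbifold since $Q$ is a closed oriented manifold and $\dmlog_{g,k}$ is a closed oriented orbifold. The combined smooth map $f := ev_1 \times \cdots \times ev_k \times \pi$ carries the compact oriented weighted branched suborbifold $\cS_{A,g,k}(p)$ into $\cO$, which is precisely the setup demanded by Theorem~\ref{thm:intro-polyfold-invariants-intersection}.

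Next I would construct the representing basis factor by factor. Thom's Theorem~\ref{thm:thom} supplies a basis of $H_*(Q;\Q)$ represented by closed embedded submanifolds $\cX_i \subset Q$---whose normal bundles have fiberwise trivial isotropy action tautologically, as $Q$ is a manifold---while the Steenrod problem for orbifolds, Theorem~\ref{thm:intro-steenrod-problem}, supplies a basis of $H_*(\dmlog_{g,k};\Q)$ represented by closed embedded full suborbifolds $\cB \subset \dmlog_{g,k}$ whose normal bundles have fiberwise trivial isotropy action. Via the K\"unneth isomorphism the exterior products $\cX_1 \times \cdots \times \cX_k \times \cB$ form a basis of $H_*(\cO;\Q) \cong H_*(Q;\Q)^{\otimes k} \otimes H_*(\dmlog_{g,k};\Q)$, and each such product is a closed embedded full suborbifold of $\cO$ whose normal bundle splits as the external direct sum of the factors' normal bundles and therefore also has fiberwise trivial isotropy action. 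Theorem~\ref{thm:intro-polyfold-invariants-intersection} then applies, yielding, after a generic perturbation achieving transversality, a well-defined intersection number $f|_{\cS_{A,g,k}(p)} \cdot (\cX_1 \times \cdots \times \cX_k \times \cB)$.

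Theorem~\ref{thm:equality-polyfold-invariants} identifies this intersection number with the branched integral $\int_{\cS_{A,g,k}(p)} f^* \PD([\cX_1 \times \cdots \times \cX_k \times \cB])$. Since the Poincar\'e dual of an external product of full suborbifolds is the wedge of the pullbacks of the Poincar\'e duals of the factors, and since $f$ is itself a product of maps, pullback distributes to give
\[
f^* \PD([\cX_1\times\cdots\times\cX_k\times\cB]) = ev_1^*\PD([\cX_1]) \wedge \cdots \wedge ev_k^*\PD([\cX_k]) \wedge \pi^*\PD([\cB]),
\]
which is precisely the integrand defining $\GW^Q_{A,g,k}([\cX_1],\ldots,[\cX_k];[\cB])$. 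The asserted equality on basis elements follows, and linear extension over the K\"unneth basis delivers the corollary; independence from the abstract perturbation and from the choice of representing basis is then inherited directly from Remark~\ref{rmk:invariance-intersection-number}.

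The only step requiring a little care, rather than a genuine obstacle, is verifying that the Poincar\'e duality for oriented orbifolds developed in \S~\ref{subsec:elementary-orbifold-alg-top} is multiplicative with respect to the mixed manifold--orbifold product $Q^k \times \dmlog_{g,k}$, so that the Poincar\'e dual of a product suborbifold factors as a wedge of pullbacks. Since the de Rham formulation of Poincar\'e duality used in Theorem~\ref{thm:equality-polyfold-invariants} reduces this multiplicativity to the standard K\"unneth calculation for de Rham cohomology of a product, no new analytic or perturbation-theoretic input is required beyond what the two invoked theorems already provide.
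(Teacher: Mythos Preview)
Your proposal is correct and follows the same route as the paper: the corollary is stated there as an immediate consequence of Theorems~\ref{thm:intro-polyfold-invariants-intersection} and \ref{thm:equality-polyfold-invariants}, with no further argument given. You have simply unpacked the K\"unneth and Poincar\'e-duality bookkeeping that the paper leaves implicit, and your invocation of Remark~\ref{rmk:invariance-intersection-number} for independence from the perturbation and the basis matches the paper's intent.
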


This definition aligns with the traditional geometric interpretation of the Gromov--Witten
invariants as a count of curves which at the $i$th-marked point passes through $\cX_i$ and such
that the image under the projection $\pi$ lies in $\cB$:
	\[
	\begin{tikzcd}[column sep= huge, row sep=tiny]
	\cS_{A,g,k}(p) \arrow[r,"ev_1\times \cdots \times ev_k \times \pi"] & Q^k \times \dmlog_{g,k} \arrow[d,phantom,"\cup\quad"] \\
	& \cX_1\times \cdots \times \cX_k \times \cB.
	\end{tikzcd}
	\]

\begin{remark}
	As we will show in subsequent work, the projection to the Deligne-- Mum\-ford orbifold,
		\[
		\pi: \cZ_{A,g,k} \to \dmlog_{g,k},
		\]
	is not a submersion. It follows that the map 
		\[
		ev_1\times \cdots \times ev_k \times \pi : \cZ_{A,g,k} \to Q\times \cdots \times Q \times \dmlog_{g,k}
		\]
	does not satisfy the hypotheses of Proposition~\ref{prop:transversality-regular-perturbation}.  Hence transversality of the map $ev_1\times \cdots \times ev_k \times \pi$ with a representing suborbifold $\cX_1\times \cdots \times \cX_k \times \cB$ may only be obtained via perturbation of the suborbifold as in Proposition~\ref{prop:transversality-perturbation-full-suborbifold}.
\end{remark}


\section[The Steenrod problem for orbifolds]{The {S}teenrod problem for orbifolds}\label{sec:steenrod-problem}

We begin this section with a self-contained introduction to some of the basic theory for orbifolds.
Along the way, we take advantage of the opportunity to collect a handful of elementary observations and results regarding orbifolds, which are either undocumented or scattered throughout the literature.
This ultimately culminates in a proof of the Steenrod problem for orbifolds.

\subsection{Orbifolds and ep-groupoids}\label{subsec:orbifolds-ep-groupoids}

The notion of orbifold was first introduced by Satake \cites{satake1956generalization}, with further descriptions in terms of groupoids and categories by Haefliger \cites{haefliger1971homotopy, haefliger1984groupoide, haefliger2001groupoids}, and Moerdijk \cites{moerdijk2002orbifolds, moerdijk2003introduction}.

Recently, Hofer, Wysocki, and Zehnder have utilized the language of ep-groupoids in the development polyfold theory.
In the present context we do not need the full strength of this theory; however the polyfold literature (in particular, \cite{HWZbook}) provides a large body of basic definitions and results on orbifolds and ep-groupoids.

\begin{definition}[{\cite[Defs.~7.1,~7.3]{HWZbook}}]
	A \textbf{groupoid} $(O,\bm{O})$ is a small category consisting of a set of objects $O$, a set of morphisms $\bm{Z}$ which are all invertible, and the five structure maps $(s,t,m,u,i)$ (the source, target, multiplication, unit, and inverse maps).
	An  \textbf{ep-groupoid} is a groupoid $(O,\bm{O})$ whose object set $O$ and morphism set $\bm{O}$ both have the structure of finite-dimensional manifolds,
	and such that all the structure maps are smooth maps and which satisfy the following properties.
	\begin{itemize}
		\item \textbf{(\'etale).}  The source and target maps
		$s:\bm{O}\to O$ and $t:\bm{O}\to O$ are surjective local diffeomorphisms.
		\item \textbf{(proper).}  For every point $x\in O$, there exists an
		open neighborhood $V$ of $x$ such that the map
		$t:s^{-1}(\overline{V})\rightarrow O$ is a proper mapping.
	\end{itemize}
\end{definition}

For a fixed object $x\in O$ we denote the \textbf{isotropy group of $x$} by
	\[
	\bm{G}(x) := \{	\phi \in \bm{O} \mid s(\phi)=t(\phi = x)	\}.
	\]
The properness condition ensures that this is a finite group.
The \textbf{non-effective part} of the isotropy group $\bm{G}(x)$ is the subgroup which acts trivially on a local uniformizer, i.e.,
	\[
	\bm{G}^\text{non-eff}(x) := \{	\phi \in \bm{G}(x)	\mid	\phi (U) = U	\},
	\]
while the \textbf{effective isotropy group} is the quotient group
	\[
	\bm{G}^\text{eff}(x) := \bm{G}(x) / \bm{G}^\text{non-eff}(x).
	\]
The \textbf{orbit space} of an ep-groupoid $(O,\bm{O})$,
	\[
	\abs{O} := O / \sim,
	\]
is the quotient of the set of objects $O$ by the equivalence relation $x\sim x'$ if there exists a morphism $\phi\in \bm{O}$ such that $s(\phi)=x$ and $t(\phi)=x'$.  It is equipped with the quotient topology defined via the map $\pi: O\to\abs{O}, x\mapsto \abs{x}.$

\begin{definition}
	Let $\cO$ be a second-countable, paracompact, Hausdorff topological space.  An \textbf{orbifold structure} on $\cO$ consists of an ep-groupoid $(O,\bm{O})$ and a homeomorphism $\abs{O}\simeq \cO$ (compare with \cite[Def.~16.1]{HWZbook}).
\end{definition}

An \textbf{orientation} of an orbifold structure consists of a choice of orientation on the object space and on the morphism space, such that the source and target maps are orientation preserving local diffeomorphisms.

Defining an ep-groupoid involves making a choice of local structures.  Taking an equivalence class of ep-groupoids makes our differentiable structure choice independent.  The appropriate notion of equivalence in this category-theoretic context is a ``Morita equivalence class'' (see \cite[Def.~16.2]{HWZbook}).

\begin{definition}
	An \textbf{orbifold} consists of a second-countable, paracompact, Hausdorff topological space $\cO$ together with a Morita equivalence class of orbifold structures $[(O,\bm{O})]$ on $\cO$ (compare with \cite[Def.~16.3]{HWZbook}).
\end{definition}

Taking a Morita equivalence class of a given orbifold structure is analogous to taking a maximal atlas for a given atlas in the usual definition of a manifold.

We say that that $\cO$ is \textbf{closed} if the underlying topological space is compact, and if the object and morphism spaces are boundaryless.
The orbifold $\cO$ is \textbf{orientable} if a representative orbifold structure $(O,\bm{O})$ can be given an orientation.

\begin{definition}\label{def:manifold-as-trivial-orbifold}
	Consider a manifold $\cM$. We may define a \textbf{trivial orbifold structure} on $\cM$ as follows:
	\begin{itemize}
		\item define the object set by $M:=\cM$,
		\item define the morphism set by $\bm{M} := \{ \id_x : x\to x \mid x\in \cM\}$.
	\end{itemize}
	It is obvious this defines an orbifold structure on the underlying topological space $\cM$.
\end{definition}

\begin{notation}
	It is common to denote both an ep-groupoid $(O,\bm{O})$, and its object set $O$, by the same letter ``$O$.''	
	We will refer to the underlying set, the underlying topological space, or the orbifold by the letter ``$\cO$.''
	Furthermore, we will write objects as ``$x\in O$,'' morphisms as ``$\phi \in \bm{O}$,'' and points as ``$[x]\in \cO$'' (due to the identification $\abs{O} \simeq \cO$).
\end{notation}

The following proposition shows the relationship between the more classically familiar definition of an orbifold (defined in terms of the local topology), and our abstract formulation of an orbifold in terms of ep-groupoids.

\begin{proposition}[Natural representation of $\bm{G}(x)$, {\cite[Thm.~7.1]{HWZbook}}] \label{prop:natural-representation}
	Consider an ep-groupoid $(O,\bm{O})$.  Let $x\in O$ with isotropy group $\bm{G}(x)$.  Then for every open neighborhood $V$ of $x$ there exists an open neighborhood $U\subset V$ of $x$, a group homomorphism $\Phi : \bm{G}(x)\rightarrow \text{Diff}(U)$, $g\mapsto  \Phi (g)$,  and a smooth map
	$\Gamma: \bm{G}(x)\times U\rightarrow \bm{O}$ such that the following holds.
	\begin{enumerate}
		\item $\Gamma(g,x)=g$.
		\item $s(\Gamma(g,y))=y$ and $t(\Gamma(g,y))=\Phi (g)(y)$ for all $y\in U$ and $g\in \bm{G}(x)$.
		\item If $h: y\rightarrow z$ is a morphism between points in $U$, then there exists a unique element $g\in \bm{G}(x)$ satisfying $\Gamma(g,y)=h$, i.e., 
		\[
		\Gamma: \bm{G}(x)\times U\rightarrow \{\phi\in \bm{O} \mid   \text{$s(\phi)$ and $t(\phi)\in U$}\}
		\]
		is a bijection.
	\end{enumerate}
	The data $(\Phi,\Gamma)$ is called the \textbf{natural representation} of $\bm{G}(x)$.
	The open neighborhood $U$ is called a \textbf{local uniformizer} centered at $x$.
\end{proposition}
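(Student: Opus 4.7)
The plan is to build $U$, $\Phi$, and $\Gamma$ in stages, using the étale property to produce local sections of the source map and the proper property to rule out spurious morphisms.

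First I would establish that $\bm{G}(x)$ is finite. The fiber $s^{-1}(x)$ is discrete because $s$ is a local diffeomorphism; the proper axiom furnishes an open neighborhood $V_0$ of $x$ such that $t|_{s^{-1}(\overline{V_0})}$ is proper, whence $\bm{G}(x) = s^{-1}(x) \cap t^{-1}(x)$ is discrete and compact, hence finite. Next, for each $g \in \bm{G}(x)$, the étale condition provides an open neighborhood $W_g \subset \bm{O}$ of $g$ on which $s|_{W_g}$ is a diffeomorphism onto an open neighborhood $s(W_g)$ of $x$; as $\bm{O}$ is Hausdorff and $\bm{G}(x)$ is finite, I may shrink so that the $W_g$ are pairwise disjoint, and write $\sigma_g := (s|_{W_g})^{-1}$ for the resulting local section.

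I would then assemble these sections on a common domain. Set $U_1 := V \cap V_0 \cap \bigcap_{g \in \bm{G}(x)} s(W_g)$ and tentatively define $\Phi(g) := t \circ \sigma_g$ and $\Gamma(g,y) := \sigma_g(y)$ on $U_1$. Since $\Phi(g)(x) = t(g) = x$, continuity lets me shrink $U_1$ to an open neighborhood $U$ of $x$ satisfying $\Phi(g)(U) \subset U_1$ for every $g$. Conditions (1) and (2) are then immediate from the construction. The homomorphism property $\Phi(gh) = \Phi(g) \circ \Phi(h)$ follows because the map $y \mapsto m(\sigma_g(\Phi(h)(y)), \sigma_h(y))$ is a smooth local section of $s$ that sends $x$ to $gh$; uniqueness of local sections of an étale map forces it to coincide with $\sigma_{gh}$ on a connected neighborhood of $x$, and applying $t$ yields the identity.

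The main obstacle is establishing the bijection (3): every morphism $h : y \to z$ with $y, z \in U$ arises as $\Gamma(g, y)$ for a unique $g \in \bm{G}(x)$. I would argue by contradiction using properness. If no admissible $U$ worked, one could extract sequences $y_n, z_n \to x$ and morphisms $h_n : y_n \to z_n$ lying outside $\bigsqcup_g W_g$. Eventually $s(h_n) = y_n \in \overline{V_0}$, so the $h_n$ belong to $s^{-1}(\overline{V_0})$, while $t(h_n) = z_n$ is contained in a compact neighborhood of $x$; properness of $t|_{s^{-1}(\overline{V_0})}$ then confines the $h_n$ to a compact subset of $\bm{O}$. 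Passing to a subsequence, $h_n \to h_\infty$ with $s(h_\infty) = t(h_\infty) = x$, so $h_\infty \in \bm{G}(x)$, and then $h_n$ must eventually lie in the open neighborhood $W_{h_\infty}$, a contradiction. Uniqueness of the representing $g$ is immediate from disjointness of the $W_g$. It is this interplay of the étale and proper conditions that constitutes the technical heart of the argument.
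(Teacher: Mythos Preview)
The paper does not prove this proposition; it is quoted verbatim from \cite[Thm.~7.1]{HWZbook} and stated without argument, so there is no in-paper proof to compare against. Your outline is the standard argument (and essentially the one given in the cited reference): extend each $g\in\bm{G}(x)$ to a local section $\sigma_g$ of the \'etale map $s$, set $\Gamma(g,\cdot)=\sigma_g$ and $\Phi(g)=t\circ\sigma_g$, verify the homomorphism law by uniqueness of local sections, and use properness to rule out stray morphisms near $x$. The core ideas are all present and correct.

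One point is glossed over. You shrink to a neighborhood $U$ with $\Phi(g)(U)\subset U_1$, but the statement demands $\Phi:\bm{G}(x)\to\text{Diff}(U)$, so you need each $\Phi(g)$ to carry $U$ onto $U$. As written, nothing prevents $\Phi(g)(y)$ from escaping $U$. The fix is routine: once the homomorphism identities hold on some neighborhood $U'$, replace $U'$ by the finite intersection $U:=\bigcap_{g\in\bm{G}(x)}\Phi(g)^{-1}(U')$; then for $y\in U$ and any $h,k$ one has $\Phi(k)\bigl(\Phi(h)(y)\bigr)=\Phi(kh)(y)\in U'$, so $\Phi(h)(U)\subset U$, and together with $\Phi(h^{-1})$ this makes $\Phi(h)$ a diffeomorphism of $U$. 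A minor aside: in your homomorphism step the phrase ``on a connected neighborhood'' is unnecessary---uniqueness of a continuous local section of an \'etale map through a prescribed point follows from continuity alone, no connectedness needed.
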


Having given an abstract definition of an orbifold we now define maps between them.

\begin{definition}
	A \textbf{$C^k$ functor} between two ep-groupoids
	\[
	\hat{f}:(O,\bm{O}) \to (P,\bm{P})
	\]
	is a functor on groupoidal categories which moreover is an $C^k$ map when considered on the object and morphism sets.
\end{definition}

An $C^k$ functor between two orbifold structures $(O,\bm{O})$, $(P,\bm{P})$ with underlying topological spaces $\cO$, $\cP$ induces a continuous map on the orbit spaces $\abs{\hat{f}}:\abs{O} \to \abs{P}$, and hence also induces a continuous map $f : \cO \to \cP$, as illustrated in the following commutative diagram.
	\[
	\begin{tikzcd}[row sep=small]
	\abs{O} \arrow[d,dash,"\begin{sideways}$\sim$\end{sideways}"] \arrow[r,"\abs{\hat{f}}"] & \abs{P} \arrow[d,dash,"\begin{sideways}$\sim$\end{sideways}"] \\
	\cO \arrow[r,"f"] & \cP
	\end{tikzcd}
	\]

\begin{definition}
	Consider two topological spaces  $\cO$, $\cP$ with orbifold structures $(O,\bm{O})$, $(P,\bm{P})$. We define a \textbf{$C^k$ map between orbifolds} as a continuous map 
	\[
	f: \cO \to \cP
	\]
	between the underlying topological spaces of the orbifolds, for which there exists an associated $C^k$ functor
	\[
	\hat{f}: (O,\bm{O}) \to (P,\bm{P}).
	\]
	such that $\abs{\hat{f}}$ induces $f$.
\end{definition}

\begin{remark}
	From an abstract point of view a stronger notion of map is needed.   This leads to the definition of \textit{generalized maps} between orbifold structures, following a category-theoretic localization procedure \cite[\S~2.3]{HWZ3}.  Following this, a precise notion of map between two orbifolds is defined using an appropriate equivalence class of a given generalized map between two given orbifold structures \cite[Def.~16.5]{HWZbook}.	
	With this in mind, taking an appropriate equivalence class of a given $C^k$ functor between two given orbifold structures is sufficient for defining a map between two orbifolds.
\end{remark}

\begin{theorem}[Whitney approximation] \label{thm:whitney-approximation}
	Let $\cO$ be a topological space with orbifold structure $(O,\bm{O})$, and let $\cM$ be a manifold with trivial orbifold structure $(M,\bm{M})$. Consider a continuous map between the underlying topological spaces,
		$
		f: \cO \to \cM.
		$
	Then there exists a smooth functor
		$
		\hat{h} : (O,\bm{O}) \to (M,\bm{M})
		$
	such that the induced continuous map $h=\abs{\hat{h}} : \cO \to \cM$ is homotopic to $f$.
	Furthermore, if $f$ is proper, we may assume that $h$ is also proper.
\end{theorem}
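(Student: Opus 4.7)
The plan is to reduce to the standard Whitney approximation for maps into a Euclidean space, performed $\bm{O}$-equivariantly on the object set $O$ so that the resulting smooth map descends to $\cO$. Concretely, I would fix a proper smooth embedding $\iota: \cM \hookrightarrow \R^N$ with tubular neighborhood $V \supset \iota(\cM)$ and smooth retraction $r: V \to \iota(\cM)$. Writing $\pi: O \to \abs{O} \simeq \cO$ for the orbit projection, the composite $F := \iota \circ f \circ \pi : O \to \R^N$ is a continuous map that is constant on $\bm{O}$-orbits, so the task reduces to constructing a smooth $\bm{O}$-invariant map $\tilde F : O \to V$ close enough to $F$ that the straight-line interpolation $(1-t)F + t\tilde F$ stays in $V$. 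Setting $\hat{h} := \iota^{-1} \circ r \circ \tilde F$ then gives a smooth $\bm{O}$-invariant map $O \to M$; since the only morphisms in $\bm{M}$ are identities, such a map is tautologically a smooth functor, and it induces $h = \abs{\hat{h}} : \cO \to \cM$ homotopic to $f$ via $\iota^{-1} \circ r \circ ((1-t)F + t\tilde F)$.

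To produce $\tilde F$, I would cover $O$ by a locally finite family of local uniformizers $U_i$ centered at $x_i \in O$, as supplied by Proposition~\ref{prop:natural-representation}, together with the natural representations $\bm{G}(x_i) \to \text{Diff}(U_i)$; shrinking if necessary, I arrange that $F$ varies on each $U_i$ within a preassigned tolerance. On each $U_i$ I would smooth $F|_{U_i}$ by ordinary Euclidean convolution in chart coordinates and then symmetrize by averaging over the finite group $\bm{G}(x_i)$, yielding a $\bm{G}(x_i)$-equivariant smooth approximation $\tilde F_i$; finiteness of the isotropy, guaranteed by the proper axiom, is essential. Clause~(3) of Proposition~\ref{prop:natural-representation} then permits $\tilde F_i$ to be transported consistently along morphisms to the saturation of $U_i$, giving a local $\bm{O}$-invariant smooth map. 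Gluing these with an $\bm{O}$-invariant smooth partition of unity $\{\rho_i\}$ subordinate to $\{U_i\}$, obtained by averaging an ordinary partition of unity over the finite isotropy groups and extending by saturation in the same way, yields $\tilde F := \sum_i \rho_i \tilde F_i$ with the required properties.

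For the properness addendum, before smoothing I would fix an $\bm{O}$-invariant continuous tolerance $\delta : O \to (0,\infty)$ which descends to a function on $\cO$ tending to $0$ at infinity, possible by paracompactness of $\cO$, and carry out the approximation $\delta$-finely. Then for every compact $K \subset \cM$ the set $h^{-1}(K)$ is contained in $f^{-1}(K')$ for a slightly enlarged compact $K' \supset K$, hence is compact; the same estimate applied fiberwise to the straight-line homotopy shows that this homotopy is itself proper.

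The main obstacle I anticipate is the careful bookkeeping that promotes locally $\bm{G}(x_i)$-equivariant data to globally $\bm{O}$-invariant smooth data on $O$. Two uniformizers $(U_i, \bm{G}(x_i))$ and $(U_j, \bm{G}(x_j))$ whose orbits intersect in $O$ may carry different isotropy groups acting in different charts, and the compatibility of the pieces $\tilde F_i$ and $\rho_i$ on overlaps has to be extracted from the étale axiom together with clause~(3) of Proposition~\ref{prop:natural-representation}, which identifies local morphisms with elements of the isotropy group. Once this equivariant patching machinery for ep-groupoids is set up, the rest of the argument reduces to the classical Whitney approximation theorem applied in each chart.
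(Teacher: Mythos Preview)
Your approach is correct and shares the paper's overall architecture: embed $\cM$ properly in $\R^N$, take a tubular neighborhood with smooth retraction, build a smooth $\bm{O}$-invariant $\delta$-approximation $\tilde F$ to $F = \iota \circ f \circ \pi$, and compose with the retraction; the properness addendum is handled the same way.

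Where you diverge is in how $\tilde F$ is produced, and here you work harder than necessary. You convolve $F|_{U_i}$ in chart coordinates, average over $\bm{G}(x_i)$ to restore equivariance, transport to the saturation of $U_i$, and glue---and you rightly flag the overlap bookkeeping as the main obstacle. The paper avoids all of this with a single observation: since $F$ factors through the orbit space it is already morphism-invariant, so on each uniformizer one may take the \emph{constant} $F(x_i)$ as the local smooth approximation (after shrinking $U_i$ so that $\lVert F - F(x_i)\rVert < \delta$ there). Constants are trivially morphism-invariant, so no averaging, no transport, and no compatibility checks are needed; gluing with a morphism-invariant smooth partition of unity $\{\beta_i\}$, whose existence the paper simply cites from \cite[Thm.~7.4]{HWZbook}, gives $\tilde F(y) = \sum_i \beta_i(y)\, F(x_i)$ directly. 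Your route would go through, but the paper's trick eliminates exactly the equivariant-patching machinery you anticipated as the hard part.
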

\begin{proof}
We break the proof into four steps.

\begin{itemize}[leftmargin=0em]
\item[]\textbf{Step 1:} \emph{Associated to a continuous map between the the underlying topological spaces
			$f: \cO \to \cM$
			there exists a continuous functor $\hat{f}: (O,\bm{O}) \to (M,\bm{M})$
			such that $f = \abs{\hat{f}}$.}
\end{itemize}
Construct the continuous functor 
	$
	\hat{f}: (O,\bm{O}) \to (M,\bm{M})
	$
as follows:
\begin{itemize}
	\item on objects, define $O \to M$ by the composition $O \to \abs{O} \xrightarrow{f} \abs{M}\simeq M$,
	\item on morphisms, define $\bm{O} \to \bm{M}$ by $\bm{O}\xrightarrow{s} O \to \abs{O} \xrightarrow{f} \abs{M}\simeq\bm{M}$.
\end{itemize}
In both cases, the definition of the trivial orbifold structure $(M,\bm{M})$ gives the identifications $M\simeq\abs{M}\simeq \bm{M}$. 
From this definition it is clear that the induced continuous map $\abs{\hat{f}}$ is the same as the original map $f$.
	
Following step 1, the proof is (nearly) identical to the modern proof for smooth manifolds (see \cite[Thms.~6.21,~6.26]{lee2012introduction}). The main new ingredient is the existence of morphism invariant smooth partitions of unity (see \cite[Def.~7.16, Thm.~7.4]{HWZbook} for this existence).
	
\begin{itemize}[leftmargin=0em]
\item[]\textbf{Step 2:} \emph{Let $\delta: \cO \to \R^+$ be a positive continuous function. 
			Let $\hat{f}: (O,\bm{O}) \to \R^N$ be a continuous functor (where $\R^N$ has the trivial orbifold structure). Then there exists a smooth functor $\hat{g}:(O,\bm{O}) \to \R^N$ such that
			\[
			\norm{ f([x]) - g([x]) } < \delta([x]), \qquad \text{for all } [x]\in \cO.
			\]}
\end{itemize}
For any point $[x_0] \in \cO$, choose a local uniformizer $U(x_0)$ centered at a representative $x_0 \in O$.
By continuity of the composition $\delta\circ \pi : O \to \R^+$, and continuity of $\hat{f}$, there exists an open neighborhood $V(x_0)$ of $x_0$ such that
	\[
	\norm{\hat{f}(x) - \hat{f}(x_0)} < \delta([x]), \qquad \text{for all } x\in V(x_0).
	\]
By the $\bm{G}(x_0)$-invariance of $\hat{f}$, we may moreover assume the neighborhood $V(x_0)$ is $\bm{G}(x_0)$-invariant. Since the projection $\pi: O \to \cO, x\mapsto [x]$ is an open map, $\abs{V(x_0)}$ is an open neighborhood of $[x_0]\in \cO$.
Repeating this for any point $[x_i] \in \cO$ we obtain an open cover $\{\abs{V_i}\}_{i\in I}$ of $\cO$.
Denote by $V^*_i := \pi^{-1} (\pi(V_i))$ the saturation of each open set $V_i$; the collection $\{V^*_i\}_{i\in I}$ is a saturated open cover of the object space $O$.
Moreover, observe via the morphism invariance of $\hat{f}$ that
	\begin{equation}\label{eq:norm}
	\norm{\hat{f}(y)- f([x_i])}=\norm{\hat{f}(y) - \hat{f}(x_i)} < \delta([y]), \qquad \text{for all } y\in V^*_i.
	\end{equation}
		
Let $\beta_i : O \to [0,1], i\in I$ be a morphism invariant smooth partition of unity subordinate to the cover 
$\{V^*_i\}_{i\in I}$. Define 
	\begin{align*}
	\hat{g}: O &\to \R^k\\
			y & \mapsto \sum_{i\in I} \beta_i (y) f([x_i]).
	\end{align*}
Then $\hat{g}$ is a smooth function. Moreover, $\hat{g}$ is morphism invariant since each $\beta_i$ is morphism invariant. By sending all morphisms $\phi \in \bm{O}$ to the appropriate identity morphism (that is, $\hat{g}(\phi) := \id_{\hat{g} (s(\phi))}$) we see that $\hat{g}$ defines a smooth functor.
	
For any object $y\in O$, since $\sum_{i\in I} \beta_i \equiv 1$ we have
	\begin{align*}
	\norm{ \hat{f}(y) - \hat{g}(y)	}
	& = \norm{\left(	\sum_{i\in I} \beta_i(y)	\right) \hat{f}(y) - \sum_{i\in I} \beta_i (y) f([x_i])} \\
	& = \sum_{i\in I} \beta_i(y) \norm{	\hat{f}(y) - f([x_i])	} \\
	& \leq \delta([y]),
	\end{align*}
where the final inequality comes from equation \eqref{eq:norm} and the fact that $y \in \supp \beta_i$ only if $y \in V^*_i$.
	
\begin{itemize}[leftmargin=0em]
	\item[]\textbf{Step 3:} \emph{We define the smooth functor approximating $f$, following \cite[Thm.~6.26]{lee2012introduction} closely.}
\end{itemize}
By the Whitney embedding theorem \cite[Thm.~6.15]{lee2012introduction}, we may assume $\cM$ is a properly embedded submanifold of $\R^N$.
By \cite[Prop.~6.25]{lee2012introduction}, there exists a tubular neighborhood $U$ of $\cM$ in $\R^N$ and a smooth retraction $r: U \to \cM$.
	
Define $\delta' : \cM \to \R^+$ by
	\[
	\delta'(x) := \sup \{\ep\leq 1 \mid B_\ep (x)\subset U\},
	\]
as in \cite[Thm.~6.26]{lee2012introduction}, this function is continuous. Define a continuous function $\delta: \cO \to \R^+$ by the composition $\delta := \delta' \circ f$.
	
By step 2, there exists a smooth functor $\hat{g} : (O,\bm{O}) \to \R^N$ such that
	\[
	\norm{f([x])-g([x])} \leq \delta([x]) \qquad \text{for all } [x] \in \cO.
	\]
Let $H: \cO \times [0,1] \to \cM$ be the composition of $r$ with the straight line homotopy between $f([x])$ and $g([x])$:
	\[
	H([x],t) : = r\left(	(1-t) f([x]) + t g([x])	\right)
	\]
Note that $	(1-t) f([x]) + t g([x]) \in U$; by construction, $\norm{f([x])-g([x])} \leq \delta([x])= \delta'(f([x]))$ hence $g([x]) \in B_{\delta'([x])} (f([x])) \subset U$, and then use convexity of the ball.
	
Thus, $H$ is a homotopy between the maps $H(\cdot,0) = f$ and the map $H(\cdot,1) = r\circ g$.
Moreover, the composition $h:= r\circ g : \cO \to \cM$ has an associated smooth functor given by the composition
	\[
	\hat{h}: (O,\bm{O}) \xrightarrow{\hat{g}} (U,\bm{U}) \xrightarrow{\hat{r}} (M,\bm{M}).
	\]
where $\hat{r}: (U,\bm{U}) \to (M,\bm{M})$ is the trivial functor associated to the trivial orbifold structures on $U$ and $\cM$.
	
\begin{itemize}[leftmargin=0em]
	\item[]\textbf{Step 4:} \emph{We discuss the statement about properness.}
\end{itemize}
In this case, the reasoning is identical to the analogous statement for smooth manifolds. 
This completes the proof of the theorem.
\end{proof}

We now explain the appropriate definition of an orbifold bundle.
Let $(O,\bm{O})$ be an ep-groupoid, and consider a vector bundle over the object space, $P:E\to O.$
The source map $s:\bm{O}\to O$ is a local diffeomorphism, and hence we may consider the fiber product
	\[
	\bm{O} _s\times_P E = \{(\phi,e)\in \bm{O}\times E	\mid	s(\phi)=P(e)	\}.
	\]
We may also view as $\bm{O} _s\times_P E$ as the pullback bundle via $s$ over the morphism space $\bm{O}$,
	\begin{equation*}
		\begin{tikzcd}
		\bm{O} _s\times_P E \arrow[r] \arrow[d] & E \arrow[d] \\
		\bm{O} \arrow[r, "s"] & O.
		\end{tikzcd}
	\end{equation*}

\begin{definition}[{\cite[Def.~8.4]{HWZbook}}]\label{def:orbifold-bundle}
	A \textbf{bundle over an ep-groupoid} consists of a vector bundle over the object space $P:E\to O$ together with a bundle map
		\[
		\mu : \bm{O} _s\times_P E \to E
		\]
	which covers the target map $t:\bm{O} \to O$, such that the diagram
	\begin{equation*}
		\begin{tikzcd}
		\bm{O} _s\times_P E \arrow[r, "\mu"] \arrow[d] & E \arrow[d] \\
		\bm{O} \arrow[r, "t"] & O
		\end{tikzcd}
	\end{equation*}
	commutes.
	Furthermore we require the following:
	\begin{enumerate}
		\item $\mu$ is a surjective local diffeomorphism and linear on fibers,
		\item $\mu(\id_x,e)= e$ for all $x\in O$ and $e\in E_x$,
		\item $\mu(\phi \circ \g ,e)= \mu (\phi,\mu(\g,e))$ for all $\phi,\g\in\bm{O}$ and $e\in E$ which satisfy
				\[
				s(\g) = P(e),\qquad t(\g) = s(\phi) = P(\mu(\g,e)).
				\]
	\end{enumerate}
\end{definition}

Given a bundle over an ep-groupoid we may obtain an ep-groupoid $(E,\bm{E})$ as follows: take the original vector bundle $E$ as object set, and take $\bm{E} := \bm{O} _s\times_P E$ as morphism set.  Moreover, we have source and target maps $s,t :\bm{E} \to E$ defined as follows:
	\[
	s(\phi,e) := e, \qquad t(\phi,e) := \mu(\phi,e).
	\]
There is a natural smooth projection functor $\hat{P}: (E,\bm{E}) \to (O,\bm{O})$.

Given an ep-groupoid $(O,\bm{O})$ we can define \textbf{tangent bundle over an ep-groupoid} by taking the tangent bundle over the object space $P:TO\to O$ and by defining
	\[
	\mu : 	\bm{O} _s\times_P TO \to TO, \qquad	(\phi, e) \mapsto T\phi (e).
	\]

We now introduce what will become the central objects resulting from the proof of the Steenrod problem for orbifolds.

\begin{definition}\label{def:embedded-full-suborbifold}
	Let $(O,\bm{O})$ be an orbifold structure of dimension $n$.
	An \textbf{embedded full suborbifold structure} of dimension $k$ consists of a subgroupoid $(X,\bm{X}) \subset (O,\bm{O})$, whose objects and morphisms are given the subspace topology, and which satisfies the following conditions:
	\begin{enumerate}
		\item \label{def:suborbifold-condition-1} The subcategory $(X,\bm{X})$ is full and the object set is saturated, i.e.,
		\begin{itemize}
			\item for all objects $x,y\in X$, $\text{mor}_X (x,y) = \text{mor}_O (x,y)$,
			\item $X=\pi^{-1}(\pi(X))$, where $\pi: O \to \abs{O}, x\mapsto [x]$.
		\end{itemize}
		\item \label{def:suborbifold-condition-2} At any object $x\in O$ there exists a $\bm{G}(x)$-invariant neighborhood $U\subset O$ and a chart $\phi: U\to \R^n$; it follows that $X\cap U$ is also $\bm{G}(x)$-invariant. We then require that $\phi(X\cap U)$ is the intersection of a $k$-dimensional plane of $\R^n$ with $\phi(U)$.
	\end{enumerate}
	We then say that $\cX\simeq \abs{X}$ with the Morita equivalence class $[(X,\bm{X})]$ is an \textbf{embedded full suborbifold} of the orbifold $\cO\sim \abs{O}$ with Morita equivalence class $[(O,\bm{O})]$.
\end{definition}

With this definition $(X,\bm{X})$ is naturally a $k$-dimensional orbifold structure in its own right.
The object set $X$ has the structure of a $k$-dimensional submanifold of the object set $O$, considered with respect to the manifold structure on $O$. Charts are given by the restrictions $\phi|_{X\cap U} : X\cap U \to \R^k$ and smoothness of the transition maps is then inherited from the smoothness of the transition maps for $O$. In a similar way, one can give the morphism set $\bm{X}$ the structure of a $k$-dimensional submanifold, and check that the \'etale property for $(X,\bm{X})$ is induced from the \'etale property of $(O,\bm{O})$.
One can see properness using the following fact: if a map $f:X\to Y$ is proper, then for any subset $V\subset Y$ the restriction $f|_{f^{-1}(V)}: f^{-1}(V)\to V$ is proper.

With these orbifold structures, the inclusion functor 
	$\hat{i}: (X,\bm{X}) \hookrightarrow (O,\bm{O})$
is a smooth embedding on both objects and morphisms.

Since the subcategory $(X,\bm{X})$ is full, observe that for an object $x\in X$ the isotropy groups, considered with respect to $X$ and with respect to $O$, are identical, i.e.,
	\[
	\{ \phi \in \bm{X} \mid s(\phi)=t(\phi) = x	\} = \{	\phi \in \bm{O} \mid s(\phi)=t(\phi) = x\}.
	\]
We may therefore denote both isotropy groups as $\bm{G}(x)$ without ambiguity.

\begin{remark}
	For a somewhat more general definition of suborbifold we refer to \cite[Def.~2.4]{adem2007orbifolds}, see also \cite[Def.~3.1]{cho2013orbifold}.
	In contrast to the above definition of a full suborbifold, the isotropy groups for a general suborbifold $\cX$ of an orbifold $\cO$ will not be identical. Instead, at an object $x\in X\subset O$ there exists a $\bm{G}_\cO(x)$-invariant neighborhood $U\subset O$ and a chart $\phi: U\to \R^n$, such that $\phi(X\cap U)$ consists of a {union} of precisely $\sharp \bm{G}^\text{eff}_\cO (x) / \sharp \bm{G}^\text{eff}_\cX(x)$-many
	$k$-dimensional planes intersected with $\phi(U)$.
\end{remark}

There exist definitions of Riemannian metrics and normal bundles for orbifolds; in particular, we may consider the normal bundle of an embedded full suborbifold (see \cite[Appx.]{chen2001orbifoldgromovwitten}).

\begin{definition}
	Let $\cX$ be an embedded full suborbifold of an orbifold $\cO$, and consider the normal bundle $N\cX$.
	We say that $N\cX$ has \textbf{fiberwise trivial isotropy action} if for every object $x\in X$ the isotropy group $\bm{G}(x)$ acts trivially on the normal space $N_x X$ at $x$.
\end{definition}

In terms of Definition~\ref{def:orbifold-bundle}, $N\cX$ has fiberwise trivial isotropy action if $\mu(\phi,e) = e$ for any isotropy morphism $\phi\in\bm{G}(x)$.

We can formulate a tubular neighborhood theorem as follows.
For a sufficiently small open neighborhood $N_\ep\cX$ of the zero section $\cX \to N\cX$ there exists a well-defined map between orbifolds 
		\[i: N_\ep\cX \to \cO\]
which is a homeomorphism onto its image when considered with respect to the underlying topological spaces, and which is a local diffeomorphism when considered with respect to the orbifold structures.

\begin{remark}\label{rmk:orbit-space-normal-bundle}
	It follows from the definition that the underlying topological space $N\cX$ of a normal bundle with fiberwise trivial isotropy action is in fact a vector bundle over the underlying topological space $\cX$.  
	This is in contrast to the orbit space of an arbitrary orbifold bundle---in general, the quotient of a fiber by a nontrivial isotropy action will no longer be a vector space.
\end{remark}

\subsection{Elementary orbifold algebraic topology}\label{subsec:elementary-orbifold-alg-top}

From the beginning, Satake gave definitions for the natural analog of de Rahm cohomology for orbifolds, and recognized that closed oriented orbifolds satisfy Poincar\'e duality with respect to this cohomology \cite{satake1956generalization}.
Given an orbifold $\cO$ with an orbifold structure $(O,\bm{O})$ the \textbf{de Rahm complex} is be defined as the space of differential forms on the object space $O$ which are morphism invariant:
	\[
	\Omega_{ep}^* (O) := \{	\ww\in \Omega^*(O) \mid T\phi^* \ww_y = \ww_x \text{ for every morphism } \phi:x\to y	\}.
	\]
The exterior derivative preserves morphism invariance, and hence we can consider the \textbf{de Rahm cohomology}
	$
	H^*_{\dR} (\cO) := H^*(\Omega_{ep}^* (O), d)
	$
of an orbifold.

\begin{theorem}[Comparison of orbifold cohomology theories]
	Consider an orbifold $\cO$.
	In addition to the de Rahm cohomology, we may consider the \v{C}ech cohomology (with real coefficients) of the underlying topological space, denoted $\check{H}^*(\cO; \R)$; and the singular cohomology (with real coefficients) of the underlying topological space, denoted $H^*(\cO; \R)$. These cohomology theories are naturally isomorphic:
		\[
		H^*_{\dR} (\cO) \simeq \check{H}^*(\cO; \R) \simeq H^*(\cO; \R).
		\]
\end{theorem}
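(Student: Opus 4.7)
The plan is to handle the two isomorphisms separately: I would treat $\check{H}^*(\cO;\R) \cong H^*(\cO;\R)$ as a purely topological statement about the underlying paracompact Hausdorff space, and prove $H^*_{\dR}(\cO) \cong \check{H}^*(\cO;\R)$ by an orbifold-adapted \v{C}ech--de Rham double complex argument. Both isomorphisms will be induced by explicit natural maps, so naturality will be visible from the construction.

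First I would construct a good cover of $\cO$ by orbifold uniformizers. Using a morphism-invariant smooth partition of unity \cite[Thm.~7.4]{HWZbook}, average local Euclidean metrics to obtain a morphism-invariant Riemannian metric on $(O,\bm{O})$, and cover $\cO$ by geodesically convex uniformizers $[U_i]$ where each $U_i$ is a $\bm{G}(x_i)$-invariant geodesic ball centered at some $x_i \in O$. Any finite intersection $[U_{i_0 \cdots i_p}]$ is then itself covered by a uniformizer of the form $V / G$, where $V$ is the intersection (after choice of connecting morphisms) of star-shaped domains and $G$ is a finite group acting linearly on $V$; the underlying topological space is contractible, since the straight-line homotopy to the fixed point commutes with the linear $G$-action and therefore descends to $V/G$. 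This simultaneously provides a good cover in the topological sense (all finite intersections of underlying orbit spaces are contractible) and a cover by uniformizers on which a Poincar\'e-type vanishing can be proved.

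Next, the isomorphism $\check{H}^*(\cO;\R) \cong H^*(\cO;\R)$ follows from the classical Mayer--Vietoris / nerve-theorem comparison for paracompact Hausdorff locally contractible spaces admitting good covers. For the de Rham-to-\v{C}ech isomorphism, I would run the \v{C}ech--de Rham double complex
\[
C^p(\{[U_i]\}, \Omega^q_{ep}) = \prod_{i_0 < \cdots < i_p} \Omega^q_{ep}(U_{i_0 \cdots i_p})
\]
with differentials given by the \v{C}ech coboundary in the $p$-direction and the exterior derivative in the $q$-direction, where $\Omega^q_{ep}$ denotes the sheaf of morphism-invariant smooth $q$-forms on uniformizers. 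The key input is the \emph{orbifold Poincar\'e lemma}: on a uniformizer $V/G$ with $G$ acting linearly on a star-shaped $V$, one has $H^q(\Omega^*_{ep}(V), d) = 0$ for $q > 0$ and $\R$ for $q = 0$. This follows by applying the classical Poincar\'e lemma on $V$ to obtain a primitive and then averaging over the finite group $G$ to produce a $G$-invariant primitive---this averaging step is exactly why real (rather than integral) coefficients are essential. Acyclicity of the columns follows from a standard argument using the morphism-invariant partition of unity subordinate to $\{[U_i]\}$. The two spectral sequences of the double complex degenerate on the row edge to $H^*_{\dR}(\cO)$ and on the column edge to $\check{H}^*(\{[U_i]\};\R) = \check{H}^*(\cO;\R)$, giving the required natural isomorphism; independence of the choice of good cover is handled, as usual, by passing to a common refinement.

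The main obstacle is the construction in the first step: one must verify that intersections of uniformizers assemble into honest orbifold uniformizers in a way compatible with the ep-groupoid structure, so that the sheaf $\Omega^*_{ep}$ is well-defined on the cover and the orbifold Poincar\'e lemma applies on every $[U_{i_0 \cdots i_p}]$. Once this good cover is in hand, the orbifold Poincar\'e lemma via averaging and the existence of morphism-invariant partitions of unity reduce the remainder to standard double-complex bookkeeping, with naturality automatic from the functoriality of pullback on morphism-invariant forms.
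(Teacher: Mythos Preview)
Your approach is essentially correct but takes a genuinely different route from the paper. The paper's proof is two lines of citation: the isomorphism $H^*_{\dR}(\cO) \simeq \check{H}^*(\cO;\R)$ is attributed directly to Satake \cite[Thm.~1]{satake1956generalization}, and the isomorphism $\check{H}^*(\cO;\R) \simeq H^*(\cO;\R)$ is deduced from the fact that orbifolds are triangulable \cite[Prop.~1.2.1]{moerdijk1999simplicial} together with the classical coincidence of \v{C}ech and singular cohomology on triangulable spaces \cite[Ch.~IX, Thm.~9.3]{eilenberg1952foundations}. By contrast, you propose a self-contained \v{C}ech--de Rham double complex argument over a good cover for the first isomorphism, and a nerve-theorem argument for the second. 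Your route is longer but more constructive, makes naturality visible, and avoids invoking the (nontrivial) triangulability result; the paper's route is shorter and offloads the analytic content to Satake.

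One caution on your construction: the assertion that each finite intersection $[U_{i_0\cdots i_p}]$ is covered by a \emph{single} uniformizer $V/G$ with $V$ star-shaped is stronger than what you actually need and is not automatic---an intersection in the orbit space can lift to several components with varying isotropy. What the double-complex argument actually requires is that $H^q(\Omega^*_{ep}(U_{i_0\cdots i_p}),d)=0$ for $q>0$, and for this it suffices to refine the cover so that each nonempty intersection lies inside a single local uniformizer $U_j$; the intersection is then a $\bm{G}(x_j)$-invariant geodesically convex subset of $U_j$, to which the averaged Poincar\'e lemma applies directly. You have correctly flagged this as the main obstacle, so this is a refinement rather than a gap.
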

\begin{proof}
	The first isomorphism $H^*_{\dR} (\cO) \simeq \check{H}^*(\cO; \R)$ was proved by Satake \cite[Thm.~1]{satake1956generalization}. The second isomorphism $\check{H}^*(\cO; \R) \simeq H^*(\cO; \R)$ follows from the fact that orbifolds admit a triangulation (see \cite[Prop.~1.2.1]{moerdijk1999simplicial}) and from the fact that \v{C}ech and singular homology coincide for triangulable spaces (see \cite[Ch.~IX Thm.~9.3]{eilenberg1952foundations}).
\end{proof}

\begin{theorem}[{\cite[Thm.~3]{satake1956generalization}}]
	Let $\cO$ be a closed oriented orbifold of dimension $n$. 
	With respect to the real \v{C}ech (co)homology of $\cO$ (and hence also real singular (co)homology) there exists a natural Poincar\'e duality isomorphism
		\[
		\check{H}^*(\cO;\R) \simeq \check{H}_{n-*}(\cO;\R).
		\]
\end{theorem}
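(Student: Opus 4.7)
The plan is to prove the duality in de Rham cohomology, namely to establish a non-degenerate pairing $H^k_{\dR}(\cO)\otimes H^{n-k}_{\dR}(\cO)\to\R$, and then transfer to \v{C}ech and singular via the comparison isomorphism proved in the previous theorem. The candidate pairing is the integration pairing, $\langle[\ww],[\eta]\rangle = \int_\cO \ww\wedge\eta$, where integration of a top form is defined locally on a uniformizer $U \simeq \phi(U)\subset \R^n$ with isotropy $\bm{G}(x)$ by $\int_{\abs{U}}\alpha := \frac{1}{\sharp \bm{G}^\text{eff}(x)}\int_U \alpha$, and then globalized using a morphism-invariant smooth partition of unity (whose existence on ep-groupoids is recalled in \cite[Thm.~7.4]{HWZbook}). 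Orientability of $(O,\bm{O})$ ensures transition maps are orientation preserving, so this is well-defined.

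First I would verify Stokes' theorem in this setting, so that the pairing descends to cohomology: classical Stokes on each chart $U$ combined with morphism invariance and the weight $1/\sharp\bm{G}^\text{eff}(x)$ ensures contributions from overlapping uniformizers do not double count, and since $\cO$ is closed there are no boundary contributions. Next I would reduce the global statement to a local model via Mayer--Vietoris. Because $\cO$ admits a triangulation (as used in the preceding comparison theorem), it admits a finite \textbf{good cover} by open sets $\{\abs{V_i}\}$ whose preimages in $O$ are local uniformizers and all of whose finite intersections are of the form $U/G$ with $U\subset\R^n$ a convex ball and $G$ acting linearly.

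On such a local piece $U/G$, the morphism-invariant de Rham complex is $\Omega^*(U)^G$, and the averaging operator $\frac{1}{\abs G}\sum_{g\in G}g^*$ commutes with $d$ and with wedge product, exhibiting $H^*_{\dR}(U/G)$ as the $G$-invariants of $H^*(U;\R)$. Convexity of $U$ yields local Poincar\'e duality in top degree (the invariant Poincar\'e lemma), and the weighted integration pairs invariant forms with invariant compactly supported forms non-degenerately. Patching these local dualities by the standard Mayer--Vietoris plus Five Lemma induction on the number of sets in the good cover (using morphism-invariant cutoff functions to build the Mayer--Vietoris sequences) propagates the duality to all of $\cO$.

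The main obstacle will be the consistent bookkeeping of the isotropy actions: one must confirm that the averaging operator splits the short exact sequences needed for Mayer--Vietoris on invariant forms (which requires characteristic zero coefficients --- this is why the statement is over $\R$, not $\Z$), that the weighting factor $1/\sharp\bm{G}^\text{eff}$ in the integration is precisely what is needed for the local pairing to be unimodular after averaging (so that non-degeneracy is preserved exactly at each inductive step rather than acquiring spurious multiplicative constants), and that the partitions of unity entering the Mayer--Vietoris boundary maps can be chosen morphism-invariantly. Once the de Rham duality $H^k_{\dR}(\cO)\simeq H^{n-k}_{\dR}(\cO)^*$ is established, the comparison isomorphisms $H^*_{\dR}(\cO)\simeq\check{H}^*(\cO;\R)\simeq H^*(\cO;\R)$ and the universal coefficient theorem over $\R$ immediately yield the stated isomorphism $\check{H}^*(\cO;\R)\simeq\check{H}_{n-*}(\cO;\R)$.
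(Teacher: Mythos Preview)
The paper does not supply its own proof of this theorem; it is stated with attribution to Satake \cite[Thm.~3]{satake1956generalization} and used as a black box. So there is no paper proof to compare your proposal against.

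Your outline is the standard Bott--Tu style argument adapted to the orbifold setting, and the strategy is sound. Two places are worth tightening. First, the passage from ``$\cO$ admits a triangulation'' to ``$\cO$ admits a finite good cover whose finite intersections are all of the form $U/G$ with $U$ convex and $G$ acting linearly'' is not automatic; you should either use open stars in a triangulation compatible with the orbifold stratification, or invoke an invariant Riemannian metric and take geodesically convex uniformizers, and in either case verify that intersections inherit the required form. Second, in the local step you should make explicit that orientability of $(O,\bm{O})$ forces each local isotropy representation to land in $\SO(n)$, so that the $G$-action on $H^n_c(U)\simeq\R$ is trivial and hence $H^n_c(U)^G=\R$ rather than $0$; this is precisely what makes the local pairing non-degenerate and is where the orientability hypothesis enters. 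With those two points filled in, the Mayer--Vietoris/Five Lemma induction goes through as you describe.
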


Via the isomorphisms $H^*(\cO;\Q)\otimes \R \simeq H^*(\cO;\R)$, $H_*(\cO;\Q)\otimes \R \simeq H_*(\cO;\R)$, observe compact orbifolds satisfy Poincar\'e duality for \v{C}ech and singular cohomology with rational coefficients.
Orbifolds can moreover be considered as \textit{rational homology manifolds}, in the sense that for any $[x]\in\cO$,
	\[
	H_* (\cO, \cO \setminus \{[x]\}; \Q) = 
	\begin{cases}
	\Q 	& *=0,\dim \cO \\
	0	& \text{else}.
	\end{cases}
	\]

\begin{remark}[Failure of Poincar\'e duality for general coefficients]
	In spite of the above fact that compact orbifolds satisfy Poincar\'e duality for rational coefficients, for general coefficients Poincar\'e duality fails. In a recent paper by Lange, a classification theorem establishes that the underlying space of an orbifold is a topological manifold if and only if all local isotropy groups are of a specific form \cite[Thm.~A]{lange2015underlying}.
	Hence, any global quotient orbifold of the form $S^n / G$ for a group $G$ which is not of this specific form will fail to satisfy Poincar\'e duality for general choice of coefficients.
\end{remark}

\begin{remark}[Alternative orbifold cohomology theories]
	The cohomology theories we have considered thus far depend only on the topology of the underlying topological space of an orbifold.
	In \cite{chen2004newcohomology} an exotic cohomology theory (called \textit{Chen--Ruan cohomology}) of orbifolds is defined by formulating an appropriate generalization of quantum cohomology to orbifolds, and then restricting to the degree-zero portion of this cohomology to recover a new cohomology theory.
	In contrast to the above cohomology theories, this cohomology theory incorporates additional data about the local isotropy groups.
\end{remark}

\subsubsection[Relationship between the Thom class and the Poincar\'e dual of a suborbifold]{Relationship between the {T}hom class and the {P}oincar\'e dual of a suborbifold}

Recall the defining properties of the Thom class in topological terms. Consider an oriented rank $k$ vector bundle $\pi: E\to B$ over a paracompact topological space.	Then there exists a unique cohomology class with compact vertical support $u_B \in H^k_{cv} (E;\Z)$ called the \textbf{Thom class}, and which is characterized by the following properties.
\begin{itemize}
	\item For every $x\in B$ the restriction of $u_B$ to $H^k_{cv}(E_x;\Z)\simeq\Z$ is the generator determined by the orientation of $E$.
	\item The map
		\begin{align*}
			H^*(B;\Z) &\to H^{*+k}_{cv}(E;\Z) \\
			\alpha & \mapsto \pi^*\alpha \cup u_B
		\end{align*}
		is an isomorphism.
\end{itemize}

In the current situation of orbifolds, we can rephrase these properties in terms of de Rahm classes.  This can be used to give an explicit description of the Poincar\'e dual of an embedded full suborbifold.

Let $\cX$ be a closed embedded full suborbifold of an orbifold $\cO$.
Consider the normal bundle $N\cX$ of rank $k:=\dim \cO - \dim \cX$, and suppose that it is oriented and has fiberwise trivial isotropy action.	
In terms of de Rahm cohomology, the Thom class is the unique cohomology class $\tau\in H^k_{\dR,cv} (N\cX)$, i.e., a de Rahm form with compact vertical support, 
and which is characterized by the following properties.
\begin{itemize}
	\item For every $[x]\in \cX$ the restriction of $\tau$ to $H_{\dR,c}^k (N_{[x]} \cX)$ is the generator determined by the orientation of $N\cX$. Here $H_{\dR,c}^*$ denotes the space of de Rahm forms with compact support.
	\item The map
		\begin{align*}
		H_{\dR}^* (\cX) &\to H_{\dR,cv}^{*+k} (N\cX) \\
		\ww &\mapsto \pi^*\ww \wedge \tau
		\end{align*}
	is an isomorphism.
\end{itemize}

These two definitions of the Thom class coincide, in the sense that the integral Thom class $u_{\cX}$ maps to the de Rahm class $\tau$ under the map 
	\[H^*_{cv} (N\cX;\Z) \xrightarrow{\otimes\R} H^*_{cv} (N\cX;\R) \simeq H^*_{\dR,cv} (N\cX).\]
Moreover, when $\cX$ is compact the cohomology of the normal bundle $N\cX$ with compact vertical support is the same as the cohomology with compact support; therefore, when $\cX$ is compact its Thom class has compact support.

\begin{proposition}\label{prop:thom-class-poincare-dual}
	Consider the well-defined pushforward $i_* : H^*_{\dR,cv} (N_\ep \cX) \allowbreak \to \allowbreak H^*_{\dR}(\cO)$, defined for $\ep$ sufficiently small.  
	If $\cO$ is closed and oriented this map has the following significance: the Poincar\'e dual of $\cX$ can be represented by the pushforward of the Thom class by this map, i.e., \[\PD ([\cX]) = i_* \tau.\]
\end{proposition}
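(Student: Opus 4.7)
The plan is to verify that $i_\ast \tau$ satisfies the defining integral pairing characterization of $\PD([\cX])$, namely that for every closed form $\omega \in \Omega^{n-k}_{ep}(O)$ representing a class in $H^{n-k}_{\dR}(\cO)$ one has
\[
\int_{\cO} \omega \wedge i_\ast \tau \;=\; \int_{\cX} \omega|_\cX.
\]
Since the de Rham/singular comparison together with Poincar\'e duality for closed oriented orbifolds established earlier in the section identifies $\PD([\cX])$ uniquely by this pairing, such an identity forces $[i_\ast\tau]=\PD([\cX])$ in $H^k_{\dR}(\cO)$.

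First I would unwind the pushforward. Since $\cX$ is closed in $\cO$, the compact-vertical-support class $\tau$ is actually compactly supported on $N\cX$, so for $\ep$ small enough we may take a representative supported in the interior of $N_\ep \cX$; then $i_\ast \tau$ is literally the extension by zero through the tubular neighborhood embedding $i: N_\ep\cX \to \cO$. Because $i$ is a local diffeomorphism of orbifold structures that is a homeomorphism onto its image, integration of forms on the ambient orbifold pulls back cleanly, giving
\[
\int_{\cO} \omega \wedge i_\ast \tau \;=\; \int_{N_\ep \cX} i^\ast \omega \wedge \tau.
\]

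Next I would replace $i^\ast \omega$ by $\pi^\ast(\omega|_\cX)$, where $\pi: N\cX\to\cX$ is the bundle projection. The zero section $s:\cX \hookrightarrow N\cX$ is a homotopy inverse to $\pi$ on the underlying topological spaces, and one checks on ep-groupoids that $\pi^\ast$ and $s^\ast$ induce mutually inverse isomorphisms on $H^\ast_{\dR}$. Since $i\circ s = \hat i$ is the inclusion $\cX\hookrightarrow\cO$, we have $s^\ast i^\ast \omega = \omega|_\cX = s^\ast \pi^\ast(\omega|_\cX)$, so $i^\ast \omega$ and $\pi^\ast(\omega|_\cX)$ are cohomologous; that is, $i^\ast\omega = \pi^\ast(\omega|_\cX) + d\eta$ for some $\eta \in \Omega^{n-k-1}_{ep}(N\cX)$. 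Wedging with the closed form $\tau$ and using $d\tau = 0$ and compact support of $\tau$, an application of the orbifold Stokes theorem gives
\[
\int_{N_\ep \cX} (d\eta)\wedge \tau \;=\; \int_{N_\ep \cX} d(\eta\wedge\tau) \;=\; 0,
\]
so the integrand may be replaced by $\pi^\ast(\omega|_\cX)\wedge \tau$ without changing the value.

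Finally I would invoke fiber integration. Because $N\cX$ has fiberwise trivial isotropy action, Remark~\ref{rmk:orbit-space-normal-bundle} ensures that $N\cX$ is an honest oriented real vector bundle over the orbifold $\cX$, and the fiberwise-integration (projection) formula
\[
\int_{N_\ep \cX} \pi^\ast\alpha \wedge \tau \;=\; \int_{\cX} \alpha\, \pi_\ast \tau \;=\; \int_{\cX} \alpha
\]
carries over from the manifold case, using the normalization $\pi_\ast \tau = 1$ dictated by the first characterizing property of the Thom class. Taking $\alpha = \omega|_\cX$ closes the chain of equalities and yields the desired identity.

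The main technical obstacle I anticipate is the orbifold version of fiber integration and Stokes' theorem: one has to verify that integration of morphism-invariant forms on $N\cX$ really does factor as integration along the vector-bundle fibers followed by integration over $\cX$, and that the boundary term from Stokes vanishes for the compactly supported $\tau$. Both facts are essentially standard once the fiberwise-trivial-isotropy hypothesis is used to reduce computations in a local uniformizer $U$ around $x\in X$ to a $\bm{G}(x)$-equivariant situation in which $\bm{G}(x)$ acts on $U$ while acting trivially on the $N_xX$-factor, so that averaging commutes with fiber integration. This is the one step I would write out carefully; everything else is formal once it is in place.
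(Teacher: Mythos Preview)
The paper states this proposition without proof, treating the identity $\PD([\cX]) = i_\ast\tau$ as a direct orbifold analogue of the classical manifold fact (as in Bott--Tu). There is thus nothing to compare your argument against; your proposal is the standard integral-pairing and fiber-integration proof transplanted to the orbifold setting, and it is correct.

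One small point worth tightening: you write $\eta \in \Omega^{n-k-1}_{ep}(N\cX)$, but the tubular map $i$, and hence $i^\ast\omega$, is only defined on $N_\ep\cX$, so the cohomology identity $i^\ast\omega - \pi^\ast(\omega|_\cX) = d\eta$ and the primitive $\eta$ should live on $N_\ep\cX$. This is harmless since $N_\ep\cX$ still deformation retracts onto $\cX$ via $\pi$ and $s$. Your identification of the fiber-integration step as the only place requiring genuine care is accurate, and the fiberwise-trivial-isotropy hypothesis is exactly what reduces that local computation to the manifold case after averaging.
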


\subsection{Elementary orbifold transversality results}\label{subsec:elementary-orbifold-transversality}

When considering transversality of maps between orbifolds, it is often necessary to use modern techniques such as multisections.
However, when the target space of such a map is a manifold, transversality is easily achievable without such techniques.

\begin{definition}
	Let $\cO$ be an orbifold.  Let $\cM$ be a manifold, and let $\cY \subset \cM$ be a closed embedded submanifold.
	Consider a smooth map $f: \cO \to \cM$ with an associated smooth functor
		$\hat{f} : (O,\bm{O}) \to (M,\bm{M}).$
	We say that $f$ is \textbf{transverse} to $\cY$, written symbolically as $f\pitchfork \cY$, if for every object $x\in \hat{f}^{-1}(Y)\subset O$
		\[
		D\hat{f}_x (T_x O) \oplus T_{\hat{f}(x)} Y = T_{\hat{f}(x)} M.
		\]
\end{definition}

\begin{theorem}\label{thm:perturb-submanifold}
	Consider the above situation of an orbifold $\cO$, a manifold $\cM$, a compact embedded submanifold $\cY \subset \cM$, and a smooth map $f:\cO \to \cM$.
	
	We may perturb $\cY$ so that $f\pitchfork \cY$.  Stated formally, the inclusion map $i: \cY \hookrightarrow \cM$ is homotopic to a smooth inclusion map $i': \cY \hookrightarrow \cM$ such that $f \pitchfork i'(\cY)$.
	In addition, if we assume that $f^{-1}(i(\cY))$ is compact then we may assume that $f^{-1}(i'(\cY))$ is also compact.
\end{theorem}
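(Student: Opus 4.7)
The idea is to run a classical parametric transversality argument at the object level. Because $\cM$ is an ordinary manifold, its orbifold structure $(M,\bm{M})$ is trivial, so everything reduces to applying Sard's theorem on the finite-dimensional object manifold $O$, with no morphism-invariance constraints.

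Using compactness of $\cY$, I would first construct a smooth family of ambient diffeomorphisms $\phi_v : \cM \to \cM$ parametrized by $v$ in a small ball $B \subset \R^N$ with $N = \dim \cM$, such that $\phi_0 = \id$ and, for every $y \in \cY$, the evaluation $v \mapsto \phi_v(y)$ is a submersion at $v = 0$. The standard construction: pick smooth vector fields $X_1,\dots,X_N$ on $\cM$, compactly supported in a neighborhood of $\cY$, whose values span $T_y\cM$ at every $y \in \cY$ (via a partition of unity), and take $\phi_v$ to be the time-$1$ flow of $\sum_i v_i X_i$.

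Now consider the smooth functor
\[
\hat{F} : B \times (O,\bm{O}) \to (M,\bm{M}), \qquad (v,x) \mapsto \phi_v^{-1}(\hat{f}(x)),
\]
which is trivial on morphisms. Shrinking $B$ if necessary, the submersiveness of the family persists on a neighborhood of $\cY$, so $D\hat{F}$ is surjective onto $T_{\hat{F}(v,x)}M$ at every $(v,x) \in \hat{F}^{-1}(Y)$; in particular $\hat{F} \pitchfork Y$, making $\hat{F}^{-1}(Y) \subset B \times O$ a smooth submanifold by the implicit function theorem. Applying classical Sard to the smooth projection $\hat{F}^{-1}(Y) \to B$ produces a dense set of regular values $v$, and for any such $v$ the slice $\hat{F}_v^{-1}(Y) = \hat{f}^{-1}(\phi_v(Y))$ is a submanifold of $O$---exactly the condition $f \pitchfork \phi_v(\cY)$. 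Setting $i' := \phi_v \circ i$ with homotopy $i_t := \phi_{tv} \circ i$ connects $i$ to $i'$ through smooth inclusions.

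For the compactness assertion, I would exploit that $\phi_v$ can be made arbitrarily $C^0$-close to $\id$ and is supported in a compact neighborhood of $\cY$. Choose a compact neighborhood $K \subset \cO$ of $f^{-1}(\cY)$ by local compactness of $\cO$, and note that $f(\partial K)$ is compact and disjoint from $\cY$; for $v$ small, $\phi_v(\cY)$ avoids $f(\partial K)$, so $f^{-1}(\phi_v(\cY))$ does not cross $\partial K$. A persistence argument via the implicit function theorem at points of the compact set $f^{-1}(\cY)$ then confines the perturbed preimage inside $K$. The main obstacle I anticipate is precisely this last step---since $f$ need not be proper, translating compactness of the original preimage into compactness of the perturbed one requires care in the choice of $K$ and the size of $v$. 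Everything else is a routine transcription of the classical manifold argument to the object level, which is permissible because $(M,\bm{M})$ is trivial.
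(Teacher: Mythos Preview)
Your transversality argument is correct and runs along the same lines as the paper's: both set up a parametric family of perturbations of $\cY$ (you via ambient diffeomorphisms $\phi_v$, the paper via a submersion $I:\cY\times B^N\to\cM$), reduce to classical Sard on finite-dimensional object manifolds, and extract a regular parameter. The paper packages this slightly differently---it covers $\cO$ by countably many local uniformizers $U_k$ and checks that the maps $\hat f|_{U_k}\times I$ are transverse to the diagonal $\Delta\subset\cM\times\cM$---but this is the same idea. One small point: applying Sard directly to $\hat F^{-1}(Y)\to B$ presumes $O$ is second countable; the paper's countable cover handles this explicitly, while in your version you should note that one may pass to a Morita equivalent groupoid with second countable object space.

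Your instinct that the compactness assertion is the real issue is correct, and your proposed fix has a gap. From $\phi_v(\cY)\cap f(\partial K)=\emptyset$ you only get $f^{-1}(\phi_v(\cY))\cap\partial K=\emptyset$, which does not force the perturbed preimage into $K$: since $f$ is not assumed proper, new preimage components can appear in $\cO\setminus K$ for arbitrarily small $v$, and the implicit function theorem near $f^{-1}(\cY)$ says nothing about those. The paper's proof in fact does not address the compactness claim at all. In the only place the theorem is invoked (the proof of Theorem~\ref{thm:steenrod-problem}~\ref{thm:steenrod-ii}), the relevant map $g:\cO'\to D_\ep$ is \emph{proper}, which immediately gives compactness of $g^{-1}$ of any compact set and makes the issue disappear; that is almost certainly the intended hypothesis.
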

\begin{proof}
	Choose a cover of the underlying topological space $\cO$ by countably many open sets $\abs{U_k}$ where $U_k \subset O$ are local uniformizers.
	Observe that the property $f\pitchfork \cY$ is equivalent to the property that the collection of smooth maps
		\[\hat{f}|_{U_k} \times i : U_k \times \cY \to \cM\times \cM\]
	are transverse to the diagonal $\Delta \subset \cM \times \cM$.
	
	There exists a smooth family of maps $I: \cY \times B^N  \to \cM$ for some $N$, which is a submersion and such that $I(\cdot,0) = i(\cdot)$. It then follows that the collection of smooth maps
		\[
		\hat{f}|_{U_k} \times I : U_k \times \cY \times B^N \to \cM \times \cM
		\]
	are transverse to the diagonal $\Delta$. So, choose a regular $s\in B^N$ and define $i'(\cdot) := I(\cdot,s)$.
\end{proof}

\begin{theorem}\label{thm:transverse-implies-suborbifold}
	When $f\pitchfork \cY$ the underlying set $f^{-1}(\cY)\subset \cM$ naturally has the structure of an embedded full suborbifold of codimension
		\[
		\codim f^{-1} (\cY) = \codim \cY.
		\]
	Moreover, the normal bundle of $f^{-1}(\cY)$ has fiberwise trivial isotropy action.
\end{theorem}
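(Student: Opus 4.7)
The approach is to construct the suborbifold structure on the preimage directly at the level of ep-groupoids, then verify the conditions of Definition~\ref{def:embedded-full-suborbifold} via an equivariant version of the classical preimage theorem. Set $X := \hat{f}^{-1}(Y) \subset O$ and let $(X,\bm{X})$ be the full subcategory of $(O,\bm{O})$ on these objects. Saturation is automatic: any morphism $\phi: x \to x'$ in $\bm{O}$ is sent by the functor $\hat{f}$ to a morphism in $(M,\bm{M})$, which must be the identity $\id_{\hat{f}(x)}$ since $(M,\bm{M})$ is trivial; hence $\hat{f}(x) = \hat{f}(x')$ and $x' \in X$ whenever $x \in X$. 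This gives condition~(\ref{def:suborbifold-condition-1}) of Definition~\ref{def:embedded-full-suborbifold}.

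The substantive step is the local straightening demanded by condition~(\ref{def:suborbifold-condition-2}). Fix $x \in X$ together with a local uniformizer $U$ centered at $x$ carrying the natural representation $\Phi: \bm{G}(x) \to \text{Diff}(U)$ from Proposition~\ref{prop:natural-representation}. Functoriality of $\hat{f}$ combined with triviality of $(M,\bm{M})$ forces $\hat{f}|_U: U \to M$ to be $\bm{G}(x)$-invariant (with $\bm{G}(x)$ acting trivially on $M$). The transversality hypothesis then feeds the classical preimage theorem on the manifold $U$, showing that $\hat{f}|_U^{-1}(Y)$ is a submanifold of $U$ of codimension $\codim \cY$ passing through $x$. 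To present this submanifold as a coordinate plane in a $\bm{G}(x)$-equivariant chart, I would average an arbitrary Riemannian metric on $U$ over $\bm{G}(x)$ to obtain an invariant metric, use this to split $T_xU = V \oplus V^\perp$ equivariantly with $V := T_x(\hat{f}|_U^{-1}(Y))$, and transport the splitting to a neighborhood via the invariant exponential map. In these coordinates $\hat{f}|_U^{-1}(Y)$ appears as the graph of a $\bm{G}(x)$-equivariant function $g: V \to V^\perp$ with $g(0)=0$ and $Dg(0)=0$; a final equivariant shear $(v,w) \mapsto (v, w - g(v))$ produces the desired chart.

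The normal bundle statement is then essentially free. The derivative $D\hat{f}_x$ induces a linear isomorphism $N_xX \simeq T_{\hat{f}(x)}M / T_{\hat{f}(x)}Y = N_{\hat{f}(x)}\cY$, and this isomorphism is $\bm{G}(x)$-equivariant because $\hat{f}$ intertwines the $\bm{G}(x)$-action with the trivial action on $M$. Since $\bm{G}(x)$ acts trivially on the right-hand side, it acts trivially on $N_xX$, which is the fiberwise trivial isotropy condition.

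The main obstacle in this plan is executing the equivariant straightening rigorously: one must verify that the output of the implicit function theorem can be made to intertwine the linear $\bm{G}(x)$-actions on $V$ and $V^\perp$, and that the shear is therefore equivariant. This reduces to checking that each step (averaging the metric, decomposing the tangent space, applying the exponential, graphing the submanifold) respects the linear action, but it should be recorded carefully since the absence of an equivariant chart would break the plane-intersection requirement of Definition~\ref{def:embedded-full-suborbifold}(\ref{def:suborbifold-condition-2}).
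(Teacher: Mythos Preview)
Your proposal is correct and follows essentially the same route as the paper: define the full subgroupoid on $X=\hat f^{-1}(Y)$, check saturation via functoriality and triviality of $(M,\bm{M})$, invoke the preimage theorem for condition~(\ref{def:suborbifold-condition-2}), and deduce fiberwise trivial isotropy from the $\bm{G}(x)$-equivariant isomorphism $N_xX\simeq N_{\hat f(x)}\cY$ onto a space with trivial action.

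One remark on your equivariant straightening: you are doing more work than the definition demands, and your stated ``main obstacle'' is not actually an obstacle. Condition~(\ref{def:suborbifold-condition-2}) of Definition~\ref{def:embedded-full-suborbifold} asks only for a chart $\phi:U\to\R^n$ defined on a $\bm{G}(x)$-invariant neighborhood $U$ in which $X\cap U$ becomes a coordinate plane; the chart itself is \emph{not} required to be $\bm{G}(x)$-equivariant. Since $\hat f|_U$ is $\bm{G}(x)$-invariant, the set $X\cap U=\hat f|_U^{-1}(Y)$ is automatically $\bm{G}(x)$-invariant, and the ordinary (non-equivariant) submersion/preimage theorem already furnishes a straightening chart on a possibly smaller neighborhood, which one can intersect with $U$. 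The paper accordingly dispatches condition~(\ref{def:suborbifold-condition-2}) in a single sentence. Your averaged-metric and equivariant-shear construction is valid and yields a stronger conclusion, but it is not needed here.
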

\begin{proof}
	We check the conditions of Definition~\ref{def:embedded-full-suborbifold}.
	Define a subcategory of $(O,\bm{O})$ as follows:
	\begin{itemize}
		\item define the object set by $X := \hat{f}^{-1}(Y)$,
		\item define the morphism set by $\bm{X} := \hat{f}^{-1}(\bm{Y})$.
	\end{itemize}
	It is immediate that $(X,\bm{X})$ is a full subcategory with saturated object set, hence condition \ref{def:suborbifold-condition-1} is satisfied.
	On the other hand, condition \ref{def:suborbifold-condition-2} follows from the assumption $f\pitchfork \cY$.
	
	Next, observe that the linearization of $\hat{f}$ induces a functor between the tangent bundles
		$
		D\hat{f}: (TO, \bm{TO}) \to (TM, \bm{TM}),
		$
	and moreover, this functor has a well-defined restriction to the normal bundle $(NX,\bm{NX})$ over the suborbifold $(X,\bm{X})$, such that the following diagram commutes:
		\begin{equation*}
			\begin{tikzcd}[column sep = large]
			(NX,\bm{NX}) \arrow[r, "D\hat{f}|_{(NX,\bm{NX})}"] \arrow[d]	& (TM|_Y, \bm{TM}|_{\bm{Y}}) \arrow[r, "\pr_{NY}"] \arrow[d]			& (NY, \bm{NY}) \arrow[d]\\
			(X,\bm{X})	\arrow[r, "\hat{f}|_{(X,\bm{X})}"] 					& (Y, \bm{Y}) \arrow[r,"\id"] & (Y, \bm{Y}).
			\end{tikzcd}
		\end{equation*}
	The transversality assumption implies that $\pr_{N_{\hat{f}(x)}Y} \circ D\hat{f}_x : N_x X \to N_{\hat{f}(x)} Y$ is an isomorphism. However, $N_{\hat{f}(x)} Y$ has trivial isotropy action, hence $N_x X$ must also necessarily have trivial isotropy action.
\end{proof}

\begin{remark}
	Similar observations to the previous theorem were made in \cite{borzellino2012elementary}.
	In our language, they consider a smooth map between orbifolds, $f: \cO \to \cP$
	and prove that the preimage $f^{-1}([y])$ of a regular value $[y] \in \cP$ has the structure of an embedded full suborbifold whose normal bundle has fiberwise trivial isotropy action.
\end{remark}

\subsection[Solving the Steenrod problem for orbifolds]{Solving the {S}teenrod problem for orbifolds}\label{subsec:solving-the-steenrod-problem}

Consider the classifying space $B_k$ with associated universal bundle $E_k$ which together are characterized by the following property: any oriented rank $k$ vector bundle $V$ over a CW-complex $A$ can be obtained as the pullback of $E_k$ via a continuous map to $B_k$, i.e., there exists a continuous map $f: A \to B_k$ such that $f^* E_k = V.$

If we restrict to CW-complexes of dimension less than $n$, we may assume that $B_k$ is the Grassmannian $\hat{G}_k (\R^m)$ for $m$ sufficiently large. The Grassmannian $\hat{G}_k (\R^m)$ consists of oriented real $k$-planes in $\R^m$. 
We may then assume that $E_k$ is the universal quotient bundle $Q$ over $\hat{G}_k(\R^m)$.
Finally recall that $\hat{G}_k (\R^m)$ is a finite-dimensional closed oriented manifold and $Q$ is a rank $k$ oriented vector bundle.

\begin{definition}
	Give $E_k$ a metric, and denote by $D_\ep \subset E_k$ the open $\ep$-disk bundle over $B_k$, consisting of vectors of length $< \ep$. The \textbf{Thom space} $Th(k)$ is defined as the one-point compactification of $D_\ep$, hence the Thom space has underlying set
		\[
		Th(k) := D_\ep \sqcup \{\infty\}.
		\]
\end{definition}

For $*>0$, the pushforward of the inclusion map, $j_*:H^*_c(D_\ep;\Z) \to H^* (Th(k);\Z)$ is an isomorphism \cite[p.~29]{thom1954quelques}.
Without loss of generality we may assume the support of the Thom class of $B_k$ is contained in this $\ep$-disk bundle, i.e., $\supp u_{B_k} \subset D_\ep$. We denote its image under the pushforward of the inclusion by 
	\[U:= j_* u_{B_k}.\]

Via obstruction theory, it is possible to define maps from ($q$-cell approximations of) the Eilenberg--MacLane spaces to the Thom spaces such that the pullback of the class $U$ is a nonzero integer multiple of the distinguished element of an Eilenberg--MacLane space.
Using these techniques, the Thom spaces can be treated as classifying spaces with respect to homology, as the following theorem demonstrates.

\begin{theorem}[{\cite[Thm.~II.25]{thom1954quelques}}] \label{thm:thom-em-space}
	Let $A$ be a CW-complex of finite dimension $n$, and consider a cohomology class $x\in H^k(A;\Z)$.
	There exists a positive nonzero integer $N$ (depending only on $k$ and $n$) such that the following holds.
	There exists a continuous map 
		\[f: A \to Th(k)\]
	such that the pullback $f^*: H^*(Th(k);\Z) \to H^*(A;\Z)$ satisfies the following:
		\[f^*U = N \cdot x.\]
\end{theorem}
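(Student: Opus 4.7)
My plan is to translate the cohomological identity $f^*U = N\cdot x$ into a homotopy-theoretic lifting problem and resolve it by obstruction theory. Using that singular cohomology is representable by Eilenberg--MacLane spaces, $x\in H^k(A;\Z)$ is represented by a continuous map $h:A\to K(\Z,k)$ and the Thom class $U\in H^k(Th(k);\Z)$ is represented by a map $g:Th(k)\to K(\Z,k)$. The identity $f^*U = N\cdot x$ is then equivalent to $g\circ f \simeq N\cdot h$, so it suffices to find, for some integer $N=N(k,n)>0$, a lift $f:A\to Th(k)$ of $N\cdot h$ along $g$.

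To analyze this lifting problem, let $F$ denote the homotopy fiber of $g$. Both $Th(k)$ and $K(\Z,k)$ are $(k-1)$-connected, and the restriction of $U$ to the fiber sphere $S^k\hookrightarrow Th(k)$ over the basepoint of $B_k$ generates $H^k(S^k;\Z)$, so $g_*:\pi_k(Th(k))\xrightarrow{\sim}\pi_k(K(\Z,k))=\Z$. The long exact homotopy sequence of $F\to Th(k)\to K(\Z,k)$, together with $\pi_q(K(\Z,k))=0$ for $q\ne k$, then gives $\pi_q(F)=0$ for $q\le k$ and $\pi_q(F)\cong \pi_q(Th(k))$ for $q>k$. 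Classical obstruction theory produces, when extending a partial lift cell by cell over $A$, successive obstructions to extending from the $q$-skeleton to the $(q+1)$-skeleton lying in $H^{q+1}(A;\pi_q(F))$; only finitely many potentially non-zero obstructions appear, one for each $q$ with $k<q\le n-1$.

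It remains to kill these obstructions after multiplying $h$ by an appropriate $N$. Choosing $B_k = \hat{G}_k(\R^m)$ with $m$ large enough to classify oriented rank $k$ bundles on CW complexes of dimension $\le n$ makes $Th(k)$ a $(k-1)$-connected finite CW complex, so Serre's finiteness theorem implies that the groups $\pi_q(F)\cong \pi_q(Th(k))$ for $k<q\le n-1$ are all finitely generated. The obstructions for $N\cdot h$ equal $N$ times the obstructions for $h$ by additivity in the Eilenberg--MacLane target, so the torsion parts of the obstructions can be killed by choosing $N$ divisible by the orders of those torsion classes. I expect the main obstacle to be controlling the \emph{free} parts of $\pi_q(F)$ outside Serre's stable range $q<2k-1$, where integral obstructions need not be torsion; this requires an explicit rational comparison between $Th(k)$ and $K(\Z,k)$ via the Thom isomorphism $H^*(Th(k);\Q)\cong H^{*-k}(B_k;\Q)$, exploiting the known rational cohomology of the Grassmannian $B_k$, to construct rational null-homotopies of the obstruction cocycles whose denominators are absorbed into the final $N=N(k,n)$.
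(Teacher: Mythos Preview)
The paper does not itself prove this result; it is quoted from Thom with only the remark that one constructs, via obstruction theory, a map from the $n$-skeleton of $K(\Z,k)$ into $Th(k)$ which pulls $U$ back to $N\iota_k$. Once such a map $\phi:K(\Z,k)^{(n)}\to Th(k)$ exists, cellular approximation factors any $h:A\to K(\Z,k)$ through $K(\Z,k)^{(n)}$ and one sets $f=\phi\circ h$; the integer $N$ then visibly depends only on $k$ and $n$.

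Your approach is dual---you attempt to lift each individual $N\cdot h$ through $g:Th(k)\to K(\Z,k)$---and has a real gap. The assertion that ``the obstructions for $N\cdot h$ equal $N$ times the obstructions for $h$'' is not justified: $Th(k)$ is not an H-space compatibly with $K(\Z,k)$, so lifts cannot be added, and while the \emph{primary} obstruction is $h^*$ of a universal class on $K(\Z,k)$, the higher obstructions are secondary operations depending on the partial lift already chosen and carry no such linearity. Without this, your argument produces an $N$ depending on $A$ and $x$, not only on $k$ and $n$, and your sketch for handling the free summands of $\pi_q(F)\cong\pi_q(Th(k))$---which are genuinely present, e.g.\ via the Pontryagin--Thom isomorphism $\pi_{k+q}(Th(k))\cong\Omega^{\mathrm{SO}}_q$ in the stable range---remains only a plan. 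The fix is precisely Thom's manoeuvre: specialise your lifting problem once and for all to the universal case $A=K(\Z,k)^{(n)}$, $h=\mathrm{incl}$, where precomposition with the multiplication-by-$m$ self-map of $K(\Z,k)$ is available to act directly on the obstruction classes in $H^{q+1}(K(\Z,k);\pi_q(Th(k)))$.
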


We are now able to solve the Steenrod problem for closed orientable orbifolds. Our approach is entirely due to Thom, with modifications made as necessary to deal with the more general nature of orbifolds as opposed to manifolds.

\begin{theorem}\label{thm:steenrod-problem}
	Let $\cO$ be a closed oriented orbifold of dimension $n$.
	\begin{enumerate}[label = \Roman*.]
		\item \label{thm:steenrod-i} Suppose that $\cX\subset \cO$ is a closed oriented embedded full suborbifold of codimension $k$ whose normal bundle $N\cX$ has fiberwise trivial isotropy action. Let $[\cX]\in H_{n-k}(\cO;\Q)$ denote its rational fundamental class.
		Then there exists a continuous map
			\[\tilde{F}: \cO \to Th(k)\]
		such that
			\[\tilde{F}^*U = \PD ([\cX]).\]
		\item \label{thm:steenrod-ii} Consider a rational homology class $x\in H_{n-k}(\cO;\Q)$. Then there exists a positive nonzero integer $N$ such that $N\cdot x$ is the rational fundamental class of a closed oriented embedded full suborbifold $\cX \subset \cO$ of codimension $k$, i.e.,
			\[
			[\cX] = N \cdot x,
			\]
		and moreover, such that the normal bundle $N\cX$ has fiberwise trivial isotropy action.
	\end{enumerate}
\end{theorem}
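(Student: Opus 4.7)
For part \ref{thm:steenrod-i}, I would implement the classical Pontryagin--Thom collapse in the orbifold setting. Because $N\cX$ has fiberwise trivial isotropy action, Remark~\ref{rmk:orbit-space-normal-bundle} tells us that its underlying topological space is a genuine oriented rank-$k$ vector bundle over $\cX$, so there is an honest classifying map $g: \cX \to B_k$ together with a bundle map $G: N\cX \to E_k$ covering $g$ and pulling the universal Thom class back to the Thom class $\tau$ of $N\cX$. Composing $G$ with the tubular neighborhood embedding $i: N_\ep\cX \hookrightarrow \cO$ of \S~\ref{subsec:orbifolds-ep-groupoids}, and collapsing the complement of a slightly smaller open tube to the point at infinity, defines a continuous map $\tilde F: \cO \to Th(k)$. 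Naturality of the Thom class then gives $\tilde F^* U = i_*\tau$, which by Proposition~\ref{prop:thom-class-poincare-dual} equals $\PD([\cX])$.

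For part \ref{thm:steenrod-ii}, the plan is to reverse the construction: produce a map $\cO \to Th(k)$ first and extract $\cX$ as a transverse preimage. Given $x \in H_{n-k}(\cO;\Q)$, rational Poincar\'e duality together with the universal coefficient comparison $H^k(\cO;\Z) \otimes \Q \simeq H^k(\cO;\Q)$ furnish a positive integer $M$ and an integral cohomology class $\alpha \in H^k(\cO;\Z)$ representing $\PD(M\cdot x)$. Because orbifolds are triangulable, $\cO$ has the homotopy type of a finite $n$-dimensional CW-complex, so Theorem~\ref{thm:thom-em-space} produces a continuous map $f: \cO \to Th(k)$ and a positive integer $N'$ satisfying $f^* U = N'\cdot \alpha$, which in rational cohomology equals $\PD(N' M \cdot x)$.

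To extract the suborbifold, I would first smooth $f$: its restriction to the open saturated subset $f^{-1}(Th(k)\setminus\{\infty\})$ maps into the manifold $D_\ep$, so Theorem~\ref{thm:whitney-approximation} applies there to yield a homotopic smooth functor, glued with $f$ on a neighborhood of $f^{-1}(\infty)$. Calling the result $\tilde F$, Theorem~\ref{thm:perturb-submanifold} then allows us to homotope the inclusion of the compact closed submanifold $B_k \subset Th(k)\setminus\{\infty\}$ to an embedding $i'$ with $\tilde F \pitchfork i'(B_k)$, without changing the cohomology class of its Poincar\'e dual. By Theorem~\ref{thm:transverse-implies-suborbifold}, $\cX := \tilde F^{-1}(i'(B_k))$ is a closed embedded full suborbifold of codimension $k$ whose normal bundle has fiberwise trivial isotropy action, canonically oriented by the orientations of $B_k$ and $\cO$. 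On a tubular neighborhood of $\cX$ the transverse map $\tilde F$ realizes the classifying-map construction of part \ref{thm:steenrod-i}, so $\tilde F^* U = \PD([\cX])$. Combining this with the expression for $\tilde F^* U$ in rational cohomology coming from part II yields $[\cX] = N\cdot x$ with $N = N' M$.

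The main delicacy lies in the combined smoothing-and-transversality step of part \ref{thm:steenrod-ii}, because Theorem~\ref{thm:whitney-approximation} is stated only for maps into a manifold while $Th(k)$ is singular at infinity. I would handle this by running the approximation only on the saturated open preimage of $Th(k)\setminus\{\infty\}$, choosing the approximating function $\delta$ small enough (and vanishing as one approaches $f^{-1}(\infty)$) to glue continuously with $f$ on a neighborhood of $f^{-1}(\infty)$; the resulting map is then homotopic to $f$, so $\tilde F^* U = f^* U$ throughout the argument. Everything else is a direct orbifold adaptation of Thom's classical manifold argument.
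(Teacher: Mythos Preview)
Your proposal follows the paper's argument closely. Part~\ref{thm:steenrod-i} is essentially identical: both you and the paper run the Pontryagin--Thom collapse using that the underlying space of $N\cX$ is a genuine oriented vector bundle, classify it, and invoke Proposition~\ref{prop:thom-class-poincare-dual}. Your extra care in Part~\ref{thm:steenrod-ii} about clearing denominators to land in integral cohomology before applying Theorem~\ref{thm:thom-em-space} is a point the paper leaves implicit.

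The one place you diverge is the smoothing step in Part~\ref{thm:steenrod-ii}, and here the paper's route is cleaner than yours. You try to extend the smoothed map back to a global continuous map $\tilde F:\cO\to Th(k)$ by letting $\delta\to 0$ near $f^{-1}(\infty)$ and gluing; you correctly flag this as the delicate step, and indeed making the homotopy and the gluing rigorous in $Th(k)$ takes work. The paper avoids this entirely: it never extends back to $\cO$. It sets $\cO':=\cO\setminus f^{-1}(\infty)$, observes that the restriction $f|_{\cO'}:\cO'\to D_\ep$ is \emph{proper}, and applies Theorem~\ref{thm:whitney-approximation} to get a smooth \emph{proper} approximation $g:\cO'\to D_\ep$. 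Properness guarantees that $\cX:=g^{-1}(B_k)$ is compact, and the identification $\PD([\cX])=f^*U$ is then carried out via the commutative square
\[
\begin{tikzcd}
H^k(\cO;\Z) & H^k(Th(k);\Z) \arrow[l,"f^*"'] \\
H^k_c(\cO';\Z) \arrow[u,"i_*"] & H^k_c(D_\ep;\Z) \arrow[l,"g^*"'] \arrow[u,"j_*"']
\end{tikzcd}
\]
using that $g$ and $f|_{\cO'}$ are (properly) homotopic. Working with compactly supported cohomology on the open piece rather than trying to reassemble a map into the singular space $Th(k)$ is what makes the delicacy you identified disappear.
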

\begin{proof}[Proof of \ref{thm:steenrod-i}\nopunct] 
In Remark~\ref{rmk:orbit-space-normal-bundle} we observed that underlying topological space $N\cX$ is a vector bundle over the (paracompact) topological space $\cX$. 
Moreover, given orientations on $\cO$ and $\cX$ the bundle $N\cX$ carries an induced orientation.
Hence, there exists a continuous map to the classifying space, $f: \cX \to B_k$, such that $f^*E_k = N\cX$.  We have the following pullback diagram,
	\begin{equation*}
		\begin{tikzcd}
		N\cX \arrow[r, "F"] \arrow[d] & E_k \arrow[d] \\
		\cX \arrow[r, "f"] & B_k,
		\end{tikzcd}
	\end{equation*}
and furthermore, the Thom classes 
	\[u_\cX \in H^k_c(N\cX;\Z), \qquad u_{B_k} \in H^k_c(E_k;\Z) \]
are related by pullback, $F^* u_{B_k} = u_\cX.$

We now replace $E_k$ with the open $\ep$-disk bundle $D_\ep$. 
Consider the open set $N_\ep \cX :=F^{-1}(D_\ep)\subset N\cX$. We may assume that $\supp u_\cX \subset N_\ep \cX$ and $\supp u_{B_k} \subset D_\ep$. For $\ep$ sufficiently small we have inclusion maps 
$i: N_\ep \cX \hookrightarrow \cO$, $j: D_\ep \hookrightarrow Th(k)$ and pushforwards
	\begin{align*}
	i_*:& H^k_c(N_\ep \cX;\Z) \to H^k(\cO;\Z),\\
	j_*:& H^k_c(D_\ep ;\Z) \to H^k(Th(k);\Z).
	\end{align*}
We may extend the restricted map $F: N_\ep \cX \to D_\ep$ to a continuous map $\tilde{F}: \cO \to Th(k)$ as follows:
\begin{itemize}
	\item on $N_\ep \cX$, define $\tilde{F}$ by $F : N_\ep \cX \to D_\ep$,
	\item map all points in $\cO \setminus N_\ep \cX$ to $\{\infty\} \in Th(k)$.
\end{itemize}
We therefore have the following commutative diagram:
\begin{equation*}
	\begin{tikzcd}
	H^k(\cO; \Z) & H^k (Th(k); \Z) \arrow[l, "\tilde{F}^*"] \\
	H^k_c (N_\ep \cX; \Z) \arrow[u, "i_*"] & H^k_c(D_\ep; \Z) \arrow[u, "j_*"] \arrow[l, "F^*"].
	\end{tikzcd}
\end{equation*}
It follows that $\tilde{F}^* U = \tilde{F}^* j_* u_{B_k} = i_* F^* u_{B_k} = i_* u_\cX$.
Switching to de Rahm cohomology, by Proposition~\ref{prop:thom-class-poincare-dual} we see that the Poincar\'e dual of a closed embedded full suborbifold equals the pushforward of the Thom class of its normal bundle, i.e., $\PD([\cX]) = i_* u_\cX$ and therefore
	\[
	\tilde{F}^* U = \PD ([\cX]).
	\] 
The statement for rational coefficients follows from the isomorphism $H_*(\cO;\Q) \allowbreak \otimes \R \simeq H_*(\cO;\R)$.
\end{proof}

\begin{proof}[Proof of \ref{thm:steenrod-ii}\nopunct]
By Theorem~\ref{thm:thom-em-space}, there exists a positive nonzero integer $N$ (depending on only $k=\deg \PD(x)$ and $n= \dim \cO$) such that there exists a continuous map 
	\[f: \cO \to Th(k)\]
such that
	\[f^*U = N \cdot \PD (x).\]

Note that $f^{-1} (B_k) \subset \cO$ is a compact subset (it is a closed subset of the compact topological space $\cO$).
Let $\cO' := \cO \setminus f^{-1} (\infty) $; it is an open subset of $\cO$ and hence carries an induced orbifold structure $(O',\bm{O'})$.

Consider the continuous restricted map
	\[
	f|_{\cO'} : \cO'  \to D_\ep.
	\]
Observe that since $f$ is proper, the restriction $f|_{\cO'}$ is also proper.
By our Whitney approximation theorem~\ref{thm:whitney-approximation} there exists a smooth functor 
	\[
	\hat{g} : (O', \bm{O'}) \to (D_\ep, \bm{D}_\ep)
	\]
such that the induced continuous map $g = \abs{\hat{g}}: \cO' \to D_\ep$ is homotopic to $f|_{\cO'}$, and moreover, such that $g$ is proper. 

By Theorem~\ref{thm:perturb-submanifold}, we may perturb the submanifold $B_k\subset D_\ep$ so that it is transverse to this smooth functor, hence without loss of generality we may assume that $g \pitchfork B_k$.
By Theorem~\ref{thm:transverse-implies-suborbifold}, the set 
	\[
	\cX : = g^{-1} (B_k) \subset \cO'
	\]
has the structure of a closed embedded full suborbifold $(X, \bm{X})$ of codimension $k$ whose normal bundle has fiberwise trivial isotropy action.

Observe that the Thom classes
	\[
	u_\cX \in H^k_c(\cO';\Z), \qquad u_{B_k} \in H^k_c(D_\ep;\Z)
	\]
are related by pullback, $u_\cX = g^* u_{B_k}$. Since $g$ and $f|_{\cO'}$ are homotopic, we have the following commutative diagram
\begin{equation*}
	\begin{tikzcd}
	H^k(\cO; \Z)	& H^k (Th(k) ;\Z) \arrow[l, "f^*"]	\\
	H^k_c(\cO'; \Z) \arrow[u, "i_*"]	& H^k_c( D_\ep ; \Z ) \arrow[l,"g^*"] \arrow[u, "j_*"]
	\end{tikzcd}
\end{equation*}
and therefore
	\[
	\PD ([\cX]) = i_* (u_\cX) = i_* g^* (u_{B_k}) = f^*j_*(u_{B_k}) = f^* U = N \cdot \PD(x).
	\]
Hence $[\cX] = N \cdot x$, as desired.
\end{proof}


\section{Defining polyfold invariants}\label{sec:defining-polyfold-invariants}

Over the past two decades, Hofer, Wysocki, and Zehnder have developed a new approach to
resolving transversality issues that arise in the study of $J$-holomorphic curves in symplectic
geometry called polyfold theory \cites{HWZ1, HWZ2, HWZ3, HWZint, HWZsc, HWZbook}. This approach has been successful in constructing a well-defined Gromov--Witten invariant \cite{HWZGW}.
For a survey of some of the core ideas of polyfold theory, we refer to \cite{ffgw2016polyfoldsfirstandsecondlook}.

\subsection{Polyfolds and weighted branched suborbifolds}\label{subsec:weighted-branched-suborbifolds}

In broad terms, a ``polyfold'' may be viewed as a generalization of an (usually infinite-dimensional) orbifold.
According to the previous mantra, considered as a topological space a {polyfold} is locally homeomorphic to the quotient of an open set of an ``$\ssc$-retract'' by a finite group action.  In formal terms, we offer the following definition.

\begin{definition}[{\cite[Defs.~16.1,~16.3]{HWZbook}}]
	A \textbf{polyfold structure} on a second countable, paracompact, Hausdorff topological space $\cZ$ consists of:
	\begin{itemize}
		\item an ep-groupoid $(Z,\bm{Z})$, whose object and morphism spaces are both M-polyfolds and where the \'etale condition now requires that the source and target morphisms are surjective local $\ssc$-diffeomorphisms,
		\item a homeomorphism $\abs{Z} \simeq \cZ$.
	\end{itemize}
	A \textbf{polyfold} consists of a second countable, paracompact, Hausdorff topological space $\cZ$ together with a Morita equivalence class of polyfold structures $[(Z,\bm{Z})]$ on $\cZ$.
\end{definition}

We will always assume that polyfolds admit $\ssc$-smooth partitions of unity (see \cite[\S~7.5.2]{HWZbook} for further details on this assumption). This assumption is necessary for the proof of Stokes' theorem~\ref{thm:stokes}.

In the present context we will not discuss polyfolds in depth, but instead treat them as ambient topological spaces in which the current objects of study---the weighted branched suborbifolds---sit as subsets.

View $\Q^+:= \Q \cap [0,\infty)$ as an ep-groupoid, having only the identities as morphisms. 
Consider a polyfold, consisting of a polyfold structure $(Z,\bm{Z})$ and an underlying topological space $\cZ$.
Consider a functor $\hat{\theta}: (Z,\bm{Z}) \to \Q^+$ which induces the function $\theta:=\abs{\hat{\theta}} :\cZ \to \Q^+$.
Observe that $\hat{\theta}$ defines a subgroupoid $(S,\bm{S})\subset (Z,\bm{Z})$ with object set
	\[
	S:= \supp (\hat{\theta}) = \{x\in Z\mid \hat{\theta}(x)>0 \}
	\]
and with underlying topological space
	\[
	\cS := \supp (\theta) = \{[x]\in \cZ \mid \theta([x])>0\}.
	\]
Moreover, $(S,\bm{S})$ is a full subcategory of $(Z,\bm{Z})$ whose object set is saturated, i.e., $S= \pi^{-1} (\pi(S))$ where $\pi : Z \to \abs{Z}, x\mapsto [x]$.

\begin{definition}[{\cite[Def.~9.1]{HWZbook}}]\label{def:weighed-branched-suborbifold}
	A \textbf{weighted branched suborbifold structure} consists of a subgroupoid $(S,\bm{S}) \subset (Z,\bm{Z})$ defined by a functor $\hat{\theta} : (Z,\bm{Z}) \to \Q^+$ as above which satisfies the following properties.
	\begin{enumerate}
		\item $\cS \subset \cZ_\infty$.
		\item Given an object $x\in S$, there exists an open neighborhood $U\subset Z$ of $x$ and a finite collection $M_i$, $i\in I$ of finite-dimensional submanifolds of $Z$ (in the sense of \cite[Def.~4.19]{HWZ2}) such that
			\[
			S \cap U= \bigcup_{i \in I}M_i.
			\]
		We require that the submanifolds $M_i$ all have the same dimension. 
		The submanifolds $M_i$ are called \textbf{local branches} in $U$. \label{def:local-branches}
		\item There exist positive rational numbers $w_i$, $i\in I$, (called \textbf{weights}) such that if $y\in S \cap U$, then
		\[\hat{\theta}(y)=\sum_{\{i \in I \mid   y\in M_i\}} w_i.\]
		\item The inclusion maps $\phi_i: M_i\to U$ are proper.
	\end{enumerate}
	We call ${(M_i)}_{i\in I}$ and ${(w_i)}_{i\in I}$ a \textbf{local branching structure}.
\end{definition}

By shrinking the open set $U$ we may assume that the local branches $M_i$ (equipped with the subspace topology induced from $U$) are homeomorphic to open subsets of $\R^n$.  Hence we may assume that a local branch is given by a subset $M_i\subset\R^n$ and an inclusion map $\phi_i : M_i\to U$ where $\phi_i$ is proper and a homeomorphism onto its image.

\begin{definition}\label{def:local-orientation}
	Let $(S,\bm{S})$ be a weighted branched suborbifold structure. Consider an object $x\in S$ and a local branching structure $(M_i)_{i\in I}$, $(w_i)_{i\in I}$ at $x$.
	Suppose moreover that each local branch has an {orientation}, denoted as $(M_i,o_i)$
	
	We define a \textbf{local orientation} at $x$ 
	as the following finite formal sum of weighted oriented tangent planes:
		\[
		\sum_{\{i\in I \mid x\in M_i\}} w_i \cdot T_x (M_i,o_i).
		\]
	We require that this sum is independent of the choice of local branching structure.
	
	An \textbf{orientation} on $(S,\bm{S})$ is defined as a morphism invariant choice of local orientation at every object $x \in S$.
	Explicitly, given a morphism $\phi : x \to y$ there exists a well-defined tangent map $T\phi : T_xZ \to T_yZ$.
	The image of a finite formal sum of weighted oriented tangent planes under this map is again a finite formal sum of weighted oriented tangent planes, and hence we require invariance of the local orientations in the following sense:
	\[
	\sum_{\{j\in I' \mid y\in M'_j\}} w'_j \cdot T_y (M'_j,o'_j) = \sum_{\{i\in I \mid x\in M_i\}} w_i \cdot T\phi_* (T_x (M_i,o_i)).
	\]
\end{definition}

A \textbf{weighted branched suborbifold structure with boundary} consists of a subgroupoid $(S,\bm{S})\subset (Z,\bm{Z})$ defined identically to Definition~\ref{def:weighed-branched-suborbifold} except we allow the possibility that the local branches are manifolds with boundary.
A \textbf{local orientation} at an object $x\in S$ is again defined as in Definition~\ref{def:local-orientation} as a finite formal sum determined by orientations of the local branches, and likewise an \textbf{orientation} is a morphism invariant choice of local orientations.

\subsection{Polyfold invariants as branched integrals}\label{subsec:branched-integrals}

We recall the branched integration theory on compact oriented weighted branched suborbifolds, as originally developed in \cite{HWZint}.

\begin{definition}[{\cite[Def.~4.9]{HWZbook}}]
Let $\cZ$ be a polyfold with an associated polyfold structure $(Z,\bm{Z})$.
The vector space $\Omega^k (Z)$ consisting of $\ssc$-differential $k$-forms 
	\[
	\ww:\bigoplus^k_{n=1} TZ\rightarrow \R.
	\]
is the set of $\ssc$-smooth maps defined on the Whitney sum of the tangent of the object space, which are linear in each argument and skew-symmetric.	
Moreover, we require that the maps $\ww$ are morphism invariant in the following sense: for every morphism $\phi: x\to y$ in $\bm{Z}_1$ with tangent map $T\phi:T_xZ\rightarrow T_yZ$ we require that
	\[
	(T\phi)^*\ww_y=\ww_x.
	\]
\end{definition}

Recall the definition of $\cZ^i$ as the shifted polyfold with shifted polyfold structure $(Z^i,\bm{Z}^i)$ (see \cite[pp.~21--22]{HWZbook}).
Via the inclusion maps $\cZ^i \hookrightarrow \cZ$ we may pullback a $\ssc$-differential $k$-form $\ww$ in $\Omega^k(Z)$ to $\Omega^k(Z^i)$, obtaining a directed system
	\[
	\Omega^k(Z) \to \cdots \to \Omega^k(Z^i) \to \Omega^k(Z^{i+1}) \to \cdots,
	\]
we denote by $\Omega^k_\infty (Z)$ the direct limit of this system.
As defined in \cite[p.~149]{HWZbook} there exists an \textbf{exterior derivative}
	\[
	d:\Omega^*(Z^{i+1}) \to \Omega^{* +1}(Z^i)
	\]
such that the composition $d\circ d = 0$.
The exterior derivative commutes with the inclusion maps $Z^i \hookrightarrow Z^{i+1}$ and hence induces a map
	\[
	d:\Omega^*_\infty(Z) \to \Omega^{* +1}_\infty(Z)
	\]
which also satisfies $d\circ d =0$.

\begin{theorem}[{\cite[Thm.~9.2]{HWZbook}}]
	\label{def:branched-integral}
	Let $\cZ$ be a polyfold with polyfold structure $(Z,\bm{Z})$.
	Given a $\ssc$-smooth differential form $\ww\in \Omega^n_\infty (Z)$ and an $n$-dimensional compact oriented weighted branched suborbifold $\cS\subset \cZ$.
	
	Then there exists a well-defined \textbf{branched integral}, denoted as
		$\int_{\cS} \ww,$
	which is partially characterized by the following property. 
	Consider a point $[x]\in \cS$ and a representative $x\in S$ with isotropy group $\bm{G}(x)$. Let $(M_i)_{i\in I}$, $(w_i)_{i\in I}$, $(o_i)_{i\in I}$ be an oriented local branching structure at $x$ contained in a $\bm{G}(x)$-invariant open neighborhood $U\subset Z$ of $x$.
	Consider a $\ssc$-smooth differential form $\ww\in \Omega^n_\infty (Z)$ and suppose that $\abs{\supp \ww} \subset \abs{U}$.
	Then
		\[
		\int_{\cS} \ww = \frac{1}{\sharp \bm{G}^\text{eff}(x)} \sum_{i\in I} w_i \int_{(M_i,o_i)} \ww,
		\]
	where $\sharp \bm{G}^\text{eff}(x)$ is the order of the effective isotropy group and $\int_{(M_i,o_i)} \ww$ is the usual integration of the differential $n$-form $\ww$ on the oriented $n$-dimensional manifold $M_i$.
\end{theorem}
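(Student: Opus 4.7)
The plan is to define the branched integral first for compactly supported forms inside a single local uniformizer by the explicit formula stated, then to globalize via a morphism invariant $\ssc$-smooth partition of unity, and finally to verify independence of all auxiliary choices.

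First, I would fix a point $[x]\in \cS$ with a representative $x\in S$, a $\bm{G}(x)$-invariant local uniformizer $U\subset Z$ centered at $x$, and an oriented local branching structure $(M_i, w_i, o_i)_{i\in I}$ on $U$. For a form $\ww\in\Omega^n_\infty(Z)$ with $\abs{\supp\ww}\subset\abs{U}$, I would take the formula
\[
\int_{\cS}\ww := \frac{1}{\sharp\bm{G}^{\text{eff}}(x)}\sum_{i\in I} w_i\int_{(M_i,o_i)}\ww
\]
as a tentative definition. Since each $M_i$ is a finite-dimensional oriented submanifold of $Z$ and the inclusion $\phi_i: M_i\to U$ is proper, $\supp(\phi_i^*\ww)$ is compact in $M_i$, so the ordinary integrals on the right are finite. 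The well-definedness of this local formula then requires checking two things: (a) independence of the choice of local branching structure at $x$, and (b) independence of the choice of representative $x$ in the orbit $[x]$, i.e., morphism invariance.

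For (a), two local branching structures $(M_i)$ and $(M'_j)$ on (possibly smaller) neighborhoods must agree on the support function $\hat\theta$; by refining both to their common intersection pieces and using the orientation compatibility built into Definition~\ref{def:local-orientation}, the weighted sums of integrals over the two refinements agree termwise. For (b), a morphism $\phi: x\to y$ induces a local $\ssc$-diffeomorphism of uniformizers by the \'etale condition, carrying one local branching structure to another; morphism invariance of $\ww$ together with the condition on local orientations from Definition~\ref{def:local-orientation} ensures the two formulas produce the same value. The noneffective subgroup acts trivially on the uniformizer and thus merely relabels identical summands, which is the reason only the effective group order appears in the prefactor. Summing the $\bm{G}(x)$-translates of a branching structure and dividing by $\sharp\bm{G}^{\text{eff}}(x)$ is the standard averaging that makes the formula $\bm{G}(x)$-invariant.

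To globalize, I would cover $\cS$ by finitely many sets of the form $\abs{U_\alpha}$ where $U_\alpha$ is a $\bm{G}(x_\alpha)$-invariant uniformizer carrying a local branching structure, using compactness of $\cS$. I would then invoke the existence of morphism invariant $\ssc$-smooth partitions of unity on ep-groupoids (the same tool used in Step 2 of the proof of Theorem~\ref{thm:whitney-approximation}) to obtain functions $\beta_\alpha$ subordinate to this cover whose pullbacks to $Z$ are morphism invariant. Each $\beta_\alpha\ww$ is supported in one uniformizer, so the local formula applies, and I would set
\[
\int_{\cS}\ww := \sum_\alpha \int_{\cS}\beta_\alpha\ww.
\]
Linearity of the local integrals combined with the partition property $\sum_\alpha \beta_\alpha \equiv 1$ on $\abs{\supp\ww}\cap\cS$ shows the result does not depend on the cover or on the partition: given two choices, one refines by $\{\beta_\alpha\beta'_\gamma\}$ and compares.

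The main obstacle is the well-definedness step (a) above: two local branching structures at $x$ need not be comparable by simple refinement, since weights may be distributed differently among overlapping branches. The resolution is to observe that the function $\hat\theta$ rigidly constrains the weights at each object, and that any two branching structures share a common refinement by intersecting their branches set-theoretically and redistributing weights accordingly; the additivity of the ordinary Riemann integral over such refinements, paired with the compatibility requirement in Definition~\ref{def:local-orientation}, then delivers equality. Once this is in hand, morphism invariance, partition-of-unity independence, and additivity in $\ww$ follow formally, completing the proof.
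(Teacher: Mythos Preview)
The paper does not actually prove this theorem: it is quoted verbatim as \cite[Thm.~9.2]{HWZbook} and no proof environment follows. What the paper does include is the subsequent Remark (``Computation of the branched integral''), which records exactly the globalization step you describe: cover $\cS$ by finitely many $\abs{U_{x_j}}$, take a morphism invariant $\ssc$-smooth partition of unity $\beta_0,\beta_1,\ldots,\beta_k$ subordinate to the saturated cover, and write
\[
\int_{\cS}\ww = \sum_{j=1}^k \frac{1}{\sharp \bm{G}^{\text{eff}}(x_j)} \sum_{i\in I_j} w_i \int_{(M_i,o_i)} \beta_j\cdot\ww.
\]
Your proposal reproduces this computation and additionally sketches the well-definedness arguments (independence of branching structure and of representative) that the paper simply delegates to \cite{HWZbook}. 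So your approach is correct and is the standard one; there is just nothing in the present paper to compare it against beyond the remark, which your globalization step matches.
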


\begin{remark}[Computation of the branched integral]
	The proof of \cite[Thm.~9.4]{HWZbook} shows how to compute the branched integral, relying on the assumption that the polyfold $\cZ$ admits $\ssc$-smooth partitions of unity.
	
	Cover the underlying compact topological space $\cS$ with finitely many open sets $\abs{U_{x_j}}\subset \cZ$, $j\in \{1,\ldots,k\}$, where $U_{x_j}\subset Z$ are $\bm{G}(x_j)$-invariant local uniformizers.  Moreover, suppose that $S\cap U_{x_j} = \cup_{i\in I_j} M_i$ where $(M_i,o_i)$ are oriented local branches with associated weights $w_i$.
	Denote by $U_{x_j}^*$ the saturations of the sets $U_{x_j}$ in $Z$, i.e., $U_{x_j}^* = \pi^{-1}(\pi(U_{x_j}))$.
	We may find a saturated open set $U^*_0$ such that
		\[Z= U^*_0 \cup U^*_{x_1} \cup \cdots \cup U^*_{x_k}, \qquad
		\cS \subset \bigcup_{j=1}^k	\abs{U^*_{x_j}\setminus \overline{U^*_0}}.\]
	The polyfold $\cZ$ admits $\ssc$-smooth partitions of unity, hence there exist morphism invariant $\ssc$-smooth functions $\beta_0,\beta_1,\ldots,\beta_k :Z \to [0,1]$ which satisfy $\supp \beta_0 \subset U^*_0$, $\supp \beta_j\subset U^*_{x_j}$.  It follows that $\sum_{j=1}^k \beta_j =1$ on $S$ and $\abs{\supp \beta_j} \subset \abs{U^*_{x_j}}$.
	
	We may now write
	\[
	\int_{\cS} \ww 
	= \sum_{j=0}^k \int_{\cS} \beta_j \cdot \ww 
	= \sum_{j=1}^k \frac{1}{\sharp \bm{G}^\text{eff}(x_j)} \sum_{i\in I_j} w_i \int_{(M_i,o_i)} \beta_j \cdot \ww
	\]
	(we drop the term $j=0$ in the second equality since $\abs{\supp \beta_0} \cap \cS = \emptyset$).
\end{remark}

\begin{theorem}[Stokes' theorem, {\cite[Thm.~9.4]{HWZbook}}]
	\label{thm:stokes}
	Let $\cZ$ be a polyfold with polyfold structure $(Z,\bm{Z})$ which admits $\ssc$-smooth partitions of unity.
	Let $\cS$ be an $n$-dimensional compact oriented weighted branched suborbifold, and let $\partial \cS$ be its boundary with induced weights and orientation.  Consider a $\ssc$-differential form $\omega\in \Omega^{n-1}_\infty (Z)$.
	Then 
	\[
	\int_{\cS} d\omega   = \int_{\partial \cS} \omega.
	\]
\end{theorem}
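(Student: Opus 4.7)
The plan is to reduce to classical Stokes' theorem on oriented manifolds by means of a morphism-invariant $\ssc$-smooth partition of unity, exploiting the local characterization of the branched integral given in Theorem~\ref{def:branched-integral}. Following the computation recipe recalled immediately after that theorem, I would cover the compact topological space $\cS$ by saturations of $\bm{G}(x_j)$-invariant local uniformizers $U_{x_j}$ on each of which $\cS$ decomposes into an oriented local branching structure $(M_i, o_i)_{i\in I_j}$ with weights $(w_i)_{i\in I_j}$, choose an auxiliary saturated open set $U_0^*$ disjoint from $\cS$, and fix a morphism-invariant $\ssc$-smooth partition of unity $\beta_0, \beta_1, \ldots, \beta_k$ subordinate to $U_0^*, U_{x_1}^*, \ldots, U_{x_k}^*$. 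The identity $\sum_{j=0}^k \beta_j \equiv 1$ on $Z$ forces $\sum_{j=0}^k d\beta_j \equiv 0$, while $\sum_{j=1}^k \beta_j \equiv 1$ holds on $\cS \supset \partial\cS$ because $\beta_0$ vanishes on $\cS$.

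Next I would apply the product rule $\beta_j\,d\omega = d(\beta_j\omega) - d\beta_j\wedge\omega$, sum over $j$, and use $\sum_j d\beta_j = 0$ to obtain
\[
\int_{\cS} d\omega = \sum_{j=0}^{k} \int_{\cS} d(\beta_j\omega).
\]
Each form $\beta_j\omega$ has support in a single saturated local uniformizer, so the local formula in Theorem~\ref{def:branched-integral} applies and decomposes the branched integral into ordinary integrals over the classical oriented manifolds $(M_i, o_i)$. Classical Stokes' theorem for manifolds with boundary then gives
\[
\int_{(M_i, o_i)} d(\beta_j\omega) = \int_{\partial(M_i, o_i)} \beta_j\omega,
\]
where $\partial(M_i, o_i)$ carries the induced boundary orientation.

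I would finish by reassembling these local equalities. By the definition of the induced weights and orientation on $\partial\cS$, a local branching structure at a boundary object $x_j \in \partial S$ is precisely $(\partial M_i, \partial o_i)_{i\in I_j}$ with the same weights $w_i$ and the same effective isotropy group $\bm{G}^{\text{eff}}(x_j)$. Applying the local formula in Theorem~\ref{def:branched-integral} in reverse yields
\[
\frac{1}{\sharp\bm{G}^{\text{eff}}(x_j)} \sum_{i\in I_j} w_i \int_{\partial(M_i, o_i)} \beta_j\omega = \int_{\partial\cS} \beta_j\omega,
\]
and summing over $j$ together with $\sum_{j} \beta_j \equiv 1$ on $\partial\cS$ produces the desired identity $\int_{\cS} d\omega = \int_{\partial\cS}\omega$.

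The main obstacle is not computational but bookkeeping: one must verify that the induced boundary data on $\partial\cS$ is defined so that local branches of $\partial\cS$ near $x_j$ coincide as weighted oriented submanifolds with $(\partial M_i, \partial o_i, w_i)$, and that the effective isotropy group at $x_j$ is unchanged upon restriction to the boundary. Both are built into the definitions discussed in Section~\ref{subsec:weighted-branched-suborbifolds}, and the requisite compatibility across morphisms follows from the morphism invariance of local orientations imposed in Definition~\ref{def:local-orientation}.
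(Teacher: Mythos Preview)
The paper does not supply its own proof of this statement; Theorem~\ref{thm:stokes} is quoted verbatim from \cite[Thm.~9.4]{HWZbook} and used as a black box. Your proposal is correct and follows precisely the strategy that the paper already sketches in the Remark immediately following Theorem~\ref{def:branched-integral} (which in turn summarizes the argument of \cite[Thm.~9.4]{HWZbook}): cover $\cS$ by saturated local uniformizers carrying branching structures, localize via a morphism-invariant $\ssc$-smooth partition of unity, apply classical Stokes on each oriented branch $(M_i,o_i)$, and reassemble using the local formula for the branched integral on both $\cS$ and $\partial\cS$.

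One small point worth making explicit in your write-up: when you invoke classical Stokes on a branch $M_i$, you need $\beta_j\omega|_{M_i}$ to have compact support away from the ``artificial'' part of $\partial M_i$ arising from the edge of the uniformizer $U_{x_j}$ rather than from $\partial\cS$. This is exactly what the properness requirement on the inclusions $\phi_i:M_i\to U$ in Definition~\ref{def:weighed-branched-suborbifold}(4) buys you, together with $\supp\beta_j\subset U_{x_j}^*$; it would strengthen the exposition to say so rather than leaving it implicit in ``bookkeeping.''
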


As discussed in the introduction, given a $\ssc$-smooth map 
	$
	f:\cZ \to \cO
	$
from a polyfold $\cZ$ to an orbifold $\cO$, the \textbf{polyfold invariant} is the homomorphism obtained by pulling back a de Rahm cohomology class from the orbifold and taking the branched integral over a perturbed zero set:
	\[
	H^*_{\dR} (\cO) \to \R,	\qquad \ww \mapsto \int_{\cS(p)} f^*\ww.
	\]
Using Stokes' theorem~\ref{thm:stokes}, we see that this homomorphism does not depend on the choice of abstract perturbation used to obtain the compact oriented weighted branched orbifold $\cS(p)$.

\subsection{Polyfold invariants as intersection numbers}\label{subsec:intersection-numbers}

Compact oriented weighted branched suborbifolds possess suitable notions smooth maps and of oriented tangent spaces; as such, we can generalize appropriate notions of transversal intersection and intersection number.

Let $\cS$ be a compact oriented weighted branched suborbifold.
Let $\cO$ be an oriented orbifold, and let $\cX$ be a closed embedded full suborbifold whose normal bundle has fiberwise trivial isotropy action.
Consider a smooth map $f:\cS\to\cO$ with an associated smooth functor $\hat{f}: (S,\bm{S}) \to (O,\bm{O}).$

\begin{definition}
	We say that $f$ is \textbf{transverse} to $\cX$, written symbolically as $f\pitchfork \cX$, if for every object $x\in \hat{f}^{-1}(X)$
		\[
		D\hat{f}_x (T_x M_i) \oplus T_{\hat{f}(x)} X = T_{\hat{f}(x)} O
		\]
	for every local branch $M_i$ of a given local branching structure $(M_i)_{i\in I}$ at $x$.
\end{definition}

\subsubsection{Achieving transversality}
Transversal intersection is a generic property, and may be obtained via either of the following two propositions.

\begin{proposition}[Transversality through perturbation of the embedded full suborbifold]
	\label{prop:transversality-perturbation-full-suborbifold}
	We may perturb $\cX$ so that $f \pitchfork \cX$. Stated formally, the inclusion map $i : \cX \hookrightarrow \cO$ is homotopic to a smooth inclusion map $i' : \cX \hookrightarrow \cO$ such that $f \pitchfork i'(\cX)$.
\end{proposition}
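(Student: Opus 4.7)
The plan is to adapt the parametric transversality approach of Theorem~\ref{thm:perturb-submanifold} to the orbifold setting, with the hypothesis that the normal bundle $N\cX$ has fiberwise trivial isotropy action playing a crucial role. The key idea is to construct a finite-dimensional family of perturbations of the inclusion $i : \cX \hookrightarrow \cO$ which is a submersion near the original inclusion, and then appeal to Sard's theorem to extract a regular parameter.

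First, by the tubular neighborhood theorem, for $\ep > 0$ sufficiently small there is a local diffeomorphism of orbifolds $\nu : N_\ep \cX \to \cO$ which is a homeomorphism onto its image; by Remark~\ref{rmk:orbit-space-normal-bundle}, the underlying topological space $N\cX$ is a genuine vector bundle over $\cX$. Since $f(\cS) \cap \cX$ is compact (the continuous image of the compact $\cS$ intersected with the closed set $\cX$), it suffices to achieve transversality in a neighborhood of this compact set. Using morphism-invariant smooth partitions of unity, choose finitely many morphism-invariant smooth sections $s_1, \ldots, s_N$ of $N\cX \to \cX$ whose values span the normal fiber $N_{[x]}\cX$ at every point of $f(\cS) \cap \cX$. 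The fiberwise trivial isotropy hypothesis is essential here: isotropy morphisms act trivially on normal fibers (so no obstruction arises at isotropy groups), and the remaining morphism-compatibility conditions (between distinct objects) are handled by the standard partition-of-unity construction.

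Next, define the finite-dimensional family
\[
I : \cX \times B^N \to \cO, \qquad ([x], t) \mapsto \nu\Bigl([x], \sum_{j=1}^N t_j s_j([x])\Bigr),
\]
for $t$ sufficiently small that the argument lies in $N_\ep \cX$. By morphism invariance of the sections, $I$ admits an associated smooth functor $\hat{I}$ and satisfies $I(\cdot, 0) = i(\cdot)$. The differential $D_{([x], 0)} I$ restricted to $\{0\} \oplus \R^N$ surjects onto $N_{[x]}\cX$ by construction, so $I$ is a submersion on an open neighborhood of $(f(\cS) \cap \cX) \times \{0\}$. Because $\cS$ is compact, cover it by finitely many open neighborhoods carrying local branching structures, producing a finite collection of local branches $M_1, \ldots, M_L$. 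For each branch $M_j$, transversality of $\hat{f}|_{M_j}$ with $I(\cdot, s)(\cX)$ is equivalent to transversality of $\hat{f}|_{M_j} \times I(\cdot, s) : M_j \times \cX \to \cO \times \cO$ with the diagonal $\Delta$. Since $I$ is a submersion near the zero section, the parametric map $\hat{f}|_{M_j} \times I$ is transverse to $\Delta$; by Sard's theorem applied to the projection of $(\hat{f}|_{M_j} \times I)^{-1}(\Delta)$ onto $B^N$, the set of parameters $s$ for which $\hat{f}|_{M_j} \pitchfork I(\cdot, s)(\cX)$ is open and dense in $B^N$. Intersecting these finitely many open dense conditions over $j = 1, \ldots, L$, pick a regular $s$ near $0$ and define $i'([x]) := I([x], s)$.

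For small $s$, $i'$ is a smooth embedding of orbifolds, and the straight-line homotopy in $N_\ep \cX$ composed with $\nu$ provides a homotopy from $i$ to $i'$. The main obstacle I anticipate is verifying that $I$ really defines a smooth functor on $(X, \bm{X}) \times B^N$ rather than merely a continuous map on underlying spaces. This is precisely where the fiberwise trivial isotropy hypothesis becomes unavoidable: it guarantees that the perturbation directions can be specified by morphism-invariant sections of $N\cX$, so that the resulting perturbed inclusion is a well-defined embedding of ep-groupoids and $i'(\cX)$ is a full suborbifold of $\cO$.
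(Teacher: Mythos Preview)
Your proposal is correct and follows essentially the same approach as the paper: both construct a finite-dimensional family $I:\cX\times B^N\to\cO$ of perturbations of the inclusion via morphism-invariant sections of $N\cX$ (using the fiberwise trivial isotropy hypothesis exactly as you indicate), reduce $f\pitchfork i'(\cX)$ to transversality of $\hat{f}|_{M_j}\times I$ with the diagonal in $\cO\times\cO$ over finitely many local branches covering $\cS$, and apply Sard's theorem to extract a common regular parameter. The only cosmetic difference is that the paper, using compactness of $\cX$, builds sections spanning the normal fiber everywhere on $\cX$, whereas you economize by spanning only near $f(\cS)\cap\cX$.
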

\begin{proof}
Consider the normal bundle $N\cX$ and let $k := \text{rank } N\cX$.
There exists a finite collection of local uniformizers $U_i \subset X$ centered at objects $x_i$ such that the open sets $\abs{U_i}$ cover the underlying topological space $\cS$.
We may furthermore assume that these local uniformizers also give local trivializations of the normal bundle $P:N\cX\to \cX$, i.e.,
	\[
	\hat{P}^{-1}(U_i) \simeq U_i \times \R^k.
	\]

We claim that there exists a smooth map between orbifolds 
	\[
	F : \cX \times B_\ep^N \to N\cX
	\]
which is a submersion and such that $F(\cdot, 0)$ is the zero section.
Here we consider the open disc $B_\ep^N = \{x\in \R^N \mid \abs{x}<\ep\}$ as a manifold with the trivial orbifold structure (see Definition~\ref{def:manifold-as-trivial-orbifold}).
This functor may be constructed by taking the zero section plus a finite sum of parametrized sections which span the fiber of the normal bundle at every point. This is not normally possible without using multisections---however we note that the assumption that $N\cX$ has fiberwise trivial isotropy action implies that $\bm{G}(x)$ acts trivially on the second factor of $U_i \times \R^k$. 
Therefore, we may locally define parametrized sections which are invariant under the $\bm{G}(x)$-action as follows: 
	\[
	U_i \times B^k_\ep \to \R^k,\qquad (y,t_1,\ldots,t_k) \mapsto \beta(y)  \sum_{j=1}^{k} t_j e_j
	\]
where $e_j\in \R^k$ are a basis of the unit vectors and where $\beta: U_i \to [0,1]$ is a $\bm{G}(x)$-invariant bump function. This locally defined expression may then be extended to a globally defined, morphism invariant, parametrized section of the bundle $NX \to X$ which induces a smooth parametrized section of $N\cX \to \cX$.

For $\ep$ sufficiently small, there exists a sufficiently small open neighborhood $N_\ep\cX$ of the zero section $\cX \to N_\ep \cX$ such that there is a well-defined map between orbifolds
	$i : N_\ep \cX \to \cO$
which is a homeomorphism onto its image when considered with respect to the underlying topological spaces, and which is a local diffeomorphism when considered with respect to the orbifold structures.  It therefore follows that for sufficiently small $\ep' \leq \ep$ we have a well-defined smooth submersion given by the composition
	\[
	I:= i\circ F : \cX \times B_{\ep'}^N \to N_{\ep'}\cX \to \cO.
	\]

Consider the map $f \times I: \cS \times \cX \times B_{\ep'}^N \to \cO \times \cO$, and observe that this map is necessarily transverse to the diagonal of the object space $O$, i.e., $\Delta \subset O\times O$. Hence for any object $(x,y,s) \in \hat{f}^{-1}(\Delta)$
	\[
	D\hat{f}_x (T_x M_i) \oplus D\hat{I}_{(y,s)} (T_y X \times \R^N)  = T_{(\hat{f}(x),\hat{I}(y,s))} \Delta
	\]
where $(M_i)_{i\in I}$ is a local branching structure at the object $x\in S$.
Choose a finite cover of the underlying topological space $\cS$ by open sets of the form $\abs{\cup M_i}$.
It follows that the finite collection of smooth maps
	\[
	\hat{f}|_{M_i} \times \hat{I}|_{U_j\times B_{\ep'}^N} : M_i \times U_j \times B_{\ep'}^N \to O\times O
	\]
are all transverse to the diagonal $\Delta \subset O\times O$.
Choose a common regular value $s\in B_{\ep'}^N$ and define the smooth inclusion map $i'(\cdot) := I(\cdot,s) : \cX \hookrightarrow \cO$; it follows that the collection of smooth maps
	\[
	\hat{f}|_{M_i} \times \hat{i'}|_{U_j} : M_i \times U_j \to O\times O
	\]
are transverse to the diagonal.
This implies that $f\pitchfork i' (\cX)$, as desired.
\end{proof}

In some situations, it is undesirable to perturb the full suborbifold.
For example, the full suborbifold might represent fixed constraints we wish to impose on the perturbed solution set.
In these situations, we wish to achieve transversality by choice of a suitable generic abstract perturbation.
The following proposition requires familiarity with some of the abstract machinery of polyfold theory, specifically of the construction of regular $\ssc^+$-multisections in \cite[Ch.~15]{HWZbook}.

\begin{proposition}[Transversality through construction of a regular perturbation]
	\label{prop:transversality-regular-perturbation}
	Consider a \textit{polyfold Fredholm problem} consisting of a strong polyfold bundle $\cW$ over a polyfold $\cZ$ together with a $\ssc$-smooth proper Fredholm section $\delbar$:
		\[
			\begin{tikzcd}
			\cW \arrow[r,"P"'] & \cZ. \arrow[l, bend right, swap, "\delbar"]
			\end{tikzcd}
		\]
	Let $\cO$ be an oriented orbifold, and let $\cX$ be a closed embedded full suborbifold whose normal bundle has fiberwise trivial isotropy action.
	Consider a $\ssc$-smooth map $f:\cZ\to\cO$ with an associated $\ssc$-smooth functor $\hat{f}: (Z,\bm{Z}) \to (O,\bm{O}).$
	Suppose moreover that $f$ is a \textit{submersion} on $\cZ_\infty \subset \cZ$, i.e., for any object $x \in Z_\infty$ we have $D\hat{f}_x (T_x Z) = T_{\hat{f}(x)} O$.
	
	Then there exists a choice of regular perturbation such that the map $f$ restricted to the perturbed solution set defined by this perturbation is transverse to the embedded full suborbifold $\cX \subset \cO$.
	
	Restated formally, there exists a choice of regular $\ssc^+$-multisection
		$p: \cW \to \Q^+$
	whose perturbed solution set 
		$\cS(p)= \{	[x]\in \cZ \mid p(\delbar([x]))>0	\}$
	has the structure of a compact oriented weighted branched suborbifold, and such that $f$ restricted to $\cS(p)$ is transverse to $\cX$,
		\[f|_{\cS(p)} \pitchfork \cX.\]
		
    Furthermore, the subset $\cS(p)\cap f^{-1}(\cX)$ also has the the structure of a compact oriented weighted branched suborbifold.
\end{proposition}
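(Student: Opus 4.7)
My plan is to reduce the desired transversality to the regularity of the Fredholm section $\delbar$ \emph{restricted} to the preimage $f^{-1}(\cX)$, and then to build a global regular $\ssc^+$-multisection on $\cW$ whose restriction to $f^{-1}(\cX)$ is a pre-assigned regular perturbation of the restricted problem. The submersion hypothesis does double duty here: using the tubular neighborhood theorem and the fiberwise trivial isotropy action on $N\cX$, locally $\cX=g^{-1}(0)$ for a smooth submersion $g:N_\ep\cX\to\R^k$ (with $k=\codim_\cO\cX$), so $g\circ\hat f$ is a local $\ssc$-smooth submersion cutting out $f^{-1}(\cX)\cap\cZ_\infty$ as a sub-polyfold of $\cZ_\infty$, and at any $x$ in this sub-polyfold the quotient $T_xZ/T_xf^{-1}(\cX)$ is canonically identified with $(N\cX)_{\hat f(x)}$.

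First I would apply the HWZ regularization theorem \cite[Thm.~15.4]{HWZbook} to the restricted Fredholm problem $\delbar|_{f^{-1}(\cX)}$ on $\cW|_{f^{-1}(\cX)}$ to obtain a regular $\ssc^+$-multisection $p'$, whose perturbed zero set $\cS(p')$ is automatically a compact oriented weighted branched suborbifold. Next I would extend $p'$ to a $\ssc^+$-multisection $\tilde p$ on all of $\cW$ (via the polyfold analog of the Whitney extension theorem combined with morphism-invariant $\ssc$-partitions of unity), and then, following the layered inductive construction of \cite[Ch.~15]{HWZbook}, add a further $\ssc^+$-multisection $p''$ supported in the complement of a small saturated neighborhood of $f^{-1}(\cX)$ so that $p:=\tilde p+p''$ is regular on all of $\cW$. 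By construction $p|_{f^{-1}(\cX)}=p'$, hence $\cS(p)$ is a compact oriented weighted branched suborbifold, the restricted section $(\delbar+p)|_{f^{-1}(\cX)}$ is regular, and $\cS(p)\cap f^{-1}(\cX)=\cS(p')$ inherits the claimed compact oriented weighted branched suborbifold structure.

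To extract the transversality, at any zero $[x]\in\cS(p)\cap f^{-1}(\cX)$ and any local branch $M_i$ of $\cS(p)$ through $x$, regularity of $\delbar+p$ on $\cZ$ identifies $T_xM_i$ with $\ker D(\delbar+p)_x$, while regularity of the restriction shows that $D(\delbar+p)_x$ surjects onto $W_x$ already from the subspace $T_xf^{-1}(\cX)$; this forces $T_xM_i+T_xf^{-1}(\cX)=T_xZ$, which, using the identification $T_xZ/T_xf^{-1}(\cX)\simeq(N\cX)_{\hat f(x)}$, is precisely $D\hat f_x(T_xM_i)+T_{\hat f(x)}X=T_{\hat f(x)}O$, i.e.\ $f|_{\cS(p)}\pitchfork\cX$. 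The hard part will be the extension-plus-further-perturbation step: producing $\tilde p$ that extends $p'$ and then $p''$ which regularizes $\tilde p$ off $f^{-1}(\cX)$ without changing it on $f^{-1}(\cX)$. Regularity away from a small saturated neighborhood of $f^{-1}(\cX)$ is an open and generic condition, so the usual genericity arguments for $\ssc^+$-multisections of \cite[Ch.~15]{HWZbook} apply on the complementary open sub-polyfold, and gluing back via a morphism-invariant $\ssc$-partition of unity produces the desired $p$.
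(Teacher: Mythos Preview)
Your route is genuinely different from the paper's and the final linear-algebra step is sound, but two of your middle steps are real gaps rather than routine technicalities.

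First, the ``polyfold analog of the Whitney extension theorem'' you invoke to extend $p'$ from $\cW|_{f^{-1}(\cX)}$ to $\cW$ does not appear in \cite{HWZbook} and is not a standard tool; you would also need to verify that $f^{-1}(\cX)$ is a sub-polyfold on which $\delbar$ restricts to a proper $\ssc$-Fredholm section, which is plausible from the submersion hypothesis but not immediate in the $\ssc$-category. Second, and more seriously, ``gluing back via a morphism-invariant $\ssc$-partition of unity'' does not produce a regular multisection: regularity of $\tilde p$ along $f^{-1}(\cX)$ and regularity of a separate perturbation on the complement do not combine to give regularity in the collar region where the cutoff is nontrivial, since a convex combination of two transversal multisections need not be transversal. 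You would need to argue via openness of surjectivity (the IFT near the compact set $\cS(p')$) that $\tilde p$ is already regular on some saturated neighborhood $U_1$ of its zeros on $f^{-1}(\cX)$, then separate the remaining (non-regular) zeros from $f^{-1}(\cX)$, and only then add a further \emph{parametrized} family supported off $f^{-1}(\cX)$ and apply Sard---at which point you are essentially redoing the paper's argument with extra bookkeeping.

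The paper avoids all of this by never restricting to $f^{-1}(\cX)$. Instead it directly builds, at each $x_0\in\cS(\delbar)$, local $\ssc^+$-sections that fill both the cokernel of $D\hdelbar_{x_0}$ \emph{and} (using the submersion hypothesis) vectors whose $D\hat f$-images span $T_{\hat f(x_0)}O$; after symmetrizing and summing over a finite cover this yields a parametrized multisection $\Lambda:\cW\times B_\ep^N\to\Q^+$ which is a transversal perturbation of the combined map $\delbar\times f:\cZ\to\cW\times\cO$. The thickened solution sets with and without the constraint $f\in\cX$ are then both weighted branched suborbifolds by the polyfold IFT, and a single application of Sard picks a common regular value $t_0$. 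Transversality $f|_{\cS(\Lambda(\cdot,t_0))}\pitchfork\cX$ then drops out of exactly the kernel--cokernel linear algebra you wrote down. Your restrict-then-extend strategy would, if completed, give an alternative with the conceptual advantage of isolating the constrained problem; the paper's approach trades that for a single clean Sard step and no extension issues.
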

\begin{proof}

Our proof follows the general position argument of \cite[Thm.~15.4]{HWZbook}.
The basic idea is that, in addition to filling in the cokernel of the linearization of $\hdelbar$ via parametrized $\ssc^+$-sections, we use the fact that $f$ is a submersion to fill in additional vectors in order to obtain a transversal perturbation of $\delbar \times f$.

Denote the unperturbed zero set by $\cS(\delbar) := \{	[x]\in \cZ \mid \delbar([x])=0	\}$; it has an associated subgroupoid $(S(\hdelbar),\bm{S}(\hdelbar))$. Moreover, recall that $\cS(\delbar)\subset \cZ_\infty$ and that $\cS(\delbar)$ is compact.
As additional data, fix a pair $(N,\cU)$ which controls the compactness of $\delbar$ (see \cite[Def.~15.4]{HWZbook}).

\noindent\emph{Local construction of a parametrized $\ssc^+$-multisection.}
Consider a point $[x_0]\in \cS(\delbar)$ and let $x_0\in S(\hdelbar)$ be a representative with isotropy group $\bm{G}(x_0)$. Let $U\subset Z$ be a $\bm{G}(x_0)$-invariant open neighborhood of $x_0$, and moreover let $V\subset U$ be a $\bm{G}(x_0)$-invariant open neighborhood of $x_0$ such that $\overline{V} \subset U$.

We may choose smooth vectors $v_1,\ldots, v_m \in W_{x_0}$ such that
	\[
	\text{span}\{v_1,\ldots,v_m\} \oplus D \hdelbar_{x_0} (T_{x_0} Z) = W_{x_0}.
	\]
By assumption we may also choose vectors $a_1,\ldots, a_n \in T_{x_0} Z$ such that
	\[
	\text{span} \{D\hat{f}_{x_0} (a_1), \ldots,D\hat{f}_{x_0} (a_n)\}  = T_{\hat{f}(x_0)} O.
	\]
Without loss of generality we may assume that these vectors are smooth;
to see this, observe that smooth vectors are dense in $T_{x_0} Z$ and $D\hat{f}_{x_0}$ is continuous, hence a small perturbation $a'_1,\ldots, a'_n$ will yield a spanning set $\{D\hat{f}_{x_0} (a'_1), \ldots,D\hat{f}_{x_0} (a'_n)\}$ of the finite-dimensional vector space $T_{\hat{f}(x_0)} O$.

Let $v_{m+i} : = D \hdelbar_{x_0}(a_i) \in W_{x_0}$.
For each smooth vector $v_1,\ldots, v_{m+n}$ we may use \cite[Lem.~5.3]{HWZbook} to define $\ssc^+$-sections $s_i : U \to W$ such that
	\begin{itemize}
		\item $s_i= 0$ on $U\setminus V$,
		\item $s_i(x_0) = v_i$.
	\end{itemize}
Furthermore, to ensure that the resulting multisection is controlled by the pair $(N,\cU)$ we require that
	\begin{itemize}
		\item $N[s_i] \leq 1,$
		\item $\text{supp}(s_i)\subset \cU$.
	\end{itemize}
Define a parametrized $\ssc^+$-section as follows:
	\[
	s_{x_0}:	U \times B_\ep^{m+n}	\to 		W,
	\qquad (x,t) \mapsto	\sum_{i=1}^{m+n} t_i s_i (x).
	\]
Now observe that the function defined by
	\[
	U \times B_\ep^{m+n} 	\to W \times O, 
	\qquad (x, t) \mapsto (\hdelbar (x) - s_{x_0} (x,t), \hat{f}(x))
	\]
is $\ssc$-Fredholm and its linearization, projected to the fiber $W_{x_0} \times T_{\hat{f}(x_0)} O$, is surjective in a neighborhood of $x_0$ by construction. (This projection is well-defined in a neighborhood of $x_0$ in local $\ssc$-coordinates.)

For every element $g$ of the isotropy group $\bm{G}(x_0)$ we define a $\ssc^+$-section $g * s_{x_0} : U \times B_\ep^{m+n} \to W$ by the following equation:
	\[
	\left(g * s_{x_0} \right) (x,t) := \mu (\Gamma (g, g^{-1}*x), s_{x_0} (g^{-1}* x,t))
	\]
where $\Gamma$ is the natural representation of $\bm{G}(x_0)$ on $U$.
Note that due to the morphism invariance of $\hdelbar$ and $\hat{f}$ the linearization of the function 
	\begin{align*}
	U \times B_\ep^{m+n} 	&\to W \times O \\
	(x, t) 					&\mapsto (\hdelbar (x) - g*s_{x_0} (x,t), \hat{f}(x)),
	\end{align*}
projected to the fiber $W_{x_0} \times T_{\hat{f}(x_0)} O$ is surjective in a neighborhood of $x_0$.

The collection of parametrized $\ssc^+$-sections $\{g * s_{x_0}\}_{g\in \bm{G}(x_0)}$ is $\bm{G}(x_0)$-in\-vari\-ant under the obvious action; the collection $\{g * s_{x_0}\}_{g\in \bm{G}(x_0)}$ with associated weights ${1}/{\sharp \bm{G}(x_0)}$ together give the local section structure for a globally defined $\ssc^+$-multi\-section functor
	\[
	\hat{\Lambda}_0 : W \times B_\ep^{m+n} \to \Q^+.
	\]

\noindent\emph{Global construction of a parametrized $\ssc^+$-multisection.}
Using the above construction, at any point $[x_0]\in \cS(\delbar)$ we have defined a parametrized $\ssc^+$-multisection functor $\hat{\Lambda}_0$ which satisfies the following.
There exists an open neighborhood $[x_0] \in \cU \subset \cZ$ such that: for any point $[x]\in \cU$ and for any parametrized local section structure $\{s_i\}_{i\in I}$ of $\hat{\Lambda}_0$ defined on an open neighborhood $V$ of a representative $x$, the linearization of the function 
	\begin{align*}
	V \times B_\ep^{n+m} 	&\to W \times O \\
	(x, t) 					&\mapsto (\hdelbar (x) - s_i(x,t), \hat{f}(x))
	\end{align*}
projected to the fiber $W_{x_0} \times T_{\hat{f}(x_0)} O$, is surjective.

We may cover the compact topological space $\cS(\delbar)$ by a finite collection of such sets $\cU_j$; it follows that the finite sum of $\ssc^+$-multisections
	\[
	\Lambda:= \bigoplus_j \Lambda_j : \cW \times B_\ep^N \to \Q^+
	\]
has the property that for any point $[x] \in \cZ$ with $\Lambda \circ \delbar ([x])>0$ and for any parametrized local section structure $\{s_i\}_{i\in I}$ of $\Lambda$ defined on an open neighborhood $V$ of a representative $x$, the linearization of the function 
	\begin{align*}
	V \times B_\ep^N 	&\to W \times O \\
	(x, t) 					&\mapsto (\hdelbar (x) - s_i(x,t), \hat{f}(x))
	\end{align*}
projected to the fiber $W_{x} \times T_{\hat{f}(x)} O$, is surjective.
In the terminology of \cite[Def.~15.2]{HWZbook}, this we say that the $\ssc^+$-multisection $\Lambda$ is a ``transversal perturbation'' of the map $\delbar \times f : \cZ \to \cW \times \cO$.

Furthermore for $\ep$ sufficiently small, for any fixed $t_0 \in B_\ep^N$ the $\ssc^+$-multisection $\Lambda (\cdot, t_0)$ is controlled by the pair $(N,\cU)$, i.e.,
	\begin{itemize}
		\item $N[\Lambda(\cdot,t_0)] \leq 1$,
		\item $\text{dom-supp}(\Lambda(\cdot,t_0)) \subset \cU$.
	\end{itemize}

\noindent\emph{The thickened solution set without and with constraints.}
The polyfold implicit function theorem \cite[Thm.~3.14]{HWZbook} implies that the thickened solution set without constraints,
	\[
	\cS (\Lambda; B_\ep^N): = \{	([x],t) \in \cZ \times B_\ep^N \mid \Lambda(\delbar([x]),t)>0\},
	\]
has the structure of a weighted branched suborbifold (see \cite[Thm.~15.2]{HWZbook} for further details).
Furthermore, the thickened solution set with constraints,
	\[
	\cS (\Lambda; B_\ep^N)\cap f^{-1}(\cX) = \{	([x],t) \in \cZ \times B_\ep^N \mid \Lambda(\delbar([x]),t)>0, f([x])\in \cX	\},
	\]
also has the structure of a weighted branched suborbifold; this follows from the same argument, with the polyfold implicit function theorem applied to a local coordinate representation of the transversal perturbation of the map $\delbar \times f : \cZ \to \cW \times \cO$.

By Sard's theorem, we can find a common regular value $t_0 \in B_\ep^N$ of the projections $\cS (\Lambda; B_\ep^N) \to B_\ep^N$ and $\cS (\Lambda; B_\ep^N) \cap f^{-1}(\cX) \to B_\ep^N$;
the perturbed solution set without constraints
$\cS (\Lambda(\cdot, t_0)): = \{	[x] \in \cZ \mid \Lambda(\delbar([x]),t_0)>0\}$
and the perturbed solution set with constraints
$\cS (\Lambda(\cdot,t_0))\cap f^{-1}(\cX) = \{	[x] \in \cZ \mid \Lambda(\delbar([x]),t_0)>0, f([x])\in \cX	\},$
both have the structure of weighted branched suborbifolds.
We may moreover assume that $\Lambda(\cdot,t_0)$ is controlled by the pair $(N,\cU)$, and hence by \cite[Lem.~4.16]{HWZ3} the underlying topological space $\cS (\Lambda(\cdot, t_0))$ is compact.  This implies $\cS (\Lambda(\cdot,t_0))\cap f^{-1}(\cX)$ is also compact (as it is a closed subset of $\cS (\Lambda(\cdot, t_0))$).

\noindent\emph{Transversality of the perturbed solution set and the suborbifold.}
All that remains to complete the proof of the theorem is to demonstrate that $f|_{\cS(\Lambda (\cdot, t_0))} \pitchfork \cX$.
By definition, this is satisfied if for every point $[x]\in \cS (\Lambda(\cdot,t_0))\cap f^{-1}(\cX) \subset \cS(\Lambda (\cdot, t_0))$ and for any local branch $M_i$ of a local branching structure $(M_i)_{i\in I}$ at a representative $x$ (with respect to the weighted branched suborbifold $\cS(\Lambda (\cdot, t_0))$) we have:
	\[
	D\hat{f}_x(T_x M_i) \oplus T_{\hat{f}(x)} X = T_{\hat{f}(x)} O.
	\]
To see that this is true, consider a local section structure $(s_i)_{i\in I}$ of $\hat{\Lambda}(\cdot,t_0)$ at $x$ which defines this local branching structure; hence
    \[
    M_i = (\hdelbar - s_i)^{-1}(0)
    \]
and moreover
    \begin{equation}\label{eq:tangent-TM}
    T_x M_i = \ker \pr_{W_x} D (\hdelbar - s_i)_x.
    \end{equation}
The local branching structure at $x$ for $\cS (\Lambda(\cdot,t_0))\cap f^{-1}(\cX)$ is then given by $M_i':= (M_i \cap \hat{f}^{-1}(X))$; moreover
    \begin{equation}\label{eq:tangent-TM-prime}
    T_x M_i' = \ker \pr_{W_x} D (\hdelbar - s_i)_x \cap \ker \pr_{N_{\hat{f}(x)}X} D\hat{f}_x.
    \end{equation}
We may decompose $T_x Z$ as
    \[
    T_xZ = E \oplus \underbrace{F \oplus T_x M_i'.}_{= T_x M_i}
    \]
Since $\pr_{W_x} D (\hdelbar - s_i)_x : T_xZ \to W_x$ is surjective, it follows from \eqref{eq:tangent-TM} that its restriction to $E$ is an isomorphism.
Since $\pr_{W_x} D (\hdelbar - s_i)_x \times \pr_{N_{\hat{f}(x)}X} D\hat{f}_x : T_x Z \to W_x \times N_{\hat{f}(x)}X$ is surjective, it follows from \eqref{eq:tangent-TM-prime} that its restriction to $E\oplus F$ is an isomorphism.
From these two observations it necessarily follows that $\pr_{N_{\hat{f}(x)}X} D\hat{f}_x |_F : F \to N_{\hat{f}(x)}X$ is also an isomorphism.
Since $N_{\hat{f}(x)}X$ and $T_{\hat{f}(x)} X$ together span $T_{\hat{f}(x)} O$ we see that $D\hat{f}_x(T_x M_i) \oplus T_{\hat{f}(x)} X = T_{\hat{f}(x)} O$ as desired.    
\end{proof}

Consider again the setup described at the start of the section, \S~\ref{subsec:intersection-numbers}.

\begin{lemma}\label{lem:neighborhood}
	Given an open neighborhood $\cU \subset \cS$ of $f^{-1}(\cX)$ there exists an open neighborhood $\cV \subset \cO$ of $\cX$ such that
		$
		f^{-1} (\cV) \subset \cU.
		$
\end{lemma}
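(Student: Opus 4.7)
The plan is to give a standard closed-map style argument using only that the underlying topological space of $\cS$ is compact and the underlying topological space of $\cO$ is Hausdorff; none of the orbifold or branched structure plays a role here.

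First I would set $K := \cS \setminus \cU$, which is closed in $\cS$ and therefore compact. Since $f: \cS \to \cO$ is continuous at the level of underlying topological spaces, the image $f(K)$ is a compact subset of $\cO$, and because $\cO$ is Hausdorff the image $f(K)$ is closed in $\cO$. Next I would verify that $f(K)$ is disjoint from $\cX$: if some $[y] \in \cX$ could be written as $f([x])$ with $[x] \in K = \cS \setminus \cU$, then $[x] \in f^{-1}(\cX) \subset \cU$, contradicting $[x] \notin \cU$.

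Having established these two facts, I would define
\[
\cV := \cO \setminus f(\cS \setminus \cU).
\]
Then $\cV$ is open (as the complement of a closed set) and $\cV \supset \cX$ (by the disjointness just verified). The containment $f^{-1}(\cV) \subset \cU$ follows from the set-theoretic identity
\[
f^{-1}(\cV) \;=\; \cS \setminus f^{-1}\bigl(f(\cS \setminus \cU)\bigr) \;\subset\; \cS \setminus (\cS \setminus \cU) \;=\; \cU,
\]
where the inclusion uses the elementary fact $\cS \setminus \cU \subset f^{-1}(f(\cS \setminus \cU))$.

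There is no real obstacle in this argument; the only ingredients are compactness of $\cS$, the Hausdorff property of $\cO$ (both of which are built into the definitions of weighted branched suborbifold and orbifold in this paper), and continuity of $f$ on the underlying topological spaces. The lemma is essentially the familiar statement that a continuous map from a compact space to a Hausdorff space is closed, packaged so as to produce the desired saturated neighborhood.
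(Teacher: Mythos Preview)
Your proof is correct and follows essentially the same line as the paper's: both show that $f(\cS\setminus\cU)$ is compact and disjoint from $\cX$, and then produce $\cV$ from this. The only difference is in the last step: the paper invokes that $\cO$ is paracompact Hausdorff, hence normal, and separates the two disjoint closed sets $f(\cS\setminus\cU)$ and $\cX$ to obtain $\cV$; you instead observe directly that a compact set in a Hausdorff space is closed and take $\cV=\cO\setminus f(\cS\setminus\cU)$. Your route is marginally more economical, using only Hausdorffness rather than normality and not requiring $\cX$ to be closed, while the paper's version yields a $\cV$ that is also disjoint from (a neighborhood of) $f(\cS\setminus\cU)$, which is not needed here.
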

\begin{proof}
	This follows from basic point-set topology.
	The set $\cS \setminus \cU$ is closed, hence compact; it follows that $f(\cS \setminus \cU)$ is also compact.
	Observe that $f(\cS \setminus \cU) \cap \cX = \emptyset$.
	
	The underlying topological space $\cO$ is by definition paracompact and Hausdorff; this implies that it is a normal topological space.  We may therefore separate the disjoint closed sets $f(\cS \setminus \cU)$ and $\cX$, which implies we can find an open neighborhood $\cV$ of $\cX$ such that $f(\cS \setminus \cU) \cap \cV = \emptyset$. The claim then follows from the fact that $(\cS \setminus \cU) \cap f^{-1} (\cV) = \emptyset$.
\end{proof}

\begin{lemma}\label{lem:transverse-intersection}
	Without loss of generality, we may assume that $f\pitchfork \cX$. Suppose that $\dim \cS + \dim \cX = \dim \cO$.
	\begin{enumerate}
		\item \label{lem:1} The set $f^{-1}(\cX) \subset \cS$ consists of isolated points. By compactness of $\cS$, it follows that $f^{-1}(\cX)$ is a finite set of points.
		\item \label{lem:2} Consider a point $[x] \in f^{-1}(\cX)$. Consider a local branching structure $(M_i)_{i\in I}$ (with respect to $\cS$) at a representative $x\in S$, contained in a $\bm{G}(x)$-invariant open neighborhood $U$.
		Then a given local branch $M_i$ is fixed only by the identity $\id \in \bm{G}^\text{eff}(x)$, i.e.,
			\[
			g * M_i \neq M_i \qquad \text{for all } g\in \bm{G}^\text{eff}(x), g\neq \id.
			\]
		Moreover, this implies that $\sharp \bm{G}^\text{eff}(x)$ divides $\sharp I$.
	\end{enumerate}
\end{lemma}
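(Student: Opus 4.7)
My plan for (1) is to exploit the dimension-matching hypothesis $\dim \cS + \dim \cX = \dim \cO$ to upgrade transversality into a local isomorphism. First I would fix a representative $x\in S$ of each $[x]\in f^{-1}(\cX)$ and a local branching structure $(M_i)_{i\in I}$ centered at $x$ inside a $\bm{G}(x)$-invariant uniformizer $U$. The transversality $f\pitchfork \cX$ gives $D\hat{f}_x(T_x M_i) + T_{\hat{f}(x)} X = T_{\hat{f}(x)} O$ for each $i$, and the dimension count forces this sum to be direct with $D\hat{f}_x|_{T_x M_i}$ injective; equivalently, the composition $\pi_N \circ D\hat{f}_x|_{T_x M_i} : T_x M_i \to N_{\hat{f}(x)}X$ with the normal projection is an isomorphism. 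Consequently $(\hat{f}|_{M_i})^{-1}(X)$ is $0$-dimensional near $x$, so $x$ is isolated in each branch's preimage. Since $S\cap U = \bigcup_{i \in I} M_i$ is a finite union, $[x]$ is isolated in $f^{-1}(\cX)$; compactness of $\cS$ then produces finiteness.

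For (2), I would first arrange the local branching structure so that $\bm{G}(x)$ acts on the index set $I$ by $\Phi(g)\cdot M_i = M_{g\cdot i}$, which is possible because morphism invariance of $\hat{\theta}$ guarantees $\Phi(g)(S\cap U) = S\cap U$. Suppose toward contradiction that some $g \in \bm{G}^{\text{eff}}(x)\setminus \{\id\}$ stabilizes an index $i$; then $D\Phi(g)_x$ preserves $T_x M_i$, and the key object is the restriction $A := D\Phi(g)_x|_{T_x M_i}$. Functoriality of $\hat{f}$ produces $h:=\hat{f}(g)\in \bm{G}(\hat{f}(x))$ satisfying the intertwining identity
\[
D\hat{f}_x \circ D\Phi(g)_x = D\Phi(h)_{\hat{f}(x)}\circ D\hat{f}_x.
\]
The fiberwise trivial isotropy assumption on $N\cX$ tells us $D\Phi(h)_{\hat{f}(x)}$ preserves $T_{\hat{f}(x)}X$ and descends to the identity on $N_{\hat{f}(x)}X$. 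Projecting to $N_{\hat{f}(x)}X$ and restricting to $T_x M_i$ yields
\[
\pi_N \circ D\hat{f}_x\big|_{T_x M_i}\circ A = \pi_N \circ D\hat{f}_x\big|_{T_x M_i},
\]
which combined with the isomorphism from (1) forces $A = \id$. A Bochner-type linearization of the finite-order diffeomorphism $\Phi(g)|_{M_i}$ at the fixed point $x$ then promotes this to $\Phi(g)|_{M_i}=\id$ in a neighborhood of $x$.

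It remains to argue that $\Phi(g)|_{M_i}=\id$ for $g\in\bm{G}^{\text{eff}}(x)\setminus\{\id\}$ is incompatible with the chosen equivariant indexing, which completes the freeness of the $\bm{G}^{\text{eff}}(x)$-action on $I$. An orbit--stabilizer partition of $I$ into free $\bm{G}^{\text{eff}}(x)$-orbits of size $\sharp\bm{G}^{\text{eff}}(x)$ then yields the divisibility $\sharp\bm{G}^{\text{eff}}(x)\mid\sharp I$. I expect the hardest part of the plan to be closing this last step cleanly: the tangent-space argument above directly rules out the ``faithful'' case $A\neq \id$, but the ``trivial-on-$T_x M_i$'' case requires invoking the convention, implicit in the HWZ definition via morphism invariance of $\hat{\theta}$, that any branch pointwise fixed by nontrivial effective isotropy should be replaced by the appropriate number of indexed copies with summed weights on which the effective action permutes freely.
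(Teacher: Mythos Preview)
Your argument for (\ref{lem:1}) is identical to the paper's: transversality plus the dimension count makes $\pr_{N_{\hat f(x)}X}\circ D\hat f_x|_{T_xM_i}$ an isomorphism, hence each branchwise preimage is $0$-dimensional near $x$, and finiteness of the branching plus compactness finishes.

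For (\ref{lem:2}) your core mechanism also matches the paper exactly: one uses the equivariance of $D\hat f_x$ together with the triviality of the isotropy action on $N_{\hat f(x)}X$ to conclude that the isomorphism $\pr_{N_{\hat f(x)}X}\circ D\hat f_x|_{T_xM_i}$ intertwines the $g$-action on $T_xM_i$ with the identity on $N_{\hat f(x)}X$, forcing $A=\id$. The paper stops precisely here and declares this a contradiction to $g\neq\id$ in $\bm{G}^{\text{eff}}(x)$; it does not invoke Bochner linearization, nor does it discuss any indexing convention. So your Bochner step and your final paragraph go beyond what the paper actually argues. In other words, the subtlety you flag---that $A=\id$ on $T_xM_i$ does not a priori force $g$ to act trivially on the ambient uniformizer $U$, hence does not immediately contradict effectiveness computed in $Z$---is real, and the paper's own proof glosses over it rather than resolving it. Your proposed fix via a re-indexing convention is reasonable but is not something the paper states; if you want to match the paper you can simply assert the contradiction at the point $A=\id$, and if you want to be more careful than the paper you should make the convention (or an alternative justification) explicit.
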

\begin{proof}[Proof of (\ref{lem:1})]
	Consider a point $[x] \in f^{-1}(\cX) \subset \cS$ and consider a local branching structure $(M_i)_{i\in I}$ (with respect to $\cS$) at a representative $x$.
	For every given local branch $M_i$, the dimension assumption and the fact that 
		\[
		D\hat{f}_x(T_x M_i) \oplus T_{\hat{f}(x)} X = T_{\hat{f}(x)} O
		\]
	together imply that $x \in M_i \cap \hat{f}^{-1}(X)$ is isolated, i.e., we can find open neighborhoods $x \in U_i \subset M_i$ such that $U_i \cap \hat{f}^{-1}(X) = \{x\}$.
	Since the topology on each branch $M_i$ is the same as the subspace topology induced from $Z$, we can find a $\bm{G}(x)$-invariant open set $U\subset Z$ such that $U\cap M_i \subset U_i$ for all $i\in I$, and which has the property that $U \cap (\cup M_i) \cap \hat{f}^{-1}(X) = \{x\}$.
	The set $U\cap (\cup M_i)$ is an open subset of the space $S$; since $\pi : S \to \cS, x \mapsto [x]$ is an open map it follows that $\abs{U\cap (\cup M_i) } \subset \cS$ is an open set.
	
	We claim that $\abs{U\cap (\cup M_i) } \cap f^{-1}(\cX) = \{[x]\}$.  To see this, suppose there were a point $[y] \in \abs{U\cap (\cup M_i) } \cap f^{-1}(\cX)$, $[y]\neq [x]$; we could then find objects $y_1 \in U\cap (\cup M_i)$, $y_1 \mapsto [y]$ and $y_2 \in \pi^{-1} (f^{-1}(\cX)) = \hat{f}^{-1}(X)$, $y_2 \mapsto [y]$. But since $\hat{f}^{-1}(X)$ is saturated it follows that $y_1 \in \hat{f}^{-1}(X)$, which contradicts the fact that $U \cap (\cup M_i) \cap \hat{f}^{-1}(X) = \{x\}$.
	
	We have shown that the point $[x] \in f^{-1}(\cX)$ is isolated; since $[x]$ was arbitrary, the proof is complete.
\end{proof}
\begin{proof}[Proof of (\ref{lem:2})]
	Let $y := \hat{f}(x)\in O$, and let $U(y)\subset O$ be a $\bm{G}(y)$-invariant open neighborhood. The functor $\hat{f}$ induces a group homomorphism between the isotropy groups, $\hat{f}: \bm{G}(x) \to \bm{G}(y)$ and an equivariant map
		\[
		\hat{f} : \cup_{i\in I} M_i \to U(y)
		\]
	with respect to these isotropy groups.
	The tangent planes $\cup_{i\in I} T_{x} M_i$ and $T_{y} O$ carry an induced action by the isotropy groups; the linearization 
		\[
		D\hat{f}_{x} : \cup_{i\in I} T_{x} M_i \to T_{y} O
		\]
	is equivariant with respect to this action.  The projection to the normal bundle
		$
		\pr_{N_{y}X} : T_{y} O \to N_{y}X
		$
	is also equivariant.  Recall that by assumption on the suborbifold $\cX$, the isotropy group $\bm{G}(y)$ acts trivially on the fiber $N_{y}X$.
	
	The composition
		\[
		\pr_{N_{y}X} \circ D\hat{f}_{x} : \cup_{i\in I} T_{x} M_i \to N_{y}X
		\]
	is equivariant.  The fact that $\dim M_i = \rank NX$ together with the transversality assumption implies that the map 
		\begin{equation}\label{eq:equivariant-map}
		\pr_{N_{y}X} \circ D\hat{f}_{x} |_{T_x M_i} : T_{x} M_i \to N_{y}X
		\end{equation}
	is an isomorphism. Consider the induced action on the tangent plan $T_x M_i$ by any $g\in \bm{G}^\text{eff} (x)$, $g\neq \id$ with $g\ast M_i = M_i$; the map \eqref{eq:equivariant-map} must be equivariant with respect to this action. However, since $\bm{G}(y)$ acts trivially on the fiber $N_{y}X$ this contradicts the fact that \eqref{eq:equivariant-map} is an isomorphism. Hence no such $g\in \bm{G}^\text{eff} (x)$, $g\neq \id$ exists, and the claim is proven.
\end{proof}

\begin{definition}\label{def:intersection-number}
	If $\dim \cS + \dim \cX = \dim \cO$ we define the \textbf{intersection number}
	$f|_{\cS} \cdot \cX$ by the equation
	\[
	f|_{\cS} \cdot \cX := \sum_{[x]\in f^{-1}(\cX)} \left(\frac{1}{\sharp \bm{G}^\text{eff}(x)} \sum_{\{ i\in I \mid x\in M_i \}} \sign (x;i) w_i \right)
	\]
	where $x$ is a representative of $[x]$, and $(M_i)_{i\in I}$ are local branches at $x$ with weights $(w_i)_{i\in I}$ and orientations $(o_i)_{i\in I}$. The sign $\sign(x;i)=\pm 1$ is positive if $D\hat{f}_x (T_x M_i) \oplus T_{\hat{f}(x)} X$ has the same orientation as $T_{\hat{f}(x)} O$ and negative if the orientations are opposite.
	
	If $\dim \cS + \dim \cX \neq \dim \cO$ we define the intersection number to be zero, 
		\[f|_{\cS} \cdot \cX := 0.\]
\end{definition}

By the Steenrod problem for orbifolds \ref{thm:intro-steenrod-problem}, there exists a basis $\{[\cX_i]\}$ of $H_*(\cO;\Q)$ consisting of the fundamental classes of embedded full suborbifolds $\cX_i \subset \cO$ whose normal bundles have fiberwise trivial isotropy action.

\begin{definition}\label{def:polyfold-invariant-intersection-number}
	Fix such a basis $\{[\cX_i]\}$ of $H_*(\cO;\Q)$.	
	We define the \textbf{polyfold invariant} as the homomorphism
		\[
		H_*(\cO; \Q) \to \Q,\qquad	\sum_i k_i [\cX_i] \mapsto \sum_i k_i \left( f|_{\cS} \cdot \cX_i \right)
		\]
	uniquely determined by evaluating the intersection number on the representing suborbifolds $\cX_i$ and linear extension.
\end{definition}

\subsection{Equivalence of the polyfold invariants}\label{subsec:equality-polyfold-invariants}

We now show that the polyfold invariants defined by the branched integral and by the intersection number are equivalent.
We also discuss the invariance of the intersection number.

\begin{proof}[Proof of Theorem~\ref{thm:equality-polyfold-invariants}]

We need to show that the branched integral the intersection number are related by the equation
	\[\int_{\cS} f^* \PD ([\cX]) = f|_{\cS} \cdot \cX.\]

When $\dim \cS + \dim \cX \neq \dim \cO$ the claim is clear, as both the intersection number and the branched integral are equal to zero.

Suppose that $\dim \cS + \dim \cX = \dim \cO$.
Let $n:= \dim \cS$.  By our assumptions, Lemma~\ref{lem:transverse-intersection} (\ref{lem:1}) implies that $f^{-1}(\cX)$ consists of a finite set of points, $\{[x_1],\ldots,[x_k]\}$.
For each point choose a representative $x_j \in S$ and let $U_j\subset Z$ be a $\bm{G}(x_j)$-invariant open neighborhood of $x_j$; it follows that $\cup_{j=1}^k \abs{U_j \cap S}$ is an open neighborhood of the points $\{[x_1],\ldots,[x_k]\}$.

By Proposition~\ref{prop:thom-class-poincare-dual} we may represent the de Rahm class $\PD([\cX]) \in H_{\dR}^n(\cO)$ as the Thom class of $\cX$.
We may assume that $\supp \PD([\cX]) \subset \cO$ is contained in an arbitrarily small neighborhood of $\cX$; hence by Lemma~\ref{lem:neighborhood} we may assume $\supp f^* \PD([\cX]) = \abs{\supp \hat{f}^* \PD ([\cX])} \subset \cup_{j=1}^k \abs{U_j \cap S}$.

By the definition of the branched integral, we may now write:
	\begin{align*}
	\int_{\cS} f^* \PD ([\cX])
		& = \sum_{j=1}^k \int_{\abs{U_j \cap S}} f^* \PD ([\cX]) \\
		& = \sum_{j=1}^k \left(\frac{1}{\sharp \bm{G}^\text{eff}(x_j)} \sum_{i\in I_j} w_i \int_{(M_i,o_i)} \hat{f}^* \PD ([\cX]) \right) \\
		& = \sum_{j=1}^k \left(\frac{1}{\sharp \bm{G}^\text{eff}(x_j)} \sum_{\{i\in I_j\mid x_j \in M_i \}} \sign (x_j; i) w_i \right) \\
		& = f|_{\cS} \cdot \cX.
	\end{align*}
The third equality follows by integrating the pullback of the Thom class on each branch, using the transversality assumption and the fact that $\PD([\cX])$ restricts to a generator of $H^n_{\dR,c}(N_{\hat{f}(x_j)} X)$ determined by the orientation of $NX$.
\end{proof}

\begin{remark}[Invariance of the intersection number]\label{rmk:invariance-intersection-number}
	We discuss the invariance of Definition~\ref{def:polyfold-invariant-intersection-number} from the choice of perturbation $p$ used to define a weighted branched suborbifold $\cS(p)$.
	
	To this end, consider a compact oriented weighted branched suborbifold $\cB$ with boundary $\partial \cB$.
	As usual, let $\cO$ be an oriented orbifold, let $\cX\subset \cO$ be a closed embedded full suborbifold whose normal bundle has fiberwise trivial isotropy action, and consider a smooth map $f: \cB \to \cO$.
	The arguments of Proposition~\ref{prop:transversality-perturbation-full-suborbifold} or Proposition~\ref{prop:transversality-regular-perturbation} show that without loss of generality we may assume
	\[
	f|_{\cB} \pitchfork \cX, \qquad f|_{\partial \cB} \pitchfork \cX.
	\]
	
	Suppose that $\dim \cB + \dim \cX = \dim \cO +1$. It can be shown that $f^{-1}(\cX)\subset \cB$ has the structure of a one-dimensional compact oriented weighted branched suborbifold with boundary $f^{-1}(\cX) \cap \partial \cB$.
	With this viewpoint, one necessarily expects the intersection number of $f$ restricted to the boundary to be zero,
	\[
	f|_{\partial \cB} \cdot \cX =0.
	\]
	Intuitively, this is because one should expect that a one-dimensional compact oriented weighted branched suborbifold can be decomposed into a (not necessarily disjoint) union of oriented weighted intervals $[0,1]$ and circles $S^1$. 
	From there, one would argue that the intersection number is equal to the signed weighted count of the boundary points of the intervals, and argue that this count is zero. 
	
	We do not pursue this approach, however.
	Instead, using Theorem~\ref{thm:equality-polyfold-invariants} we may observe that the homomorphism defined via the branched integral,
	\[
	H_*(\cO; \Q) \to \R, 
	\qquad \sum_i \alpha_i [\cX_i] \mapsto \sum_i \alpha_i \int_{\cS(p)} \hat{f}^* \PD([\cX_i]),
	\]
	and the homomorphism defined via the intersection number,
	\[
	H_*(\cO; \Q) \to \R,
	\qquad \sum_i \alpha_i [\cX_i] \mapsto \sum_i \alpha_i \left(	f|_{\cS(p)} \cdot \cX_i	\right),
	\]
	are identical.
	Since the first homomorphism does not depend on the choice of abstract perturbation nor on a choice of basis for $H_*(\cO;\Q)$, the same is true for the second homomorphism.
	
	We note that the second homomorphism is rationally valued and hence the branched integral is also rationally valued when evaluated on rational cohomology classes.
	This verifies a claim made in \cite[p.~12]{HWZGW}.
\end{remark}

\section*{Acknowledgment}
	This work is indebted to Katrin Wehrheim for their aid and insight in understanding the intricate machineries and untold depths of polyfold theory.

\begin{bibdiv}
\begin{biblist}

\bib{adem2007orbifolds}{book}{
	author={Adem, Alejandro},
	author={Leida, Johann},
	author={Ruan, Yongbin},
	title={Orbifolds and stringy topology},
	series={Cambridge Tracts in Mathematics},
	volume={171},
	publisher={Cambridge University Press, Cambridge},
	date={2007},
	pages={xii+149},
	isbn={978-0-521-87004-7},
	isbn={0-521-87004-6},
	review={\MR{2359514}},
	doi={10.1017/CBO9780511543081},
}

\bib{borzellino2012elementary}{article}{
	author={Borzellino, Joseph E.},
	author={Brunsden, Victor},
	title={Elementary orbifold differential topology},
	journal={Topology Appl.},
	volume={159},
	date={2012},
	number={17},
	pages={3583--3589},
	issn={0166-8641},
	review={\MR{2973378}},
	doi={10.1016/j.topol.2012.08.032},
}

\bib{chen2001orbifoldgromovwitten}{article}{
	author={Chen, Weimin},
	author={Ruan, Yongbin},
	title={Orbifold {G}romov--{W}itten theory},
	conference={
		title={Orbifolds in mathematics and physics},
		address={Madison, WI},
		date={2001},
	},
	book={
		series={Contemp. Math.},
		volume={310},
		publisher={Amer. Math. Soc., Providence, RI},
	},
	date={2002},
	pages={25--85},
	review={\MR{1950941}},
	doi={10.1090/conm/310/05398},
}

\bib{chen2004newcohomology}{article}{
	author={Chen, Weimin},
	author={Ruan, Yongbin},
	title={A new cohomology theory of orbifold},
	journal={Comm. Math. Phys.},
	volume={248},
	date={2004},
	number={1},
	pages={1--31},
	issn={0010-3616},
	review={\MR{2104605}},
	doi={10.1007/s00220-004-1089-4},
}

\bib{cho2013orbifold}{article}{
	author={Cho, Cheol-Hyun},
	author={Hong, Hansol},
	author={Shin, Hyung-Seok},
	title={On orbifold embeddings},
	journal={J. Korean Math. Soc.},
	volume={50},
	date={2013},
	number={6},
	pages={1369--1400},
	issn={0304-9914},
	review={\MR{3126596}},
	doi={10.4134/JKMS.2013.50.6.1369},
}

\bib{eilenberg1949problems}{article}{
	author={Eilenberg, Samuel},
	title={On the problems of topology},
	journal={Ann. of Math. (2)},
	volume={50},
	date={1949},
	pages={247--260},
	issn={0003-486X},
	review={\MR{0030189}},
	doi={10.2307/1969448},
}

\bib{eilenberg1952foundations}{book}{
	author={Eilenberg, Samuel},
	author={Steenrod, Norman},
	title={Foundations of algebraic topology},
	publisher={Princeton University Press, Princeton, New Jersey},
	date={1952},
	pages={xv+328},
	review={\MR{0050886}},
}

\bib{ffgw2016polyfoldsfirstandsecondlook}{article}{
	author={Fabert, Oliver},
	author={Fish, Joel W.},
	author={Golovko, Roman},
	author={Wehrheim, Katrin},
	title={Polyfolds: A first and second look},
	journal={EMS Surv. Math. Sci.},
	volume={3},
	date={2016},
	number={2},
	pages={131--208},
	issn={2308-2151},
	review={\MR{3576532}},
	doi={10.4171/EMSS/16},
}

\bib{haefliger1971homotopy}{article}{
	author={Haefliger, Andr\'{e}},
	title={Homotopy and integrability},
	conference={
		title={Manifolds--Amsterdam 1970 (Proc. Nuffic Summer School)},
	},
	book={
		series={Lecture Notes in Mathematics, Vol. 197},
		publisher={Springer, Berlin},
	},
	date={1971},
	pages={133--163},
	review={\MR{0285027}},
}

\bib{haefliger1984groupoide}{article}{
	author={Haefliger, Andr\'{e}},
	title={Groupo\"ides d'holonomie et classifiants},
	language={French},
	note={Transversal structure of foliations (Toulouse, 1982)},
	journal={Ast\'{e}risque},
	number={116},
	date={1984},
	pages={70--97},
	issn={0303-1179},
	review={\MR{755163}},
}

\bib{haefliger2001groupoids}{article}{
	author={Haefliger, Andr\'{e}},
	title={Groupoids and foliations},
	conference={
		title={Groupoids in analysis, geometry, and physics},
		address={Boulder, CO},
		date={1999},
	},
	book={
		series={Contemp. Math.},
		volume={282},
		publisher={Amer. Math. Soc., Providence, RI},
	},
	date={2001},
	pages={83--100},
	review={\MR{1855244}},
	doi={10.1090/conm/282/04680},
}

\bib{HWZ1}{article}{
	author={Hofer, H.},
	author={Wysocki, K.},
	author={Zehnder, E.},
	title={A general Fredholm theory. I. A splicing-based differential geometry},
	journal={J. Eur. Math. Soc. (JEMS)},
	volume={9},
	date={2007},
	number={4},
	pages={841--876},
	issn={1435-9855},
	review={\MR{2341834}},
	doi={10.4171/JEMS/99},
}

\bib{HWZ2}{article}{
	author={Hofer, H.},
	author={Wysocki, K.},
	author={Zehnder, E.},
	title={A general Fredholm theory. II. Implicit function theorems},
	journal={Geom. Funct. Anal.},
	volume={19},
	date={2009},
	number={1},
	pages={206--293},
	issn={1016-443X},
	review={\MR{2507223}},
	doi={10.1007/s00039-009-0715-x},
}

\bib{HWZ3}{article}{
	author={Hofer, H.},
	author={Wysocki, K.},
	author={Zehnder, E.},
	title={A general Fredholm theory. III. Fredholm functors and polyfolds},
	journal={Geom. Topol.},
	volume={13},
	date={2009},
	number={4},
	pages={2279--2387},
	issn={1465-3060},
	review={\MR{2515707}},
	doi={10.2140/gt.2009.13.2279},
}

\bib{HWZint}{article}{
	author={Hofer, H.},
	author={Wysocki, K.},
	author={Zehnder, E.},
	title={Integration theory on the zero sets of polyfold Fredholm sections},
	journal={Math. Ann.},
	volume={346},
	date={2010},
	number={1},
	pages={139--198},
	issn={0025-5831},
	review={\MR{2558891}},
	doi={10.1007/s00208-009-0393-x},
}

\bib{HWZsc}{article}{	
	author={Hofer, H.},
	author={Wysocki, K.},
	author={Zehnder, E.},
	title={sc-smoothness, retractions and new models for smooth spaces},
	journal={Discrete Contin. Dyn. Syst.},
	volume={28},
	date={2010},
	number={2},
	pages={665--788},
	issn={1078-0947},
	review={\MR{2644764}},
	doi={10.3934/dcds.2010.28.665},
}

\bib{HWZGW}{article}{
	author={Hofer, H.},
	author={Wysocki, K.},
	author={Zehnder, E.},
	title={Applications of polyfold theory I: The polyfolds of {G}romov--{W}itten theory},
	journal={Mem. Amer. Math. Soc.},
	volume={248},
	date={2017},
	number={1179},
	pages={v+218},
	issn={0065-9266},
	isbn={978-1-4704-2203-5},
	isbn={978-1-4704-4060-2},
	review={\MR{3683060}},
	doi={10.1090/memo/1179},
}

\bib{HWZbook}{article}{	
	author={Hofer, H.},
	author={Wysocki, K.},
	author={Zehnder, E.},
	title = {Polyfold and {F}redholm theory},
	journal = {arXiv e-prints},
	year = {2017},
	pages = {714},
	eprint = {arXiv:1707.08941},
}

\bib{lange2015underlying}{article}{
	AUTHOR = {Lange, Christian},
	TITLE = {When is the underlying space of an orbifold a manifold?},
	JOURNAL = {Trans. Amer. Math. Soc.},
	VOLUME = {372},
	YEAR = {2019},
	NUMBER = {4},
	PAGES = {2799--2828},
	ISSN = {0002-9947},
	DOI = {10.1090/tran/7687},
	URL = {https://doi.org/10.1090/tran/7687},
}

\bib{lee2012introduction}{book}{
	author={Lee, John M.},
	title={Introduction to smooth manifolds},
	series={Graduate Texts in Mathematics},
	volume={218},
	edition={2},
	publisher={Springer, New York},
	date={2013},
	pages={xvi+708},
	isbn={978-1-4419-9981-8},
	review={\MR{2954043}},
}

\bib{mcduffsalamon2012jholomorphic}{book}{
	AUTHOR = {McDuff, Dusa},
	AUTHOR = {Salamon, Dietmar},
	TITLE = {{$J$}-holomorphic curves and symplectic topology},
	SERIES = {American Mathematical Society Colloquium Publications},
	VOLUME = {52},
	EDITION = {Second},
	PUBLISHER = {American Mathematical Society, Providence, RI},
	YEAR = {2012},
	PAGES = {xiv+726},
	ISBN = {978-0-8218-8746-2},
}

\bib{moerdijk1999simplicial}{article}{
	author={Moerdijk, I.},
	author={Pronk, D. A.},
	title={Simplicial cohomology of orbifolds},
	journal={Indag. Math. (N.S.)},
	volume={10},
	date={1999},
	number={2},
	pages={269--293},
	issn={0019-3577},
	review={\MR{1816220}},
	doi={10.1016/S0019-3577(99)80021-4},
}

\bib{moerdijk2003introduction}{book}{
	author={Moerdijk, I.},
	author={Mr\v{c}un, J.},
	title={Introduction to foliations and Lie groupoids},
	series={Cambridge Studies in Advanced Mathematics},
	volume={91},
	publisher={Cambridge University Press, Cambridge},
	date={2003},
	pages={x+173},
	isbn={0-521-83197-0},
	review={\MR{2012261}},
	doi={10.1017/CBO9780511615450},
}

\bib{moerdijk2002orbifolds}{article}{
	author={Moerdijk, Ieke},
	title={Orbifolds as groupoids: an introduction},
	conference={
		title={Orbifolds in mathematics and physics},
		address={Madison, WI},
		date={2001},
	},
	book={
		series={Contemp. Math.},
		volume={310},
		publisher={Amer. Math. Soc., Providence, RI},
	},
	date={2002},
	pages={205--222},
	review={\MR{1950948}},
	doi={10.1090/conm/310/05405},
}

\bib{ruan1994symplectic}{article}{
	AUTHOR = {Ruan, Yongbin},
	TITLE = {Symplectic topology on algebraic {$3$}-folds},
	JOURNAL = {J. Differential Geom.},
	VOLUME = {39},
	YEAR = {1994},
	NUMBER = {1},
	PAGES = {215--227},
	ISSN = {0022-040X},
	URL = {http://projecteuclid.org/euclid.jdg/1214454682},
}

\bib{ruan1996topological}{article}{
	AUTHOR = {Ruan, Yongbin},
	TITLE = {Topological sigma model and {D}onaldson-type invariants in
		{G}romov theory},
	JOURNAL = {Duke Math. J.},
	VOLUME = {83},
	YEAR = {1996},
	NUMBER = {2},
	PAGES = {461--500},
	ISSN = {0012-7094},
	DOI = {10.1215/S0012-7094-96-08316-7},
	URL = {https://doi.org/10.1215/S0012-7094-96-08316-7},
}

\bib{satake1956generalization}{article}{
	author={Satake, I.},
	title={On a generalization of the notion of manifold},
	journal={Proc. Nat. Acad. Sci. U.S.A.},
	volume={42},
	date={1956},
	pages={359--363},
	issn={0027-8424},
	review={\MR{0079769}},
	doi={10.1073/pnas.42.6.359},
}

\bib{thom1954quelques}{article}{
	author={Thom, Ren\'{e}},
	title={Quelques propri\'{e}t\'{e}s globales des vari\'{e}t\'{e}s diff\'{e}rentiables},
	language={French},
	journal={Comment. Math. Helv.},
	volume={28},
	date={1954},
	pages={17--86},
	issn={0010-2571},
	review={\MR{0061823}},
	doi={10.1007/BF02566923},
}

\bib{zinger2008pseudocycles}{article}{
	AUTHOR = {Zinger, Aleksey},
	TITLE = {Pseudocycles and integral homology},
	JOURNAL = {Trans. Amer. Math. Soc.},
	VOLUME = {360},
	YEAR = {2008},
	NUMBER = {5},
	PAGES = {2741--2765},
	ISSN = {0002-9947},
	DOI = {10.1090/S0002-9947-07-04440-6},
	URL = {https://doi.org/10.1090/S0002-9947-07-04440-6},
}

\end{biblist}
\end{bibdiv}


\end{document}